\newtheorem{thm}{Theorem}[section]
\newtheorem{lem}[thm]{Lemma}
\newtheorem{cor}[thm]{Corollary}
\newtheorem{prop}[thm]{Proposition}
\newtheorem{rmk}[thm]{Remark}
\newtheorem{eg}[thm]{Example}
\newcommand{\Fn}{\mathbb{F}_{q^n}}
\newcommand{\F}{\mathbb{F}_{q}}
\newcommand{\Ln}{\mathscr{L}_n(\Fn)}
\DeclareMathOperator{\id}{id}
\DeclareMathOperator{\im}{im}
\DeclareMathOperator{\GL}{GL}
\begin{document}
 
\title{On the inverses of some classes of permutations of finite fields}
 
\author{Aleksandr Tuxanidy and Qiang Wang}

\address{School of Mathematics and Statistics, Carleton
University,
 1125 Colonel By Drive, Ottawa, Ontario, K1S 5B6,
Canada.} 

\email{AleksandrTuxanidyTor@cmail.carleton.ca, wang@math.carleton.ca}

\keywords{Permutation polynomials, compositional inverse, linearized polynomials, Dickson matrix, finite fields.\\}
 
\thanks{The research of Aleksandr Tuxanidy and Qiang Wang is partially supported by OGS and NSERC, respectively, of Canada.}
 
 \maketitle
 
 \begin{abstract}
We study the compositional inverses of some general classes of permutation polynomials over finite fields. 
 We show that we can write these inverses in terms of the inverses of two other polynomials bijecting subspaces of the finite field, where
 one of these is a linearized polynomial. In some cases we are able to explicitly obtain these inverses, thus obtaining the compositional inverse of the permutation in question.
 In addition we show how to compute a linearized polynomial inducing the inverse map over subspaces on which a prescribed linearized polynomial induces a bijection.
 We also obtain the explicit compositional inverses of two classes of permutation polynomials generalizing those whose compositional inverses were recently obtained in 
 \cite{wu_new} and \cite{wu}, respectively.
\end{abstract}

 \section{Introduction}
 Let $q = p^m$ be the power of a prime number $p$, let $\F$ be a finite field with $q$ elements, and let $\F[x]$ be the ring of polynomials over $\F$. 
 We denote the composition of two polynomials $f,g \in \F[x]$ by $(f \circ g) (x) := f(g(x))$. We call $f \in \F[x]$ a {\em permutation polynomial} of $\F$ 
 if it induces a permutation of $\F$. Note that since $x^q = x$ for all $x \in \F$, one only needs to consider polynomials of degree less than $q$.
 It is clear that permutation polynomials form a group under composition and reduction modulo $x^q - x$ that is isomorphic to the symmetric group on $q$ letters. Thus for any permutation polynomial $f \in \F[x]$,
 there exists a unique $f^{-1} \in \F[x]$ such that $f^{-1}(f(x)) \equiv f(f^{-1}(x)) \equiv x \pmod{x^q - x}$. Here $f^{-1}$ is defined as the {\em compositional inverse} of $f$,
 although we may simply call it sometimes the {\em inverse} of $f$ on $\F$. When we think of $x \in \F$ 
 as fixed and view $f \in \F[x]$ as a mapping of $\F$, we call $f^{-1}(x)$ the {\em preimage} of $x$ under $f$.
 
 The construction of permutation polynomials over finite fields is an old and difficult subject that continues to attract interest due to their 
 applications in cryptography \cite{rivest}, \cite{schwenk},
 coding theory \cite{ding13}, \cite{chapuy}, and combinatorics \cite{ding}. 
 See also \cite{akbary0}, \cite{akbary}, \cite{akbary2}, \cite{charpin}, \cite{coulter}, \cite{fernando}, \cite{hou}, 
 \cite{kyureghyan}, \cite{marcos}, \cite{mullen}, \cite{wang13}, \cite{yuan}, \cite{zha}, \cite{zieve}, and the references therein for some recent work in the area. However, the problem of 
 determining the compositional inverse of a permutation polynomial seems to be an even more complicated problem. 
 In fact, there are very few known permutation polynomials whose
 explicit compositional inverses have been obtained, and the resulting expressions are usually of a complicated nature except for the classes of the 
 permutation linear polynomials, 
 monomials, Dickson
 polynomials. In addition, see \cite{muratovic} and \cite{wang} for the characterization of the inverse of permutations of $\F$ 
 with form $x^rf(x^s)$ where $s \mid (q-1)$. The following list of permutation polynomials, whose compositional inverses have been found more recently, was given in \cite{wu}.
 Denote by $T_{q^n|q}$ the 
 {\em trace} (linearized) polynomial defined by $T_{q^n|q}(x) = \sum_{i=0}^{n-1}x^{q^i}$, which induces a surjection of $\Fn$ onto $\F$. When it will not cause confusion, we abbreviate this with $T$.
 
 (1) Linearized polynomials over $\Fn$, also called {\em $q$-polynomials}, which are $\F$-linear maps when seen as transformations of $\Fn$. These are the polynomials with form $L(x) := \sum_{i=0}^{n-1} a_i x^{q^i}$. 
 Their compositional inverse was given in Theorem 4.8, \cite{wu_lin}, in terms of cofactors of elements in the first column of the associate {\em Dickson matrix} given by
  $$
  D_L =  \left( \begin{array}{cccc}
                 a_0 & a_1 &\cdots & a_{n-1}\\
                 a_{n-1}^q & a_0^q & \cdots & a_{n-2}^q\\
                 \vdots & \vdots & & \vdots\\
                 a_1^{q^{n-1}} & a_2^{q^{n-1}} & \cdots & a_0^{q^{n-1}}
                \end{array} \right). 
$$
 This result was recently employed in \cite{wu_new} to explicitly obtain the compositional inverse of the class of linearized permutation polynomials
 $x^2 + x + T_{2^n|2}(x/a)$ over $\mathbb{F}_{2^n}$, $n$ odd, where $a \in \mathbb{F}_{2^n}^*$ is such that $T_{2^n|2}(a^{-1}) = 1$.
 
 Note that $L$ is a permutation polynomial of $\Fn$ if and only if $D_L$ is non-singular \cite{lidl}.
 In fact, the algebra of the $q$-polynomials over $\Fn$, denoted $\Ln$, is isomorphic to the algebras of the $n \times n$ Dickson matrices over $\Fn$, 
 and $n \times n$ matrices over $\F$ \cite{wu_lin}. 
 
 (2) The bilinear polynomial $x(T_{q^n|q}(x) + ax)$ over $\Fn$ where $q$ is even, $n$ is odd, and $a \in \F \setminus\{0,1 \}$. Its compositional inverse was obtained in \cite{coulter1}. 
 
 (3) The bilinear polynomial $f(x) = x (L(T_{q^n|q}(x)) + a T_{q^n|q}(x) + a x )$ where $q$ is even, $n$ is odd, $L \in \F[x]$ is a $2$-polynomial, and $a \in \F^{*}$. 
 Its compositional inverse was given
 in \cite{wu} in terms of the inverse of $xL(x)$ when restricted to $\F$. 
 
 Before we delve any further, we fix some notations. If we view $f \in \F[x]$ as a map of $\F$ and are given a subspace $V$ of $\F$, 
 we denote by $f|_V$ the map obtained by restricting $f$ to $V$. If $f$ is a bijection
 from $V$ to a subspace $W$ of $\F$, we mean by $f^{-1}|_{W}$ the inverse map of $f|_V$. 
 When the context is clear we may however mean by $f^{-1}|_{W}$ a polynomial in $\F[x]$ inducing the
 inverse map of $f|_V$. In this case we call $f^{-1}|_W$ the {\em compositional inverse of $f$ over $V$}.
 In both of the following cases, if $f(x)$ is viewed as an element of $\F$, or a polynomial in $\F[x]$, we denote $1/f(x) := f(x)^{q-2}$. Thus for instance $1/0 = 0$ with our notation.
 
 The main method used in \cite{wu} to obtain the compositional inverse of the permutation in (3) was
 to decompose the finite field $\Fn$ into the direct sum $\F \oplus \ker(T)$ thereby converting the 
 problem of computing the inverse of $f$ into the problem of computing the inverse of a bivariate function 
 permuting $\F \oplus \ker(T)$. This in turn is equivalent to obtaining the inverses of 
 two permutation polynomials, $\bar{f}, \varphi$, over the subspaces $\F$ and $\ker(T)$, respectively, where $\varphi$ is
 a $2$-polynomial (in fact, quadratic). As $q$ is even and $n$ is odd, $T$ is idempotent, i.e., $T \circ T = T$,
 and thus the (bijective) transformation $\phi: \Fn \rightarrow \F \oplus \ker(T)$ can be defined by
 $\phi(x) = (T(x), x - T(x))$, since $T(\Fn) = \F$ additionally. Because computing the inverse of any $p$-polynomial permutation
 on a subspace of $\Fn$ amounts to solving a system of linear equations, this problem is thus finally reduced to 
 computing the inverse of $\bar{f}|_{\F}$. 
 
 In this paper we extend this useful method to other more general classes of permutation polynomials
 given in \cite{akbary}, and \cite{yuan}, particularly written in terms of arbitrary linearized polynomials $\psi$ (instead of just $T$). 
 We write their inverses 
 in terms of the inverses of two other polynomials, one of these being linearized, over subspaces. In some special cases we can determine these inverses, thus obtaining
 the full result. However, given that we may not always have a ``nice enough'' expression for $\ker(\psi)$,
 we instead use the map $\phi_\psi : \Fn \rightarrow \psi(\Fn) \oplus S_\psi$, where 
 $S_{\psi} := \{x - \psi(x) \mid x \in \Fn  \}$ -- a subspace of $\Fn$, in similarity with $\ker(T)$ as above -- defined by
 $\phi_\psi(x) = (\psi(x), x - \psi(x))$.
  In the case that $\psi$ induces an idempotent map of $\Fn$, $S_\psi = \ker(\psi)$. 
 As $\phi_\psi$ is injective for any such $\psi$ (but not surjective in the case that
 $\psi$ is not idempotent), we similarly transform the problem of computing the inverses of 
 permutation polynomials into the problem of computing the inverses of two other bijections,
 $\bar{f}, \varphi$, over the subspaces $\psi(\Fn)$ and $S_\psi$, respectively. 
 
 Many of our resulting expressions for compositional inverses, or preimages in some cases, 
 are particularly written in terms of linearized polynomials inducing the inverse map over subspaces on which prescribed 
 linearized polynomials induce a bijection. Nonetheless, in Theorem \ref{thm:LinearizedInverse} we show how to obtain such compositional inverses on subspaces. 
 As we show there, obtaining the coefficients
 of such linearized polynomials is equivalent to solving a system of linear equations. 
 However in order to set up such a system one is required to first obtain a linearized polynomial, $K \in \Ln$, inducing an idempotent map with a prescribed kernel (we can always do this).
 But clearly, if we are given an idempotent $\psi \in \Ln$, i.e., $S_\psi = \ker(\psi)$, and $S_\psi$ is such a prescribed kernel, then one can simply let $K = \psi$.
 In the simultaneous case that the characteristic $p$ does not divide $n$, the coefficients of $\varphi \in \Ln$ belong to $\F$ (i.e., the associate Dickson matrix is {\em circulant}), and $\varphi$ induces a bijection between two $\F$-subspaces, we can 
 quickly solve the corresponding linear system by using the Circular Convolution Theorem, for the Discrete Fourier Transform (DFT), together with a
 Fast Fourier Transform (FFT). See \cite{jones} for details regarding circulants and their close relation to the DFT.
 
 As an example of our results, we obtain in Theorem \ref{inv0} the compositional inverse of the permutation $f$
 in the following theorem, under the assumption that $\varphi$ bijects $S_\psi$, and written in terms of the inverses of $\bar{f}|_{\psi(\Fn)}$ and $\varphi|_{S_\psi}$.
 
 \begin{thm}[{\bf Theorem 5.1, \cite{akbary}}]\label{thm0}
  Consider any polynomial $g \in \Fn[x]$, any additive polynomials $\varphi,\psi \in \Fn[x]$, any $q$-polynomial $\bar{\psi} \in \Fn[x]$ satisfying $\varphi \circ \psi = \bar{\psi} \circ \varphi$
 and $|\psi(\Fn)| = |\bar{\psi}(\Fn)|$, and any polynomial $h \in \Fn[x]$ such that $h(\psi(\Fn)) \subseteq \F \setminus \{0\}$. Then
 $$
f(x) = h(\psi(x))\varphi(x) + g(\psi(x))
$$
permutes $\mathbb{F}_{q^n}$ if and only if

(i) $\ker (\varphi) \cap \ker (\psi) = \{0\}$; and

(ii) $\bar{f}(x) := h(x) \varphi(x) + \bar{\psi}(g(x))$ is a bijection from $\psi(\mathbb{F}_{q^n})$ to $\bar{\psi}(\Fn)$.
 \end{thm}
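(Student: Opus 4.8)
The plan is to realize $f$ through a commutative square of the Akbary--Ghioca--Wang type and then to translate the permutation property of $f$ into a statement about the fibres of $\psi$ together with a statement about $\bar f$. The pivotal first step is to verify the intertwining identity
$$
\bar f \circ \psi = \bar\psi \circ f \qquad \text{on } \Fn.
$$
To see this I would expand $\bar f(\psi(x)) = h(\psi(x))\,\varphi(\psi(x)) + \bar\psi\big(g(\psi(x))\big)$, replace $\varphi(\psi(x))$ by $\bar\psi(\varphi(x))$ using the hypothesis $\varphi\circ\psi=\bar\psi\circ\varphi$, and then pull the scalar $h(\psi(x))$ inside $\bar\psi$. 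This last move is exactly where the two standing assumptions on $h$ and $\bar\psi$ are indispensable: since $h(\psi(x))\in\F$ and $\bar\psi$ is a $q$-polynomial (hence $\F$-linear), $h(\psi(x))\,\bar\psi(\varphi(x)) = \bar\psi\big(h(\psi(x))\,\varphi(x)\big)$, and collecting terms yields $\bar\psi(f(x))$. I expect this scalar-transfer step to be the main subtlety; everything downstream is bookkeeping.

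Next I would analyse $f$ on a single fibre. Fix $s\in\psi(\Fn)$ and pick $x_0$ with $\psi(x_0)=s$, so that $\psi^{-1}(s)=x_0+\ker\psi$. On this coset $\psi$ is constant, whence $f$ restricts to the affine map $x\mapsto h(s)\varphi(x)+g(s)$ with $h(s)\in\F^*$; as scaling by a nonzero constant and translation are bijective, $f$ is injective on $\psi^{-1}(s)$ if and only if $\varphi$ is injective on $x_0+\ker\psi$, i.e.\ on $\ker\psi$. Hence $f$ is injective on every fibre of $\psi$ if and only if $\ker\varphi\cap\ker\psi=\{0\}$, which is precisely condition (i).

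Finally I would assemble both directions. For the forward implication, assuming $f$ permutes $\Fn$, the identity gives $\bar f(\psi(\Fn))=\bar\psi(f(\Fn))=\bar\psi(\Fn)$, so $\bar f$ maps $\psi(\Fn)$ onto $\bar\psi(\Fn)$; by the cardinality hypothesis $|\psi(\Fn)|=|\bar\psi(\Fn)|$ this surjection is a bijection, establishing (ii), while the injectivity of $f$ forces fibrewise injectivity and hence (i) by the previous paragraph. For the converse, assuming (i) and (ii), suppose $f(x_1)=f(x_2)$; applying $\bar\psi$ and using the identity gives $\bar f(\psi(x_1))=\bar f(\psi(x_2))$, and since $\bar f$ is injective on $\psi(\Fn)$ we get $\psi(x_1)=\psi(x_2)$, so $x_1,x_2$ lie in one fibre of $\psi$. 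Condition (i) then makes $f$ injective on that fibre, forcing $x_1=x_2$; thus $f$ is injective on the finite set $\Fn$ and therefore a permutation. This is the AGW criterion specialized to the present maps, and the only genuine work lies in the intertwining identity above.
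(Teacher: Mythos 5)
Your proposal is correct and takes essentially the same route as the paper: Theorem \ref{thm0} is imported from Theorem 5.1 of \cite{akbary}, whose proof is exactly this application of the AGW criterion (the paper's Lemma \ref{i1}) with $\lambda = \psi$, $\bar{\lambda} = \bar{\psi}$, once one verifies the intertwining identity $\bar{\psi} \circ f = \bar{f} \circ \psi$ and reduces fibrewise injectivity to $\ker(\varphi) \cap \ker(\psi) = \{0\}$. Your scalar-transfer step, using $h(\psi(x)) \in \F$ together with the $\F$-linearity of the $q$-polynomial $\bar{\psi}$, is precisely the key observation the hypotheses on $h$ and $\bar{\psi}$ are designed to enable.
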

 
 \begin{thm}\label{inv0}
  Using the same notations and assumptions of Theorem \ref{thm0}, assume that $f$ is a permutation of $\Fn$, and further assume
  that $|S_\psi| = |S_{\bar{\psi}}|$ and $\ker(\varphi) \cap \psi(S_\psi) = \{0\}$. Then $\varphi$ induces a bijection from $S_\psi$ to $S_{\bar{\psi}}$. Let $\bar{f}^{-1}, \varphi^{-1}|_{S_{\bar{\psi}}} \in \Fn[x]$
  induce the inverses of $\bar{f}|_{\psi(\Fn)}$ and $\varphi|_{S_{\psi}}$, respectively. Then the compositional inverse of $f$ on $\Fn$ is given by
 $$
  f^{-1}(x) = \bar{f}^{-1}\left(\bar{\psi}(x)\right) + \varphi^{-1}|_{S_{\bar{\psi}}}   \left( \dfrac{x - \bar{\psi}(x) - g\left(\bar{f}^{-1}\left(\bar{\psi}(x)\right)\right) + \bar{\psi}\left(g\left(\bar{f}^{-1}\left(\bar{\psi}(x)\right)\right)\right) }     {h\left(\bar{f}^{-1}\left(\bar{\psi}(x)\right)\right)}   \right).
  $$
  Furthermore, if $\varphi$ induces a bijection from $\psi(\Fn)$ to $\bar{\psi}(\Fn)$, then $\varphi$ is a permutation of $\Fn$ and the compositional inverse of $f$ on $\Fn$ is given by 
  $$
  f^{-1}(x) = \varphi^{-1}\left( \dfrac{x - g\left( \bar{f}^{-1}(\bar{\psi}(x)) \right)}{h\left( \bar{f}^{-1}(\bar{\psi}(x))  \right)}   \right).
  $$
 \end{thm}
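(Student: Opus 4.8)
The plan is to follow the decomposition strategy described in the introduction: use the injective map $\phi_\psi(x) = (\psi(x), x - \psi(x))$ to split each $x \in \Fn$ into its component $\psi(x) \in \psi(\Fn)$ and its component $x - \psi(x) \in S_\psi$, recover each component separately from the value $y = f(x)$, and then recombine. Accordingly I would first dispatch the preliminary claim that $\varphi$ bijects $S_\psi$ onto $S_{\bar{\psi}}$, and then invert $f$ componentwise.

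To prove the bijection, I would begin with the inclusion $\varphi(S_\psi) \subseteq S_{\bar{\psi}}$: for $x \in \Fn$ the intertwining relation $\varphi \circ \psi = \bar{\psi} \circ \varphi$ gives
$$
\varphi(x - \psi(x)) = \varphi(x) - \bar{\psi}(\varphi(x)),
$$
which is of the form $w - \bar{\psi}(w)$ with $w = \varphi(x)$, hence lies in $S_{\bar{\psi}}$. The step I expect to be the main obstacle is the injectivity of $\varphi$ on $S_\psi$, since this is where the hypotheses must be combined in a non-obvious way. Given $s \in \ker(\varphi) \cap S_\psi$, I would apply $\psi$ and invoke the intertwining relation to obtain $\varphi(\psi(s)) = \bar{\psi}(\varphi(s)) = 0$; thus $\psi(s) \in \ker(\varphi) \cap \psi(S_\psi) = \{0\}$ by hypothesis, so $s \in \ker(\psi)$, and then condition (i) of Theorem \ref{thm0} forces $s = 0$. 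Injectivity together with the cardinality hypothesis $|S_\psi| = |S_{\bar{\psi}}|$ then promotes the inclusion $\varphi(S_\psi) \subseteq S_{\bar{\psi}}$ to the asserted bijection.

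With the bijection secured, I would recover the two components of $x$ from $y = f(x)$. The key identity is $\bar{\psi}(f(x)) = \bar{f}(\psi(x))$: applying the $q$-polynomial $\bar{\psi}$ to $f(x) = h(\psi(x))\varphi(x) + g(\psi(x))$, using the $\F$-linearity of $\bar{\psi}$ (valid because $h(\psi(x)) \in \F$) together with $\bar{\psi}(\varphi(x)) = \varphi(\psi(x))$, reproduces $\bar{f}(\psi(x))$. Since $\bar{f}$ bijects $\psi(\Fn)$ onto $\bar{\psi}(\Fn)$ by condition (ii), this recovers the first component as $\psi(x) = \bar{f}^{-1}(\bar{\psi}(y))$. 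Writing $u = \bar{f}^{-1}(\bar{\psi}(y))$ and solving the defining equation of $f$ for $\varphi(x)$ yields $\varphi(x) = (y - g(u))/h(u) =: w$, whence $\varphi(x - \psi(x)) = w - \bar{\psi}(w) \in S_{\bar{\psi}}$ and so the second component is $x - \psi(x) = \varphi^{-1}|_{S_{\bar{\psi}}}(w - \bar{\psi}(w))$. Recombining $x = \psi(x) + (x - \psi(x))$ and expanding $w - \bar{\psi}(w)$ through the $\F$-linearity of $\bar{\psi}$ produces exactly the first stated formula; a short check that the argument of $\varphi^{-1}|_{S_{\bar{\psi}}}$ always lands in the $\F$-subspace $S_{\bar{\psi}}$ confirms it is well defined for every $y$, and since $f$ is a bijection this left inverse is the compositional inverse.

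For the final assertion I would first show that $\varphi$ is a permutation of $\Fn$ by a surjectivity argument. Since $\Fn = \psi(\Fn) + S_\psi$ and $\varphi$ is additive,
$$
\varphi(\Fn) = \varphi(\psi(\Fn)) + \varphi(S_\psi) = \bar{\psi}(\Fn) + S_{\bar{\psi}} = \Fn,
$$
using the new hypothesis that $\varphi$ bijects $\psi(\Fn)$ onto $\bar{\psi}(\Fn)$, the bijection on $S_\psi$ just proved, and the decomposition $\Fn = \bar{\psi}(\Fn) + S_{\bar{\psi}}$. Surjectivity on the finite set $\Fn$ gives bijectivity, so $\varphi^{-1}$ is globally defined. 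The second formula is then immediate by applying $\varphi^{-1}$ to $\varphi(x) = (y - g(u))/h(u)$ with $u = \bar{f}^{-1}(\bar{\psi}(y))$; it is consistent with the first formula because here $\varphi^{-1}|_{S_{\bar{\psi}}}$ is merely the restriction of $\varphi^{-1}$.
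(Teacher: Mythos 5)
Your proof is correct, and while it follows the same overall decomposition strategy as the paper --- recover the component $\psi(x)$ as $\bar{f}^{-1}(\bar{\psi}(f(x)))$ and the complementary component $x - \psi(x)$ via the inverse of $\varphi|_{S_\psi}$ --- you replace the paper's lemma machinery with direct arguments at three points, twice to genuine advantage. First, where the paper invokes its Lemma \ref{i2} (itself proved through an AGW-type criterion, Lemma \ref{i1}, with $A = S_\psi$, $\lambda = \psi$, and so on) to get the bijection from $S_\psi$ to $S_{\bar{\psi}}$, you give a short self-contained kernel computation: $s \in \ker(\varphi) \cap S_\psi$ forces $\psi(s) \in \ker(\varphi) \cap \psi(S_\psi) = \{0\}$, hence $s \in \ker(\varphi) \cap \ker(\psi) = \{0\}$ by condition (i) of Theorem \ref{thm0}; this is exactly the content of Lemma \ref{i2} unpacked. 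Second, for the final assertion the paper deduces that $\varphi$ permutes $\Fn$ from Lemma \ref{corlem}, whereas you get surjectivity directly from $\varphi(\Fn) = \varphi(\psi(\Fn)) + \varphi(S_\psi) = \bar{\psi}(\Fn) + S_{\bar{\psi}} = \Fn$, which is equally valid and more transparent. Third, and most notably, the paper establishes the second formula by verifying the identity $\bar{f}^{-1}(\bar{y}) = \varphi^{-1}\bigl((\bar{y} - \bar{\psi}(g(\bar{f}^{-1}(\bar{y}))))/h(\bar{f}^{-1}(\bar{y}))\bigr)$ through an explicit computation of $\bar{f}(\bar{f}^{-1}(\bar{y}))$ and then recombining it with the first formula using the additivity of $\varphi^{-1}$; you bypass all of this by observing that once $\varphi$ is a global permutation, applying $\varphi^{-1}$ to the already-derived relation $\varphi(x) = (f(x) - g(u))/h(u)$, with $u = \bar{f}^{-1}(\bar{\psi}(f(x)))$, yields the formula in one line. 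The only bookkeeping worth spelling out in a final write-up is the $\F$-linearity step you gesture at: since $1/h(u) = h(u)^{q-2} \in \F$ and $\bar{\psi}$ is a $q$-polynomial, $\bar{\psi}(w) = (\bar{\psi}(y) - \bar{\psi}(g(u)))/h(u)$, which is precisely what converts $w - \bar{\psi}(w)$ into the displayed argument of $\varphi^{-1}|_{S_{\bar{\psi}}}$; this goes through without difficulty.
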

 
 Note that condition (i) and the fact that the images of $\psi$, $\bar{\psi}$, are equally sized in Theorem \ref{thm0} imply that $\varphi$ is a bijection from $\ker(\psi)$
 to $\ker(\bar{\psi})$, for instance not necessarily satisfying $\ker(\varphi) \cap S_{\psi} = \{0\}$ (even though $\ker(\psi) \subseteq S_\psi$) required 
 to attain injectivity on $S_\psi$, which is an imposed hypothesis of Theorem \ref{inv0}. Thus our result is further restricted by our imposed assumption that
 $\varphi$ bijects $S_\psi$, a superset of $\ker(\psi)$. 
 However, as previously described, in the case that $\psi$ is idempotent or $\varphi$ is a permutation
 of $\Fn$ (see (1) above), we can get the inverse of $\varphi$ on $S_\psi$, and hence obtain
 the compositional inverse of $f$ in terms of a polynomial inducing the inverse map of $\bar{f}|_{\psi(\Fn)}$. Note that one may find several linearized polynomials
 inducing idempotent endomorphisms of $\Fn$. See Remark \ref{rmk:num_idempotent} for more details regarding this.
 
 As a consequence of this result we obtain several corollaries, like the following,
 in terms of the inverse of $\bar{f}|_{\psi(\Fn)}$ and the inverse of a linearized permutation polynomial of $\Fn$, which is already known (see (1) above). 
 
 \begin{rmk}
 To clarify an ambiguity in the presentation of our results: In many of the following results we make citations in the style of ``See Theorem `x'...''
 or ``See Corollary `x'...''
 to refer to a result in another paper where the construction of the considered
  permutation polynomial was obtained. However the compositional inverses given here in the statements of these results are ours. 
 \end{rmk}

 \begin{cor}[See Theorem 5.1 (c), \cite{akbary}]\label{zero}
  Consider $q$-polynomials $\varphi,\psi, \bar{\psi} \in \Fn[x]$ satisfying $\varphi \circ \psi = \bar{\psi} \circ \varphi$
 and $|\psi(\Fn)| = |\bar{\psi}(\Fn)|$. Let $g, h \in \Fn[x]$ be such that $(\bar{\psi} \circ g)|_{\psi(\Fn)} = 0$ and $h(\psi(\Fn)) \subseteq \F \setminus \{0\}$. 
 If
 $$
f(x) = h(\psi(x))\varphi(x) + g(\psi(x))
$$
permutes $\mathbb{F}_{q^n}$, then $\varphi$ permutes $\Fn$ as well, and the inverse of $f$ on $\Fn$ is given by
$$
f^{-1}(x) = \dfrac{\varphi^{-1}\left(x - g\left( \dfrac{\varphi^{-1}\left(\bar{\psi}(x)  \right)  }{h\left( \bar{f}^{-1}(\bar{\psi}(x))\right)}  \right)    \right)}{h\left( \bar{f}^{-1}(\bar{\psi}(x))\right)},
$$
where $\bar{f}(x) := h(x)\varphi(x)$ induces a bijection from $\psi(\Fn)$ to $\bar{\psi}(\Fn)$.
In particular, if $h(\psi(\Fn)) = \{c\}$ for some $c \in \F \setminus\{0\}$, then $f$ permutes $\Fn$ if and only if $\varphi$ permutes $\Fn$, in which case the compositional inverse of $f$
over $\Fn$ is given by
$$
f^{-1}(x) = c^{-1} \varphi^{-1}\left(x - g\left( c^{-1} \varphi^{-1}\left(\bar{\psi}(x)  \right)  \right) \right).
$$ 
 \end{cor}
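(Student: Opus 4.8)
The plan is to deduce this from Theorem \ref{inv0} by showing that, in the all-linearized setting, the two extra hypotheses of that theorem hold automatically and, crucially, that $\varphi$ is a full permutation of $\Fn$; the stated formula then follows by exploiting the $\F$-linearity of $\varphi$ and of its inverse. First I would invoke Theorem \ref{thm0}: since $f$ permutes $\Fn$, both (i) $\ker(\varphi)\cap\ker(\psi)=\{0\}$ and (ii) $\bar{f}(x)=h(x)\varphi(x)+\bar{\psi}(g(x))$ bijects $\psi(\Fn)$ onto $\bar{\psi}(\Fn)$ hold. The hypothesis $(\bar{\psi}\circ g)|_{\psi(\Fn)}=0$ then collapses $\bar{f}$ on $\psi(\Fn)$: for $y\in\psi(\Fn)$ one has $\bar{\psi}(g(y))=0$, so $\bar{f}(y)=h(y)\varphi(y)$, which justifies the parenthetical assertion that $\bar{f}(x):=h(x)\varphi(x)$ bijects $\psi(\Fn)$ onto $\bar{\psi}(\Fn)$.

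The main step — and the place I expect the real (though short) work to be — is to show that $\varphi$ is a permutation of $\Fn$. I would first observe that $\varphi$ bijects $\psi(\Fn)$ onto $\bar{\psi}(\Fn)$: the relation $\varphi\circ\psi=\bar{\psi}\circ\varphi$ gives $\varphi(\psi(\Fn))=\bar{\psi}(\varphi(\Fn))\subseteq\bar{\psi}(\Fn)$, while injectivity on $\psi(\Fn)$ follows because $\varphi(y)=0$ with $y\in\psi(\Fn)$ forces $\bar{f}(y)=h(y)\varphi(y)=0=\bar{f}(0)$, whence $y=0$ by injectivity of $\bar{f}$ on $\psi(\Fn)$; the equality $|\psi(\Fn)|=|\bar{\psi}(\Fn)|$ upgrades this injection to a bijection. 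Then I would show $\ker(\varphi)=\{0\}$: for $v\in\ker(\varphi)$, the relation $\varphi(\psi(v))=\bar{\psi}(\varphi(v))=0$ puts $\psi(v)\in\ker(\varphi)\cap\psi(\Fn)=\{0\}$, so $v\in\ker(\psi)$, and then $v\in\ker(\varphi)\cap\ker(\psi)=\{0\}$ by (i). As $\varphi$ is an injective endomorphism of the finite-dimensional space $\Fn$, it is a permutation.

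With $\varphi$ a permutation I would verify the remaining hypotheses of Theorem \ref{inv0} and invoke its ``furthermore'' clause. Since $\ker(\varphi)=\{0\}$, the condition $\ker(\varphi)\cap\psi(S_\psi)=\{0\}$ is automatic; and $\bar{\psi}=\varphi\circ\psi\circ\varphi^{-1}$ shows $\id-\bar{\psi}$ is similar to $\id-\psi$, so $\rank(\id-\bar{\psi})=\rank(\id-\psi)$ and hence $|S_\psi|=|S_{\bar{\psi}}|$. Theorem \ref{inv0} then yields $f^{-1}(x)=\varphi^{-1}\!\left(\frac{x-g(\bar{f}^{-1}(\bar{\psi}(x)))}{h(\bar{f}^{-1}(\bar{\psi}(x)))}\right)$. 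Writing $y:=\bar{f}^{-1}(\bar{\psi}(x))\in\psi(\Fn)$ and using that $\varphi^{-1}$ is again a $q$-polynomial, hence $\F$-linear with $h(y)\in\F^{*}$ acting as a scalar, I would pull out $1/h(y)$ and, from $h(y)\varphi(y)=\bar{f}(y)=\bar{\psi}(x)$, rewrite $y=\varphi^{-1}(\bar{\psi}(x))/h(y)$; substituting produces exactly the displayed expression for $f^{-1}$.

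Finally, for the ``in particular'' case $h(\psi(\Fn))=\{c\}$, the forward implication is the above, and for the converse I would check conditions (i) and (ii) of Theorem \ref{thm0} when $\varphi$ permutes $\Fn$: (i) holds since $\ker(\varphi)=\{0\}$, and (ii) holds because $\bar{f}=c\varphi$ on $\psi(\Fn)$ bijects $\psi(\Fn)$ onto $c\,\bar{\psi}(\Fn)=\bar{\psi}(\Fn)$, using $\varphi(\psi(\Fn))=\bar{\psi}(\Fn)$ and $c\in\F^{*}$. Setting $h\equiv c$ in the general formula and simplifying $y=c^{-1}\varphi^{-1}(\bar{\psi}(x))$ then gives $f^{-1}(x)=c^{-1}\varphi^{-1}(x-g(c^{-1}\varphi^{-1}(\bar{\psi}(x))))$.
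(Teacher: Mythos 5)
Your proposal is correct and follows essentially the same route as the paper's proof: invoke Theorem \ref{thm0} to collapse $\bar{f}$ to $h(x)\varphi(x)$ on $\psi(\Fn)$, show $\varphi$ bijects $\psi(\Fn)$ onto $\bar{\psi}(\Fn)$ and hence permutes $\Fn$ (your kernel argument is exactly Lemma \ref{corlem}, inlined), then apply the ``furthermore'' clause of Theorem \ref{inv0} and use $\F$-linearity of $\varphi^{-1}$ together with $h(y)\varphi(y)=\bar{\psi}(x)$ to reach the displayed formula. The only cosmetic differences are that the paper gets the bijection $\varphi|_{\psi(\Fn)}$ via surjectivity (through the identity $h(\psi(x))\varphi(\psi(x))=\varphi(\psi(x\,h(\psi(x))))$) where you use injectivity of $\bar{f}|_{\psi(\Fn)}$, and that you explicitly verify the standing hypotheses $|S_\psi|=|S_{\bar{\psi}}|$ and $\ker(\varphi)\cap\psi(S_\psi)=\{0\}$ of Theorem \ref{inv0} via $\bar{\psi}=\varphi\circ\psi\circ\varphi^{-1}$ --- a check the paper leaves implicit, so your write-up is if anything slightly more careful.
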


 Note in the above that, in particular, when $h(x) = c \in \F \setminus \{0\}$, 
 the inverse of $f$ is given in terms of the inverse of $\varphi$ on $\Fn$, which can be obtained (see (1) above).
 As a result of Corollary \ref{zero} we also obtain Corollaries \ref{QT}, \ref{TQ}, and \ref{NQ}.
 Here is one example of Corollary \ref{zero} using the fact that $T(x)^q - T(x) = 0$.
 
 \begin{prop}\label{prop1}
 Let $G \in \mathbb{F}_{q^2}[x]$ be arbitrary, let $Q(x) := x^q - x$, let $g = T \circ G$, let $h(x) = c \in \F \setminus \{0\}$, and let $\varphi(x) = ax^q + bx$, where $a,b \in \F$ are such that $a \neq \pm b$. Then both
 $\varphi$ and
 $$
 f(x) = h(Q(x)) \varphi(x) + g(Q(x)) = c(ax^q + bx) + T \circ G \circ Q(x)
 $$
 are permutations of $\mathbb{F}_{q^2}$, and the compositional inverse of $f$ on $\mathbb{F}_{q^2}$ is given by
 $$
 f^{-1}(x) = \dfrac{ax^q - bx}{c\left(a^2 - b^2\right)} - \dfrac{T \circ G \circ \left( \dfrac{Q(x)}{c(b-a)}   \right)}{c(a+b)}.
 $$
 \end{prop}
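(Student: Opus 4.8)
The plan is to recognize Proposition \ref{prop1} as a direct instance of the ``in particular'' case of Corollary \ref{zero}, under the dictionary $\psi = \bar{\psi} = Q$, $\varphi(x) = ax^q + bx$, $h(x) = c$, and $g = T \circ G$, and then to carry out the two explicit computations that the corollary leaves open, namely showing that $\varphi$ permutes $\mathbb{F}_{q^2}$ and determining $\varphi^{-1}$ in closed form.

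First I would verify the hypotheses of Corollary \ref{zero}. Each of $\varphi$, $\psi = Q$, $\bar{\psi} = Q$ is a $q$-polynomial. The commuting relation is the key structural check: using $x^{q^2} = x$ on $\mathbb{F}_{q^2}$ together with $a,b \in \F$ (so that $a^q = a$ and $b^q = b$), a short computation shows that both $\varphi \circ Q$ and $Q \circ \varphi$ equal $(b-a)Q$, whence $\varphi \circ \psi = \bar{\psi} \circ \varphi$, and trivially $|\psi(\mathbb{F}_{q^2})| = |\bar{\psi}(\mathbb{F}_{q^2})|$. The condition $(\bar{\psi} \circ g)|_{\psi(\mathbb{F}_{q^2})} = 0$ follows from $Q \circ T = 0$: since $T(y) \in \F$ we have $T(y)^q = T(y)$, so $\bar{\psi} \circ g = Q \circ T \circ G \equiv 0$. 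Finally $h(\psi(\mathbb{F}_{q^2})) = \{c\} \subseteq \F \setminus \{0\}$, which places us precisely in the ``in particular'' sub-case of Corollary \ref{zero}.

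Next I would show that $\varphi$ permutes $\mathbb{F}_{q^2}$, so that by Corollary \ref{zero} the polynomial $f$ permutes as well. Since $a,b \in \F$, the associated Dickson matrix is $\left(\begin{smallmatrix} b & a \\ a & b \end{smallmatrix}\right)$, with determinant $b^2 - a^2 = (b-a)(b+a) \neq 0$ because $a \neq \pm b$; hence $\varphi$ is a permutation. To obtain $\varphi^{-1}$ explicitly I would pair the equation $y = ax^q + bx$ with its $q$-th power $y^q = bx^q + ax$ (again using $a,b \in \F$ and $x^{q^2} = x$) and solve the resulting $2 \times 2$ linear system for $x$, which gives $\varphi^{-1}(x) = (ax^q - bx)/(a^2 - b^2)$.

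The remaining work is to substitute into the corollary's formula $f^{-1}(x) = c^{-1}\varphi^{-1}\bigl(x - g(c^{-1}\varphi^{-1}(\bar{\psi}(x)))\bigr)$ and simplify. Two observations make everything collapse to the stated expression: first, $Q(x)^q = -Q(x)$, which yields $\varphi^{-1}(Q(x)) = Q(x)/(b-a)$ and hence the inner argument $Q(x)/(c(b-a))$ fed into $g = T \circ G$; and second, that $g$ returns a trace value lying in $\F$, hence fixed by the $q$-th power map, so it splits cleanly out of the outer $\varphi^{-1}$. Carrying out this bookkeeping, with the denominator $a^2 - b^2 = (a-b)(a+b)$ cancelling against the factors $(a-b)$ and $(a+b)$ that arise, produces exactly the claimed $f^{-1}$. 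I expect the main obstacle to be purely this final Frobenius and denominator bookkeeping — making the signs and the factors $(b-a)$ versus $(a+b)$ land in the right places — rather than any conceptual difficulty, since the structural verification becomes immediate once one recognizes that $\bar{\psi} = \psi = Q$ is the correct choice.
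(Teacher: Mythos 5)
Your proposal is correct and follows essentially the same route as the paper: the paper deduces Proposition \ref{prop1} from Corollary \ref{TQ}, which is precisely the specialization of Corollary \ref{zero} (with $\psi = \bar{\psi} = Q$, $g = T \circ G$) that you invoke directly, and it cites the same three facts you compute — $\varphi^{-1}(x) = (ax^q - bx)/(a^2 - b^2)$, $\varphi^{-1}(Q(x)) = Q(x)/(b-a)$ via $Q(x)^q = -Q(x)$, and $T(x)^q = T(x)$. Your explicit verifications of the commutation relation $\varphi \circ Q = Q \circ \varphi = (b-a)Q$ and of the nonsingular Dickson matrix are slightly more detailed than the paper's one-line proof, but the argument is the same.
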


 Another consequence of Theorem \ref{inv0} is the following.
 
 \begin{cor}[See Theorem 1, \cite{marcos}]\label{nice4}
 Let $\varphi \in \F[x]$ be a $q$-polynomial permuting $\Fn$, let $G \in \F[x]$, let $\gamma \in \Fn$, let $c = T(\gamma)$, and assume
 that
 $$
 f(x) = \varphi(x) + \gamma G(T(x))
 $$
 permutes $\Fn$. Then the inverse of $f$ on $\Fn$ is given by 
 $$
 f^{-1}(x) = \varphi^{-1} \left(x - \gamma G\left( \bar{f}^{-1}(T(x))\right)  \right),
 $$
 where $\bar{f}(x) := cG(x) + \varphi(1)x$ is a permutation of $\F$.
\end{cor}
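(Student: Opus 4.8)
The plan is to realize Corollary~\ref{nice4} as a special instance of Theorem~\ref{inv0}. First I would match the parameters by setting $\psi = \bar\psi = T$, taking $h(x) = 1$, and setting $g(x) = \gamma G(x)$, so that $f(x) = h(\psi(x))\varphi(x) + g(\psi(x))$ recovers $\varphi(x) + \gamma G(T(x))$. The one structural point requiring genuine verification is the intertwining relation $\varphi \circ \psi = \bar\psi \circ \varphi$, which here becomes $\varphi \circ T = T \circ \varphi$. This I would establish from the fact that $\varphi \in \F[x]$ has coefficients in $\F$: writing $\varphi(x) = \sum_i a_i x^{q^i}$ with $a_i^{q^j} = a_i$, one gets $\varphi(x)^{q^j} = \varphi(x^{q^j})$, and then $T(\varphi(x)) = \sum_j \varphi(x^{q^j}) = \varphi(T(x))$ by additivity of $\varphi$. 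The remaining hypotheses of Theorem~\ref{thm0} are then immediate: $|\psi(\Fn)| = |\bar\psi(\Fn)|$ since $\psi = \bar\psi$, and $h(\psi(\Fn)) = \{1\} \subseteq \F \setminus \{0\}$.

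Next I would check the hypotheses that Theorem~\ref{inv0} adds. Since $\varphi$ permutes $\Fn$ we have $\ker(\varphi) = \{0\}$, which instantly yields condition~(i) of Theorem~\ref{thm0}, namely $\ker(\varphi) \cap \ker(\psi) = \{0\}$, as well as $\ker(\varphi) \cap \psi(S_\psi) = \{0\}$; and $|S_\psi| = |S_{\bar\psi}|$ holds trivially because $\psi = \bar\psi$. Since $f$ is assumed to permute $\Fn$ and (i) holds, condition~(ii) of Theorem~\ref{thm0} forces $\bar f$ to biject $\psi(\Fn)$ onto $\bar\psi(\Fn)$; as $\psi(\Fn) = \bar\psi(\Fn) = \F$, this says $\bar f|_\F$ permutes $\F$. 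Finally, because $\varphi \in \F[x]$ carries $\F$ into $\F$ and is injective, it bijects $\psi(\Fn) = \F$ onto $\bar\psi(\Fn) = \F$, placing us in the ``furthermore'' branch of Theorem~\ref{inv0}.

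The last step is to substitute $h = 1$, $g = \gamma G$, and $\bar\psi = T$ into the ``furthermore'' formula of Theorem~\ref{inv0}: since $h \equiv 1$ the denominator is $1$, so the expression collapses to $f^{-1}(x) = \varphi^{-1}\left(x - \gamma G\left(\bar f^{-1}(T(x))\right)\right)$, matching the claim. It then remains only to identify $\bar f$ on $\psi(\Fn) = \F$. By definition $\bar f(x) = h(x)\varphi(x) + \bar\psi(g(x)) = \varphi(x) + T(\gamma G(x))$, and for $x \in \F$ I would use $G(x) \in \F$ together with the $\F$-linearity of $T$ to write $T(\gamma G(x)) = G(x)T(\gamma) = cG(x)$, while $\varphi(x) = \varphi(1)x$ because $x^{q^i} = x$ on $\F$. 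Hence $\bar f$ agrees on $\F$ with $cG(x) + \varphi(1)x$, which therefore permutes $\F$ by the previous paragraph. I expect no serious obstacle: the substance of the argument lies entirely in correctly matching the parameters and in the short commutation computation $\varphi \circ T = T \circ \varphi$, this being the only subtle step, since it is precisely what lets a single trace polynomial serve simultaneously as $\psi$ and $\bar\psi$.
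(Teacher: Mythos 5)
Your proposal is correct and follows exactly the paper's route: the paper's entire proof is the one-line instantiation of Theorem~\ref{inv0} with $h = 1$, $g = \gamma G$, and $\psi = \bar{\psi} = T$, and you have simply spelled out the routine verifications (the commutation $\varphi \circ T = T \circ \varphi$ from $\varphi \in \F[x]$, the kernel conditions from $\ker(\varphi) = \{0\}$, and the identification $\bar{f}(x) = cG(x) + \varphi(1)x$ on $\F$) that the paper leaves implicit. All of these checks are accurate, so nothing further is needed.
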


Note in the case that $G$ is a $q$-polynomial, $\bar{f}|_{\F}(x) = (cG(1) + \varphi(1))x$; thus $\bar{f}^{-1}(T(x)) = T(x)/(cG(1) + \varphi(1))$
 and hence we can obtain the inverse of the permutation polynomial, $f$, by computing only $\varphi^{-1}$ (see (1) above). We thus obtain Corollary \ref{nice5}. 
We also note that if we take $G(x) = x^r$ and $c= \varphi(1)$, then $\bar{f}(x)$ in Corollary~\ref{nice4} has the form $c(x^r+x)$ whose inverse can be explicitly computed (see \cite{muratovic}, \cite{wang}, \cite{wang10}). 
Therefore the inverse of $f$ is again only dependent on $\varphi^{-1}$ which can also be obtained in terms of cofactors of the associate Dickson matrix as mentioned in (1). 
Another straightforward application of Corollary \ref{nice4} is the following example.

 \begin{prop}\label{prop2}
  As before, let $\varphi(x) = ax^q + bx$ where $a,b \in \F$ are such that $a \neq \pm b$, let $G(x) = x$, let $c = T(\gamma)$, where $\gamma \in \mathbb{F}_{q^2}$ is such that $a + b + c \neq 0$. Then both $\varphi$
  and 
  $$
  f(x) = \varphi(x) + \gamma T(x)
  $$
  are permutations of $\mathbb{F}_{q^2}$, and the compositional inverse of $f$ on $\mathbb{F}_{q^2}$ is given by
  $$
  f^{-1}(x) = \dfrac{ax^q - b x }{a^2 - b^2} - \dfrac{\left(a \gamma ^q - b \gamma  \right)T(x)   }{\left( a^2 - b^2 \right)(a + b + c)}.
  $$
 \end{prop}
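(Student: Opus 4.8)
The plan is to realize this as a direct application of Corollary~\ref{nice4} with $n = 2$, $\psi = T$ and $G(x) = x$, once the two required permutation properties have been checked by hand through the Dickson matrix criterion recalled in (1).

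First I would handle $\varphi$. Writing $\varphi(x) = ax^q + bx = bx + ax^q$ and using $a,b \in \F$, its associated Dickson matrix is $\left(\begin{smallmatrix} b & a \\ a & b\end{smallmatrix}\right)$, whose determinant $b^2 - a^2 = -(a-b)(a+b)$ is nonzero precisely because $a \neq \pm b$; hence $\varphi$ permutes $\mathbb{F}_{q^2}$. To produce $\varphi^{-1}$ explicitly I would raise $y = bx + ax^q$ to the $q$-th power to obtain the companion relation $y^q = ax + bx^q$ (using $a^q = a$, $b^q = b$, and $x^{q^2} = x$), and then solve the resulting $2\times 2$ linear system in $x, x^q$. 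Inverting $\left(\begin{smallmatrix} b & a \\ a & b\end{smallmatrix}\right)$ gives $\varphi^{-1}(x) = (ax^q - bx)/(a^2 - b^2)$.

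Next I would verify that $f$ permutes $\mathbb{F}_{q^2}$. Since $G(x) = x$, we have $f(x) = \varphi(x) + \gamma T(x) = (a+\gamma)x^q + (b+\gamma)x$, so $f$ is itself a $q$-polynomial over $\mathbb{F}_{q^2}$ with Dickson matrix $\left(\begin{smallmatrix} b+\gamma & a+\gamma \\ a+\gamma^q & b+\gamma^q\end{smallmatrix}\right)$. Expanding its determinant and using $\gamma + \gamma^q = T(\gamma) = c$, the norm terms $\gamma^{q+1}$ cancel and one is left with $(b^2 - a^2) + c(b-a) = (b-a)(a+b+c)$, which is nonzero because $a \neq b$ and $a+b+c \neq 0$. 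Therefore $f$ is a permutation of $\mathbb{F}_{q^2}$, and both permutation claims of the statement are established.

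With these in place I would invoke Corollary~\ref{nice4}. Here $\varphi(1) = a+b$, so the auxiliary polynomial is $\bar{f}(x) = cG(x) + \varphi(1)x = (a+b+c)x$, a permutation of $\F$ (as $a+b+c \neq 0$) with inverse $\bar{f}^{-1}(x) = x/(a+b+c)$. The corollary then gives $f^{-1}(x) = \varphi^{-1}\!\left(x - \gamma T(x)/(a+b+c)\right)$. The last step is to substitute the expression for $\varphi^{-1}$ into this: since $T(x) \in \F$ and $a+b+c \in \F$, computing the $q$-th power of the argument only replaces $x^q$ by its conjugate and $\gamma$ by $\gamma^q$, and collecting terms yields the claimed formula $f^{-1}(x) = (ax^q - bx)/(a^2-b^2) - (a\gamma^q - b\gamma)T(x)/\bigl((a^2-b^2)(a+b+c)\bigr)$. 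I do not expect a genuine obstacle in this argument; the only steps demanding care are the cancellation of the norm terms in the determinant of $f$ and the bookkeeping in the final substitution of $\varphi^{-1}$.
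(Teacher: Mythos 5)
Your proposal is correct and takes essentially the paper's intended route: the paper offers Proposition \ref{prop2} precisely as ``a straightforward application of Corollary \ref{nice4}'' with $G(x)=x$, giving $\bar{f}(x)=(a+b+c)x$ and $f^{-1}(x)=\varphi^{-1}\bigl(x-\gamma T(x)/(a+b+c)\bigr)$, exactly as you compute, and your closed form for $\varphi^{-1}$ matches the one the paper uses in Proposition \ref{prop1}. Your only deviation is verifying that $f$ permutes $\mathbb{F}_{q^2}$ by a direct Dickson-determinant computation (yielding $(b-a)(a+b+c)\neq 0$) rather than via the criterion of Theorem \ref{thm0}; this is a routine and valid way to supply the permutation hypothesis that Corollary \ref{nice4} assumes.
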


 Another result of ours is the following, given in terms of the inverse of a linearized permutation polynomial, which can be obtained. 
 
 \begin{cor}[See Corollary 6.2, \cite{yuan}]\label{nice}
  Let $n$ and $k$ be positive integers such that $\gcd(n,k) = d > 1$, let $s$ be any positive integer with
  $s(q^k - 1) \equiv 0 \pmod{q^n - 1}$. Let
  $$
  L_1(x) = \sum_{i=0}^{n/d - 1} a_i x^{q^{id}}, \text{    } a_i \in \F,
  $$
  be a $q^d$-polynomial with $L_1(1) = 0$, let $L_2$ be a $q$-polynomial over $\F$,
  and let $G \in \Fn[x]$. Assume
  $$
  f(x) = \left(G(L_1(x))  \right)^s + L_2(x)
  $$
  permutes $\Fn$. Then $L_2$ is a permutation of $\Fn$, and the inverse of 
  $f$ on $\Fn$ is given by
  $$
  f^{-1}(x) = L_2^{-1}\left( x - \left( G \circ L_2^{-1} \circ L_1(x)  \right)^s   \right).
  $$
 \end{cor}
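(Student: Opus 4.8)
The plan is to realize Corollary \ref{nice} as the special case of Corollary \ref{zero} (and hence, ultimately, of Theorem \ref{inv0}) obtained from the choices
$$
\psi = \bar{\psi} = L_1, \qquad \varphi = L_2, \qquad h(x) = 1, \qquad g(x) = (G(x))^s.
$$
With this dictionary $h(\psi(x))\varphi(x) + g(\psi(x)) = L_2(x) + (G(L_1(x)))^s = f(x)$, so it suffices to check the hypotheses of Corollary \ref{zero} and then read off the stated inverse from its ``in particular'' ($c = 1$) branch. Two hypotheses require genuine verification, the intertwining relation $\varphi \circ \psi = \bar{\psi} \circ \varphi$ and the annihilation condition $(\bar{\psi}\circ g)|_{\psi(\Fn)} = 0$; the remaining requirements $h(\psi(\Fn)) = \{1\} \subseteq \F \setminus \{0\}$ and $|\psi(\Fn)| = |\bar{\psi}(\Fn)|$ are immediate since $h \equiv 1$ and $\psi = \bar{\psi}$.

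For the intertwining relation I would use that linearized polynomials with coefficients in $\F$ commute under composition: writing $L_1 = \sum_i a_i x^{q^{id}}$ and $L_2 = \sum_j b_j x^{q^j}$ with $a_i, b_j \in \F$, the coefficient of $x^{q^{id+j}}$ in either composition equals $a_i b_j$, because every element of $\F$ is fixed by $x \mapsto x^q$. Hence $L_2 \circ L_1 = L_1 \circ L_2$, i.e.\ $\varphi \circ \psi = \bar{\psi} \circ \varphi$; this commutation also forces $\varphi = L_2$ to stabilize $\psi(\Fn) = L_1(\Fn)$, consistent with $\bar{f} = h\varphi$ bijecting $\psi(\Fn)$ onto $\bar{\psi}(\Fn)$.

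The annihilation condition is the crux, and it is where the two arithmetic hypotheses on $L_1$ and $s$ enter. First, from $s(q^k - 1) \equiv 0 \pmod{q^n - 1}$ we get $y^{s(q^k-1)} = 1$ for every $y \in \Fn^*$, so $(y^s)^{q^k} = y^s$ and therefore $y^s$ lies in $\mathbb{F}_{q^k} \cap \Fn = \mathbb{F}_{q^d}$; together with $0^s = 0$ this shows $x^s \in \mathbb{F}_{q^d}$ for all $x \in \Fn$, whence $g(y) = (G(y))^s \in \mathbb{F}_{q^d}$ for every $y$. Second, for $\alpha \in \mathbb{F}_{q^d}$ one has $\alpha^{q^{id}} = \alpha$, so $L_1(\alpha) = \alpha \sum_i a_i = \alpha\, L_1(1) = 0$; that is, $\mathbb{F}_{q^d} \subseteq \ker(L_1)$. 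Combining the two facts, $\bar{\psi}(g(y)) = L_1((G(y))^s) = 0$ for all $y \in \Fn$, so in particular $(\bar{\psi} \circ g)|_{\psi(\Fn)} = 0$.

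With all hypotheses in place, Corollary \ref{zero} applies: $f$ permutes $\Fn$ if and only if $\varphi = L_2$ does, which gives the asserted permutation property of $L_2$, and its $c = 1$ inverse formula reads
$$
f^{-1}(x) = \varphi^{-1}\bigl(x - g(\varphi^{-1}(\bar{\psi}(x)))\bigr) = L_2^{-1}\bigl(x - (G(L_2^{-1}(L_1(x))))^s\bigr),
$$
which is exactly $L_2^{-1}(x - (G \circ L_2^{-1} \circ L_1(x))^s)$. The main obstacle is the annihilation step: one must recognize that the modular condition on $s$ is precisely what confines the image of $x \mapsto x^s$ to the fixed field $\mathbb{F}_{q^d}$, on which the normalization $L_1(1) = 0$ makes the $q^d$-polynomial $L_1$ vanish, so that the ``$g$-part'' of $f$ is killed by $\bar{\psi}$ and the general decomposition of Theorem \ref{inv0} collapses to the clean one-term inverse above.
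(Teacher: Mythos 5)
Your proposal is correct and takes essentially the same route as the paper: the paper likewise instantiates its general inversion machinery with $\psi = \bar{\psi} = L_1$, $\varphi = L_2$, $h = 1$, $g = G^s$ (applying Theorem \ref{inv0} directly and citing \cite{yuan} for the facts that $L_1\left[(G(L_1(x)))^s\right] = 0$ and that $\bar{f} = L_2$ bijects $L_1(\Fn)$, whereas you pass through Corollary \ref{zero} and verify these facts yourself). Your self-contained verification — that $s(q^k-1) \equiv 0 \pmod{q^n-1}$ forces $x^s \in \mathbb{F}_{q^d}$, and that $L_1(1) = 0$ makes the $q^d$-polynomial $L_1$ vanish on $\mathbb{F}_{q^d}$ — is sound and simply fills in the details the paper outsources to \cite{yuan}.
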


 As a result of Corollary \ref{nice} we obtain Corollary \ref{nice2} as well.
 One more example, in this case independent of Theorem \ref{inv0}, is the following. It 
 makes use of a slight modification of the permutation construction given in Theorem 6.1, \cite{akbary} (also appearing as Corollary 3.2 in \cite{yuan}),
 which now takes into account the fact that $p$-polynomials over $\F$ commute with $q$-polynomials over $\mathbb{F}_p$. 
 We define the natural map $v_{q,n} : \mathscr{L}_n(\Fn) \rightarrow \Fn^n$ 
 by $v_{q,n}(\sum_{i=0}^{n-1} a_i x^{q^i}) = (a_0 \ a_1 \cdots  a_{n-1})$ (for simplicity we write vectors horizontally).
 
 \begin{thm}[See Theorem 6.1, \cite{akbary}]\label{nice3}
 Let $q = p^m$ be the power of a prime number $p$, let $L_1,L_2$ be $p$-polynomials over $\F$, 
 let $L_3$ be a $q$-polynomial over $\mathbb{F}_p$, and let $w \in \Fn[x]$ such that $w(L_3(\Fn)) \subseteq \F$. Then
  $$
  f(x) = L_1(x) + L_2(x) w(L_3(x))
  $$
  permutes $\Fn$ if and only if $\bar{f}(x) := L_1(x) + L_2(x) w(x)$ is a permutation of $L_3(\Fn)$, 
  and $\varphi_y(x) := L_1(x) + L_2(x) w(y)$ is a permutation of $\ker(L_3)$ for any $y \in L_3(\Fn)$.
  If $f$ induces a permutation of $\Fn$, we have:
  
  (a) If
  $x \in \Fn$ is such that $\ker(\varphi_{\bar{f}^{-1}(L_3(x))}) \cap L_3(S_{L_3}) = \{0\}$, then the preimage of $x$ under $f$ is given by 
  $$
  f^{-1}(x) = \bar{f}^{-1}(L_3(x)) + \varphi^{-1}_{\bar{f}^{-1}(L_3(x))}|_{S_{L_3}}
  \left(x - L_3(x)  \right).
  $$
  
  (b) If $L_3$ is idempotent, then the compositional inverse of $f$ on $\Fn$ is given by
  $$
  f^{-1}(x) = \bar{f}^{-1}(L_3(x)) + \varphi^{-1}_{\bar{f}^{-1}(L_3(x))}|_{\ker(L_3)}\left(x - L_3(x)  \right),
  $$
  where, for any fixed $y \in L_3(\Fn)$, the coefficients, $v_{p,mn}\left(\varphi^{-1}_y|_{\ker(L_3)}\right)$, of a $p$-polynomial $\varphi^{-1}_y|_{\ker(L_3)}$ inducing the inverse map of $\varphi_y|_{\ker(L_3)}$, are a solution to the
  linear equation 
  $$
  v_{p,mn}\left(\varphi^{-1}_y|_{\ker(L_3)}\right) D_{\varphi_y} = v_{p,mn}(x - L_3(x)),
  $$
  where $D_{\varphi_y}$ is the $mn \times mn$ Dickson matrix associated with the $p$-polynomial $\varphi_y$.
  
  (c) If $x \in \Fn$ is such that $\varphi_{\bar{f}^{-1}(L_3(x))}$ permutes $L_3(\Fn)$, then $\varphi_{\bar{f}^{-1}(L_3(x))}$ permutes $\Fn$, and the preimage of $x$ under $f$ is given by
  $$
  f^{-1}(x) = \varphi^{-1}_{\bar{f}^{-1}(L_3(x))}(x).
  $$
  \end{thm}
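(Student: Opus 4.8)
The plan is to reduce everything to two commutation identities forced by the coefficient fields. First I would record the elementary fact that every $p$-polynomial $P(x)=\sum_j a_j x^{p^j}$ over $\F$ commutes with every $q$-polynomial $L_3(x)=\sum_i c_i x^{q^i}$ over $\mathbb{F}_p$: since $a_j^{q^i}=a_j$ and $c_i^{p^j}=c_i$, both $P\circ L_3$ and $L_3\circ P$ collapse to $\sum_{i,j}a_jc_i x^{p^jq^i}$. Applying this to $L_1,L_2$ and pulling the scalar $w(L_3(x))\in\F$ through the $\F$-linear map $L_3$ gives the intertwining relation $L_3\circ f=\bar f\circ L_3$ on $\Fn$, which in particular shows $\bar f$ maps $L_3(\Fn)$ into itself. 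The same computation shows each $\varphi_y$ commutes with $L_3$, hence stabilizes $\ker(L_3)$, $L_3(\Fn)$, and $S_{L_3}$, and that $f(u)=\varphi_{L_3(u)}(u)$ for every $u$, with $f(x_0+k)=f(x_0)+\varphi_y(k)$ on the fiber $L_3^{-1}(y)$, $y=L_3(x_0)$.

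Next I would prove the permutation criterion by a fiber analysis. For ``if'', $f(u)=f(u')$ forces $\bar f(L_3(u))=\bar f(L_3(u'))$, so $L_3(u)=L_3(u')=:y$ by injectivity of $\bar f$ on $L_3(\Fn)$; then $\varphi_y(u-u')=0$ with $u-u'\in\ker(L_3)$ yields $u=u'$. For ``only if'', surjectivity of $f$ transfers to surjectivity of $\bar f$ on the finite set $L_3(\Fn)$ via $L_3(v)=\bar f(L_3(f^{-1}(v)))$, while $f(x_0+k)=f(x_0)+\varphi_y(k)$ together with injectivity of $f$ gives injectivity of each $\varphi_y$ on $\ker(L_3)$.

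For the inversion formulas, given $x$ with preimage $u=f^{-1}(x)$, applying $L_3$ gives $L_3(u)=\bar f^{-1}(L_3(x))=:y$, recovering the $L_3(\Fn)$-part of $u$. From $x=f(u)=\varphi_y(u)$ and the commutation of $\varphi_y$ with $\id-L_3$, I get $\varphi_y(u-L_3(u))=x-L_3(x)$ with $u-L_3(u)\in S_{L_3}$, which inverts to (a). Part (c) is the case where $\varphi_y$ also bijects $L_3(\Fn)$: a kernel chase ($\varphi_y(v)=0\Rightarrow\varphi_y(L_3(v))=0\Rightarrow L_3(v)=0\Rightarrow v=0$) shows $\varphi_y$ is then injective, hence bijective, on $\Fn$, so $u=\varphi_y^{-1}(x)$. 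Part (b) is the idempotent specialization: there $S_{L_3}=\ker(L_3)$ and $L_3(S_{L_3})=\{0\}$, so the side condition in (a) holds for all $x$ and the formula becomes a genuine compositional inverse; for the linear system I would take $\varphi_y^{-1}|_{\ker(L_3)}$ to be the $\mathbb{F}_p$-linear, hence $p$-polynomial, extension that inverts $\varphi_y$ on $\ker(L_3)$ and vanishes on $L_3(\Fn)$, which satisfies $\varphi_y^{-1}|_{\ker(L_3)}\circ\varphi_y=\id-L_3$ and therefore, under the algebra isomorphism between $p$-polynomials and Dickson matrices, gives $v_{p,mn}(\varphi_y^{-1}|_{\ker(L_3)})D_{\varphi_y}=v_{p,mn}(x-L_3(x))$.

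I expect the main obstacle to be the bookkeeping of part (a): separating the pointwise preimage from a global inverse, and proving that the stated side condition $\ker(\varphi_y)\cap L_3(S_{L_3})=\{0\}$ is exactly injectivity of $\varphi_y$ on all of $S_{L_3}$. Here the commutation does the work: $L_3(S_{L_3})\subseteq S_{L_3}$ makes one direction free, while for the other any $v\in\ker(\varphi_y)\cap S_{L_3}$ has $L_3(v)\in\ker(\varphi_y)\cap L_3(S_{L_3})=\{0\}$, so $v\in\ker(L_3)$, forcing $v=0$ since $f$ being a permutation already makes $\varphi_y$ injective on $\ker(L_3)$.
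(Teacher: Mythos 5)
Your proposal is correct: the commutation of $p$-polynomials over $\F$ with $q$-polynomials over $\mathbb{F}_p$ is exactly right, the intertwining $L_3\circ f=\bar{f}\circ L_3$ and the fiber identity $f(x_0+k)=f(x_0)+\varphi_y(k)$ give a valid proof of the permutation criterion, the identities $f(u)=\varphi_{L_3(u)}(u)$ and $\varphi_y(u-L_3(u))=x-L_3(x)$ yield (a) and (c), and your equivalence between the side condition $\ker(\varphi_y)\cap L_3(S_{L_3})=\{0\}$ and injectivity of $\varphi_y$ on $S_{L_3}$ (via $L_3(v)\in\ker(\varphi_y)\cap L_3(S_{L_3})$, then injectivity on $\ker(L_3)$) is sound. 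Your route differs from the paper's in execution, though, and it is worth comparing. The paper proves nothing pointwise here: it cites the permutation criterion from Theorem 6.1 of \cite{akbary} (adjusted by the same commutation fact you prove from scratch), and disposes of the inversion formulas in one line as ``essentially the same as the proof of Corollary \ref{key0}'', which instantiates general machinery: the injective transform $\phi_{L_3}(x)=(L_3(x),\,x-L_3(x))$ of Lemma \ref{i0} converts $f$ into the bivariate map $(y,z)\mapsto(\bar{f}(y),\varphi_y(z))$ inverted coordinatewise in Theorem \ref{inv1}, your side-condition analysis in (a) being subsumed by the AGW-type Lemma \ref{i2}, and the linear system in (b) being quoted from Corollary \ref{cor:IdempotentLinearEqn}, i.e.\ Theorem \ref{thm:LinearizedInverse} with the idempotent $K=L_3$. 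Your decomposition $u=L_3(u)+(u-L_3(u))$ is the same decomposition as $\phi_{L_3}$, so the skeleton agrees; what your version buys is self-containedness (including the criterion the paper only cites) and, in (b), a construction cleaner than the paper's Lemma \ref{lem:LinearizedInverse}: since $\varphi_y$ stabilizes both summands of $\Fn=\ker(L_3)\oplus L_3(\Fn)$, you may take the $\mathbb{F}_p$-linear extension vanishing on $L_3(\Fn)$ and verify $R\circ\varphi_y=\id-L_3$ directly, whereas the general lemma cannot assume the complement is $\varphi$-invariant and must route through an auxiliary projection onto $\varphi(\Fn)$. Note, consistently with the paper's phrasing, that your Dickson-matrix equation is satisfied by this particular extension rather than by every polynomial inducing the inverse map on $\ker(L_3)$, which is all the statement requires. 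What the paper's route buys in exchange is reusability: Theorem \ref{inv1} covers the whole $r$-term class with $g$ and the $\delta_i$, of which this theorem is a single instance.
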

  
  We remark that Theorem \ref{nice3} (b), (c), is a generalization, albeit non-explicit, of (2), (3), and its inverses (see \cite{coulter1}, \cite{wu}).  
  For example, letting $q$ be even and $n$ be odd, $a \in \F^*$, $L_3 = T$, $L_1(x) = a x^2$, $L_2(x) = x$, and $w(x) = L(x) + ax$, 
  we obtain $f(x) = ax^2 + x w(T(x)) = x(L(T(x)) + a T(x) + a x)$ in (3) with $L_3 = T$ being idempotent in this case of $q,n$.
  
  The rest of this paper goes as follows. In Section 2 we study a few basic and preliminary concepts which will be of use in further sections.
  In particular, we show in Theorem \ref{thm:LinearizedInverse} how to obtain linearized polynomials inducing inverse 
  maps on prescribed subspaces on which given linearized polynomials induce bijections.
  In Section 3 we prove Theorems \ref{inv0}, \ref{inv1}, where we give the compositional inverses
  of two classes of permutation polynomials given in 
  \cite{akbary}, \cite{yuan}, respectively, written in terms of the inverses of two polynomials permutating subspaces of $\Fn$. We then proceed to obtain several corollaries from them. 
  In Section 4 
  we explicitly obtain, in Theorem \ref{generalization} and Corollary \ref{cor:compositional_inverse}, the preimages and compositional inverse, respectively, 
  of a class of permutation
  polynomials generalizing that in (2) (see \cite{coulter1}) and (3) (see \cite{wu}) above. 
  Finally in Section 5 we prove Theorem \ref{thm: simple proof} below, which gives the explicit compositional inverse of a class of permutation
  polynomials generalizing that of a linearized permutation class whose inverse was recently obtained in \cite{wu_new}.
  See also Lemma \ref{lem: simple proof} which corresponds to the case when $G(x) = x$ and $c = 1$ below.
  In particular, the method employed here to obtain such a result,
  as an application of Theorem \ref{inv0}, seems considerably less complicated than that of \cite{wu_new}. 
  
  \begin{thm}\label{thm: simple proof}
  Let $\alpha \in \Fn$, $c \in \F^*$ and $G \in \Fn[x]$ be arbitrary. Then 
  $$
F(x) = c \left( x^q - x + T(\alpha x)  \right) +  G(T(\alpha x))^q - G(T(\alpha x))
$$
is a permutation polynomial over $\Fn$ if and only if $T(\alpha) \neq 0$ and the characteristic, $p$, of $\F$, does not divide $n$.
In this case the compositional inverse of $F$ is  
$$
F^{-1}(x) = c^{-1} \biggr[ T(\alpha)^{-1} n^{-1} \left(T(x) + B(x)   \right) +  T(\alpha)^{-1}T\left(\alpha G\left(c^{-1}n^{-1}T(x) \right) \right) -  G\left(c^{-1}n^{-1}T(x)\right)  \biggr],
$$
where the coefficients of $B(x) = \sum_{k=0}^{n-1}b_k x^{q^k} \in \Fn[x]$ are given by
 $$
 b_k = \sum_{j=1}^{n-1}j \alpha^{q^j} - n \sum_{l = k+1}^{n-1} \alpha^{q^l}, \ \ \ \ 0 \leq k \leq n-1.
 $$
\end{thm}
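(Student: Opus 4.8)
The plan is to realize $F$ as an instance of the construction in Theorem \ref{thm0} and then read its inverse off Theorem \ref{inv0}, the only genuine computation being the explicit determination of a linearized polynomial inducing $\varphi^{-1}$ on the relevant subspace. Writing $Q(x) = x^q - x$ and noting that $T(\alpha x) = \sum_{i=0}^{n-1}\alpha^{q^i}x^{q^i}$ takes values in $\F$, I first observe that
\[
F(x) = c\,Q(x) + \big(\,c\,T(\alpha x) + G(T(\alpha x))^q - G(T(\alpha x))\,\big)
\]
has the shape $h(\psi(x))\varphi(x) + g(\psi(x))$ with $h \equiv 1$, $\varphi(x) = cQ(x)$, $\psi(x) = T(\alpha x)$, and $g(y) = cy + G(y)^q - G(y)$. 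To invoke Theorem \ref{thm0} I take $\bar\psi = T$: indeed $\varphi\circ\psi = 0$ since $\psi$ maps into $\F$, while $\bar\psi\circ\varphi = 0$ since $T(Q(x)) = 0$ and $\im\varphi = \ker T$, and $|\psi(\Fn)| = |\bar\psi(\Fn)| = q$. The permutation criterion then drops out of conditions (i), (ii): using $T(\alpha x) = xT(\alpha)$ for $x \in \F$ gives $\ker\varphi\cap\ker\psi = \F\cap\{x : T(\alpha x)=0\} = \{0\}$ iff $T(\alpha)\neq0$, while $\bar f(x) = T(g(x)) = cnx$ on $\psi(\Fn) = \F$, a bijection onto $\bar\psi(\Fn) = \F$ iff $cn\neq 0$, i.e. iff $p\nmid n$. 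Since Theorem \ref{thm0} is an equivalence, this settles both directions of the stated criterion.

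Assume henceforth that $F$ permutes, so $T(\alpha)\neq 0$ and $p\nmid n$; in particular $n^{-1}$ exists in $\F$. To obtain a clean decomposition I would now use the rescaled \emph{idempotent} realization $\psi(x) = T(\alpha)^{-1}T(\alpha x)$ and $\bar\psi(x) = n^{-1}T(x)$ (legitimate now that $T(\alpha), n \neq 0$), absorbing the scalars into $g(y) = cT(\alpha)y + G(T(\alpha)y)^q - G(T(\alpha)y)$; these still satisfy the hypotheses of Theorem \ref{thm0}, and by uniqueness of the inverse any valid application of Theorem \ref{inv0} yields the same $F^{-1}$. Because $\psi,\bar\psi$ are idempotent, $S_\psi = \ker\psi = \{x : T(\alpha x)=0\}$ and $S_{\bar\psi} = \ker\bar\psi = \ker T$, both of size $q^{n-1}$, and $\ker\varphi\cap\psi(S_\psi) = \ker\varphi\cap\{0\} = \{0\}$; so the extra hypotheses of Theorem \ref{inv0} hold and its first formula applies. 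Evaluating the ingredients, $\bar f$ restricts to $x\mapsto cT(\alpha)x$ on $\F$, whence $\bar f^{-1}(\bar\psi(x)) = c^{-1}T(\alpha)^{-1}n^{-1}T(x)$; pushing this through $g$ and simplifying with $T\circ T = nT$ and $T(z^q-z)=0$ collapses the argument of $\varphi^{-1}|_{S_{\bar\psi}}$ to $u := x - n^{-1}T(x) - G(w)^q + G(w)$, where $w = c^{-1}n^{-1}T(x)$, and already accounts for the terms $-G(w)$ and $c^{-1}T(\alpha)^{-1}n^{-1}T(x)$ in the final answer.

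The heart of the proof is computing $\varphi^{-1}|_{S_{\bar\psi}} : \ker T \to \ker\psi$, i.e. inverting $z\mapsto c(z^q-z)$. For $s\in\ker T$ the telescoping identity $v^{q^j} - v = \sum_{k=0}^{j-1}s^{q^k}$ (from $v^q - v = s$), summed over $0\le j\le n-1$, gives $nv = T(v) + \sum_{k}k\,s^{q^k}$ after using $T(s)=0$, so $R(s) := n^{-1}\sum_{k=0}^{n-1}k\,s^{q^k}$ satisfies $R(s)^q - R(s) = s$; hence the solution of $v^q - v = c^{-1}u$ is $c^{-1}R(u)$. Among all preimages of $u$ under $\varphi$ (which differ by $\ker\varphi = \F$) the one in $\ker\psi$ is singled out by adding the unique scalar killing the $\alpha$-trace, i.e. by applying $\Pi(z) := z - T(\alpha)^{-1}T(\alpha z)$; thus $\varphi^{-1}|_{S_{\bar\psi}}(u) = c^{-1}\Pi(R(u))$, which is exactly the map produced by Theorem \ref{thm:LinearizedInverse}. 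The key structural fact is that $\Pi$ annihilates every $\F$-valued summand (every multiple of a trace lies in $\F$), and this is precisely what makes the stray $T$-terms coming from $R(u)$ vanish, leaving $\Pi(R(u)) = \Pi(R(x)) - G(w) + T(\alpha)^{-1}T(\alpha G(w))$, using $R(G(w)^q - G(w)) = G(w) - n^{-1}T(G(w))$.

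Assembling the first formula of Theorem \ref{inv0} gives $F^{-1}(x) = c^{-1}T(\alpha)^{-1}n^{-1}T(x) + c^{-1}\Pi(R(u))$, and substituting the expression just obtained for $\Pi(R(u))$ reduces the claim to the single identity $\Pi(R(x)) = T(\alpha)^{-1}n^{-1}B(x)$, equivalently $B(x) = nT(\alpha)R(x) - nT(\alpha R(x))$. The last and most delicate step is the coefficient extraction: writing $T(\alpha R(x)) = n^{-1}\sum_{k}k\,T(\alpha x^{q^k})$ and expanding gives, for the coefficient of $x^{q^l}$ in $B$, the value $l\,T(\alpha) - \sum_{k=0}^{n-1}k\,\alpha^{q^{(l-k)\bmod n}}$; reindexing via $j=(l-k)\bmod n$ and splitting $(l-j)\bmod n$ according to whether $j\le l$ or $j>l$ collapses this to $\sum_{j=1}^{n-1}j\alpha^{q^j} - n\sum_{j=l+1}^{n-1}\alpha^{q^j}$, which is exactly the stated $b_l$. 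I expect this bookkeeping — tracking the cyclic index shifts and isolating the $\F$-valued terms that $\Pi$ discards — to be the main obstacle; everything else is direct substitution into Theorems \ref{thm0} and \ref{inv0}.
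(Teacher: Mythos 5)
Your proof is correct, but it takes a genuinely different route from the paper's. The paper factors $F$ through the auxiliary linearized permutation $f(x) = x^q - x + T(\alpha x)$: it first establishes Lemma \ref{lem: simple proof} (inverting $f$ via Theorem \ref{inv0} applied to $T(\alpha)^{-1}f$, with the subspace inverse of $Q$ supplied by the closed form $T(\alpha)^{-1}\sum_{k=0}^{n-1}\sum_{j=0}^{k}\alpha^{q^j}x^{q^k}$ of Lemma \ref{lem: special case} --- this is where $B$ is born), and then treats $F = c\,f + Q\circ G\circ T_\alpha$ as an instance of Corollary \ref{zero} with $\varphi = f$ a genuine permutation of $\Fn$, $\psi = T_\alpha$, $\bar{\psi} = T(\alpha)n^{-1}T$, $g = Q\circ G$ and constant $h = c$, so the final assembly only needs $f^{-1}$ from the lemma and the identity $Q^{-1}|_{\ker(T)}(Q(x)) = x - T(\alpha)^{-1}T(\alpha x)$. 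You instead make a single direct application of the first (subspace) formula of Theorem \ref{inv0} to $F$ itself, with $\varphi = cQ$ --- not a permutation of $\Fn$, only a bijection from $\ker(T_\alpha)$ to $\ker(T)$ --- absorbing both $cT(\alpha x)$ and $Q(G(T(\alpha x)))$ into $g$, and you build the subspace inverse of $Q$ from scratch via the telescoping solution $R(s) = n^{-1}\sum_{k}k\,s^{q^k}$ of $v^q - v = s$ on $\ker(T)$ followed by the projection $\Pi(z) = z - T(\alpha)^{-1}T(\alpha z)$; your $\Pi\circ R$ differs as a polynomial from the paper's Lemma \ref{lem: special case} inverse but induces the same map on $\ker(T)$, and I verified that your cyclic reindexing of $\ell T(\alpha) - \sum_k k\,\alpha^{q^{(\ell-k)\bmod n}}$ does reproduce exactly the stated $b_\ell$. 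The paper's factorization buys the standalone Lemma \ref{lem: simple proof}, of independent interest since it directly recovers the result of \cite{wu_new} (Corollary \ref{cor: wu's result}), and a short final proof; your one-shot route buys a self-contained argument that dispenses with Lemmas \ref{lem: special case} and \ref{lem: simple proof} entirely and exercises the subspace case of Theorem \ref{inv0} rather than the permutation case behind Corollary \ref{zero}. One small patch is needed: in the ``only if'' direction your appeal to Theorem \ref{thm0} asserts $|\psi(\Fn)| = q$, which fails when $\alpha = 0$ (then $\psi = 0$ and the hypothesis $|\psi(\Fn)| = |\bar{\psi}(\Fn)|$ of Theorem \ref{thm0} is violated); dispose of that degenerate case by hand, e.g.\ $F$ is then $cQ(x)$ plus a constant, so $F(0) = F(1)$, consistent with the criterion since $T(\alpha) = 0$.
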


 
 \section{Preliminaries}
 
 In this section we deal with some basic preliminaries concepts which will be of use in further sections. 
 Linearized polynomials play a crucial role throughout the paper and we thus explore here some of its properties. 
 One can show that all the additive maps of $\Fn$ are induced by $p$-polynomials. 
 Clearly, $q$-polynomials are also $p$-polynomials.
 The set, $\Ln$, of all $q$-polynomials over $\Fn$, forms a unital associative non-commutative $\F$-algebra 
 with multiplication given by the composition of polynomials, and using the usual addition of polynomials and scalar multiplication by elements of $\F$. 
 In contrast, its subset of 
 all $q$-polynomials over $\F$ does form a commutative subalgebra. 
 Even more, $q$-polynomials over $\mathbb{F}_p$ commute with $p$-polynomials over $\F$.
 Note also that $\Ln$ is isomorphic to the algebras of the $n \times n$ matrices over $\F$, and the $n \times n$ Dickson matrices over $\Fn$.
 In particular, the subset of $q$-polynomial permutations over $\Fn$ 
 forms a group, under composition, that is isomorphic to the group of non-singular $n \times n$ matrices over $\F$.
 As stated in the Introduction, a linearized polynomial induces a permutation of $\Fn$ if and only if its associate Dickson matrix is non-singular.
 See the recent work in \cite{wu_lin} for known and new characterizations of $\Ln$.
 Note that a $q$-polynomial $L$ over $\F$ permutes $\F$ if and only if $L(1) \neq 0$, since $L(x) = L(1)x$ for all $x \in \F$.
 
 In the following sections we give expressions for the compositional inverses, or preimages in certain cases, 
 particularly written in terms of linearized polynomials inducing the inverse map over subspaces on which prescribed linearized polynomials induce a bijection. 
 Thus here in this section we show, in Theorem \ref{thm:LinearizedInverse}, how to obtain such linearized inverses over subspaces. 
See Lemma \ref{lem:LinearizedInverse} as well.
 To the knowledge of the authors this is an original result. Note that up to now it was known how to obtain the compositional inverse of a linearized {\em permutation} polynomial
 (see (1) in the Introduction and the following proposition). 
 
 \begin{prop}[{\bf Theorem 4.8, \cite{wu_lin}}]\label{LinInv}
  Let $L(x) = \sum_{i=0}^{n-1}a_i x^{q^i} \in \Ln$ be a linearized permutation polynomial and $D_L$ be its associate Dickson matrix. Assume $\bar{a}_i$ is the $(i,0)$-th cofactor of 
  $D_L$, $0 \leq i \leq n-1$. Then $\det(L) = \sum_{i=0}^{n-1} a^{q^i}_{n-i} \bar{a}_i$ and
  $$
  L^{-1}(x) = \dfrac{1}{\det(L)} \sum_{i=0}^{n-1}\bar{a}_i x^{q^i}.
  $$
 \end{prop}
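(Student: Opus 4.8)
The plan is to pass through the $\F$-algebra isomorphism $L \mapsto D_L$ from $\Ln$ onto the algebra of $n \times n$ Dickson matrices over $\Fn$, under which composition of $q$-polynomials corresponds to ordinary matrix multiplication and the identity map $\id(x) = x$ corresponds to the identity matrix $I_n$. First I would record the two bookkeeping facts that drive everything: (a) for any $q$-polynomial $M(x) = \sum_{i=0}^{n-1} c_i x^{q^i}$, the coefficient vector $(c_0, \ldots, c_{n-1})$ is exactly the zeroth (top) row of $D_M$; and (b) the $(i,0)$ entry of $D_L$ equals $a_{n-i}^{q^i}$, reading indices on the $a_j$ modulo $n$ (so $a_n = a_0$), directly from the definition of $D_L$. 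Since $L$ permutes $\Fn$, its Dickson matrix $D_L$ is nonsingular, and the inverse permutation $L^{-1}$ is the unique $q$-polynomial with $D_{L^{-1}} = D_L^{-1}$.

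Next I would compute $D_L^{-1}$ by the classical adjugate formula $D_L^{-1} = \det(D_L)^{-1}\operatorname{adj}(D_L)$, recalling that $\operatorname{adj}(D_L)$ is the transpose of the cofactor matrix, so that its $(0,i)$ entry is the $(i,0)$ cofactor $\bar a_i$. By fact (a) applied to $M = L^{-1}$, the coefficient of $x^{q^i}$ in $L^{-1}$ is the $(0,i)$ entry of $D_{L^{-1}} = D_L^{-1}$, which by the adjugate formula equals $\bar a_i/\det(D_L)$. Summing over $i$ yields $L^{-1}(x) = \det(D_L)^{-1}\sum_{i=0}^{n-1}\bar a_i x^{q^i}$, which is the claimed expression once we set $\det(L) := \det(D_L)$.

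For the determinant formula I would simply expand $\det(D_L)$ by cofactors along its zeroth column: using fact (b), the column entries are $(D_L)_{i,0} = a_{n-i}^{q^i}$, and pairing each with its cofactor $\bar a_i$ gives $\det(D_L) = \sum_{i=0}^{n-1} a_{n-i}^{q^i} \bar a_i$, as stated.

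The only real care required is the index and transpose bookkeeping in facts (a), (b) and in the adjugate: one must correctly identify the top row of a Dickson matrix with the polynomial's coefficients, and remember that the adjugate transposes cofactors, so that the $(0,i)$ adjugate entry is the $(i,0)$ cofactor rather than the $(0,i)$ one. A purely computational alternative would be to verify directly that $L \circ \left(\det(L)^{-1}\sum_{i=0}^{n-1} \bar a_i x^{q^i}\right) = \id$ by expanding the composition and invoking the Laplace cofactor expansion identities, but this essentially re-proves the composition-to-multiplication correspondence and is less transparent than working through the isomorphism.
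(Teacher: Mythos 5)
Your proposal is correct. Note that the paper itself gives no proof of this proposition --- it is imported verbatim as Theorem 4.8 of \cite{wu_lin} --- so there is no internal argument to compare against; your route (the $\F$-algebra isomorphism $L \mapsto D_L$ sending composition to matrix multiplication and $\id$ to $I_n$, reading the coefficients of $L^{-1}$ off the top row of $D_L^{-1} = \det(D_L)^{-1}\operatorname{adj}(D_L)$, and obtaining the determinant identity by Laplace expansion along the zeroth column) is exactly the standard proof, and it is consistent with the machinery the paper does develop in Lemma \ref{composition}, whose entry formula $(D_\psi)_{ik} = b_{k-i}^{q^i}$ confirms both of your bookkeeping facts. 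The one point you pass over quickly --- that $D_L^{-1}$ is again a Dickson matrix, so that it equals $D_{L^{-1}}$ --- is justified exactly as you implicitly use it: the inverse of a bijective $\F$-linear map of $\Fn$ is again $\F$-linear, hence induced by some $M \in \Ln$, and $D_M D_L = D_{\id} = I_n$ forces $D_M = D_L^{-1}$.
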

 
 We start off by briefly noting a few basic properties of the set $S_\psi := \{ x - \psi(x) \mid x \in \Fn\}$, which will appear often throughout the paper.
 For $\psi \in \Ln$, we denote by $\psi^i$ the compositional power of $\psi$, that is, $\psi^i = \psi \circ \psi \circ \cdots \circ \psi$, $i$ times. 
 Denote by $\id \in \Ln$ the identity $\id(x) = x$.
 
\begin{lem}\label{0}
   Let $\psi \in \Ln$. Then the set $S_\psi := \{ x - \psi(x) \mid x \in \Fn\}$ is an $\F$-vector subspace of $\Fn$ and $\psi(S_\psi) \subseteq S_\psi$.
   Let $K \in \Ln$ such that $\ker(K) = S_\psi$. Then, for all $i \geq 0$, $K = K \circ \psi^i$ and $\ker(\psi^i) \subseteq S_\psi$. 
   If $\psi ^l = \psi^{l+1}$ for some positive integer $l$, then $S_\psi = \ker(\psi^l)$. 
  \end{lem}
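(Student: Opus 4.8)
The plan is to handle the four assertions of Lemma~\ref{0} in turn, each reducing to the $\F$-linearity of the $q$-polynomials at hand; throughout I write $\id - \psi$ for the $q$-polynomial $x \mapsto x - \psi(x)$, which is $\F$-linear because $\psi \in \Ln$ is. First, $S_\psi$ is by definition the image $(\id - \psi)(\Fn)$ of an $\F$-linear map, hence an $\F$-subspace of $\Fn$. For the invariance $\psi(S_\psi) \subseteq S_\psi$, I would note that $\psi$ commutes with $\id - \psi$ (their composite in either order equals $\psi - \psi^2$), so that $\psi(S_\psi) = \psi\bigl((\id-\psi)(\Fn)\bigr) = (\id-\psi)\bigl(\psi(\Fn)\bigr) \subseteq (\id-\psi)(\Fn) = S_\psi$.

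Next, to prove $K = K \circ \psi^i$ for all $i \geq 0$, the case $i = 0$ is immediate, and for $i = 1$ I would use that $K$ is $\F$-linear to write $K(x) - K(\psi(x)) = K(x - \psi(x))$; since $x - \psi(x) \in S_\psi = \ker(K)$ the right side vanishes, giving $K = K \circ \psi$. Iterating this (or an easy induction, $K \circ \psi^{i+1} = (K \circ \psi) \circ \psi^{i} = K \circ \psi^{i}$) yields the identity for every $i$.

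For the inclusion $\ker(\psi^i) \subseteq S_\psi$, the key step is the telescoping identity $x - \psi^i(x) = \sum_{j=0}^{i-1} (\id - \psi)(\psi^j(x))$, each of whose summands lies in $S_\psi$; as $S_\psi$ is a subspace, $x - \psi^i(x) \in S_\psi$, and when $\psi^i(x) = 0$ this forces $x \in S_\psi$ (equivalently, $K(x) = K(\psi^i(x)) = 0$ by the previous paragraph). Finally, under the hypothesis $\psi^l = \psi^{l+1}$, the inclusion $\ker(\psi^l) \subseteq S_\psi$ is already in hand, so it remains to check the reverse: for any $y = x - \psi(x) \in S_\psi$ one computes $\psi^l(y) = \psi^l(x) - \psi^{l+1}(x) = 0$, whence $S_\psi \subseteq \ker(\psi^l)$ and the two spaces coincide.

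I expect no genuine obstacle here; the only step requiring a moment's thought is the telescoping identity invoked for $\ker(\psi^i) \subseteq S_\psi$. The one background fact worth flagging is that the hypothesis on $K$ is non-vacuous: every $\F$-subspace of $\Fn$, in particular $S_\psi$, is the kernel of some $K \in \Ln$ (for instance $K = \prod_{v \in S_\psi}(x - v)$, which is a $q$-polynomial vanishing precisely on $S_\psi$), so such a $K$ always exists.
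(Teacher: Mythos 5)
Your proof is correct and takes essentially the same route as the paper's: the identity $K = K\circ\psi^i$ obtained from $K\circ(\id-\psi)=0$ via $\F$-linearity, the deduction $\ker(\psi^i)\subseteq\ker(K)=S_\psi$, and the computation $\psi^l(x-\psi(x))=\psi^l(x)-\psi^{l+1}(x)=0$ for the final equality are exactly the paper's steps, with you additionally supplying the details of the first two assertions (subspace property and $\psi$-invariance via the commutation $\psi\circ(\id-\psi)=(\id-\psi)\circ\psi$) that the paper leaves to the reader. Your telescoping identity $x-\psi^i(x)=\sum_{j=0}^{i-1}(\id-\psi)(\psi^j(x))$ is a nice $K$-free alternative for the inclusion $\ker(\psi^i)\subseteq S_\psi$, and your remark that $K=\prod_{v\in S_\psi}(x-v)$ is a $q$-polynomial with kernel exactly $S_\psi$ correctly justifies that the hypothesis on $K$ is never vacuous.
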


  \begin{proof}
   The reader can check that $S_\psi$ is indeed a vector space over $\F$ and $\psi(S_\psi) \subseteq S_\psi$. Since $\ker(K) = \im(\id - \psi)$, then for all $x \in \Fn$ we have
   $K \circ (\id - \psi)(x) = 0$ and thus $K(x) = K (\psi(x)) = K(\psi(\psi(x))) = \cdots$, i.e., $K = K \circ \psi^i$ for each $i \geq 0$.
   If $x \in \ker(\psi^i)$, then 
   $K(x) = K(\psi^i(x)) = K(0) = 0$. Hence $\ker(\psi^i) \subseteq \ker(K) = S_\psi$.
   If $\psi^l = \psi^{l+1}$, then $S_\psi \subseteq \ker(\psi^l)$; it follows that $S_\psi = \ker(\psi^l)$. 
   \end{proof}
   
   In particular, if $\psi$ is idempotent, i.e., $\psi^2 = \psi$, then $S_\psi = \ker(\psi)$. Moreover, 
   if $\psi$ is nilpotent, i.e., there exists a positive integer $l$ such that $\psi^l = 0$, then $S_\psi = \ker(0) = \Fn$.

  Recall the natural map $v_{q,n} : \mathscr{L}_n(\Fn) \rightarrow \Fn^n$ defined by $v_{q,n}(\sum_{i=0}^{n-1} a_i x^{q^i}) = (a_0 \ a_1\ \cdots \ a_{n-1})$ (for simplicity we write vectors horizontally).
  Let $X = (x, x^q, \ldots, x^{q^{n-1}})$ and let $X^t$ denote the transpose of $X$. The following lemma shows that the composition of two linearized polynomials is 
  essentially induced via a Dickson matrix multiplication. 
  
 \begin{lem}\label{composition}
  Let $\varphi, \psi \in \Ln$, let $D_\psi$ be the Dickson matrix corresponding to $\psi$. Then $\varphi(\psi(x)) = v_{q,n}(\varphi)D_\psi X^t$.
 \end{lem}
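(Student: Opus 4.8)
The plan is a direct coefficient comparison of the two sides, the only real content being careful bookkeeping of $q$-exponents modulo $n$. First I would fix the notation $\varphi(x) = \sum_{k=0}^{n-1} a_k x^{q^k}$ and $\psi(x) = \sum_{i=0}^{n-1} b_i x^{q^i}$, so that $v_{q,n}(\varphi) = (a_0\ a_1\ \cdots\ a_{n-1})$ and, reading all subscripts modulo $n$, the $(i,j)$-entry of the Dickson matrix $D_\psi$ is $b_{j-i}^{q^i}$ (this is exactly the cyclic shift pattern displayed for $D_L$ in the Introduction).

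The key step is to expand the left-hand side. Raising $\psi$ to the power $q^k$ gives $\psi(x)^{q^k} = \sum_{i=0}^{n-1} b_i^{q^k} x^{q^{i+k}}$; here I would invoke $x^{q^n} = x$ for all $x \in \Fn$ to reduce every exponent $q^{i+k}$ to $q^{(i+k)\bmod n}$. Substituting into $\varphi(\psi(x)) = \sum_{k=0}^{n-1} a_k \psi(x)^{q^k}$ and collecting the coefficient of $x^{q^j}$ (by writing $i \equiv j-k$) yields
$$
\varphi(\psi(x)) = \sum_{j=0}^{n-1} \left( \sum_{k=0}^{n-1} a_k\, b_{j-k}^{q^k} \right) x^{q^j},
$$
with subscripts taken modulo $n$.

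In parallel I would expand the right-hand side. The product $v_{q,n}(\varphi) D_\psi$ is the row vector whose $j$-th entry is $\sum_{i=0}^{n-1} a_i (D_\psi)_{i,j} = \sum_{i=0}^{n-1} a_i\, b_{j-i}^{q^i}$, and pairing this against the column vector $X^t = (x\ x^q\ \cdots\ x^{q^{n-1}})^t$ produces
$$
v_{q,n}(\varphi) D_\psi X^t = \sum_{j=0}^{n-1} \left( \sum_{i=0}^{n-1} a_i\, b_{j-i}^{q^i} \right) x^{q^j}.
$$
After the harmless renaming $k \mapsto i$ of the inner summation index, this is identical to the expression for $\varphi(\psi(x))$, which establishes the lemma.

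I do not anticipate any genuine difficulty; the entire argument is index manipulation. The one point demanding care---and the only place an error could enter---is ensuring that the cyclic shift $j-i$ built into $D_\psi$ matches the modular reduction $q^{i+k} \equiv q^{(i+k)\bmod n}$ forced by $x^{q^n}=x$ on $\Fn$. That these two conventions align is precisely why the Dickson matrix is the correct object to encode composition in $\Ln$.
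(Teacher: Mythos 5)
Your proof is correct and follows essentially the same route as the paper's: expand $\varphi(\psi(x))$, reindex the double sum with subscripts modulo $n$ (justified by $x^{q^n}=x$), and identify the inner sum $\sum_i a_i b_{j-i}^{q^i}$ with the $j$-th entry of $v_{q,n}(\varphi)D_\psi$. Your version is marginally more explicit than the paper's in writing out the matrix--vector product on the right-hand side, but the computation is the same.
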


\begin{proof}
Write $v_{q,n}(\varphi) = (a_0 \ a_1 \cdots a_{n-1})$, $v_{q,n}(\psi) = (b_0 \ b_1 \cdots b_{n-1})$, for some coefficients $a_i,b_j \in \Fn$. Then
\begin{align*}
\varphi(\psi(x)) &= \sum_{i=0}^{n-1}a_i \left( \sum_{j=0}^{n-1} b_j x^{q^j}  \right)^{q^i}
= \sum_{i=0}^{n-1}\sum_{j=0}^{n-1}a_i b_j^{q^i} x^{q^{i+j}}
= \sum_{i=0}^{n-1}\sum_{k=0}^{n-1}a_i b_{k-i}^{q^i} x^{q^{k}}\\
&= \sum_{k=0}^{n-1}\sum_{i=0}^{n-1} a_i b_{k-i}^{q^i} x^{q^{k}},
\end{align*}
where the subscripts are reduced modulo $n$. Since the $ik$-th entry of $D_\psi$ is given by $b_{k-i}^{q^i}$, the result follows. 
\end{proof}

We recall the well known concept of a projection operator on a vector space. These are the idempotent linear transformations. Let $U$ be a vector space which we write
as the internal direct sum of two subspaces $V, W$, that is, $U = V \oplus W$, where $U = V + W$ and $V \cap W = \{0\}$. We can write any element 
$u \in U$ uniquely as a sum $u = v + w$
where $v \in V$ and $w \in W$. Then the map $P : U \rightarrow U$ defined by $P(v + w) = w$ is called a {\em projection operator} of $U$ onto $W$ along $V$. 
It is easy to see that $P$ is a well-defined linear transformation with kernel $V$ and image $W$, and that $P(u) = u$ if and only if $u \in W$.
In fact, $P$ is a projection operator if and only if it is idempotent, i.e., $P(P(u)) = P(u)$ for all $u \in U$. 
Note that the existence of a projection operator onto any subspace is guaranteed, and thus we can always find 
at least one idempotent with a prescribed kernel. 

The following lemma gives the inverse map over subspaces on which a prescribed linear transformation is a bijection. 
It is true in general for arbitrary vector spaces and linear transformations.

\begin{lem}\label{lem:LinearizedInverse}
 Let $V, \bar{V}$, be two equally sized subspaces of a vector space $U$ over a field, 
 and let the linear operator $\varphi : U \rightarrow U$ be bijective from $V$ to $\bar{V}$. 
 There exists an idempotent $K : U \rightarrow U$ such that $\ker(K) = V$. Define the linear transformation $L : \varphi(U) \rightarrow V$ by $L(\varphi(u)) = u - K(u)$.
 Then $L|_{\bar{V}}$ is the inverse map of $\varphi|_V$.
 \end{lem}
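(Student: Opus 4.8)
The plan is to squeeze the entire lemma out of the single relation $L\circ\varphi=\id-K$, so I would begin by unpacking what the idempotent $K$ gives us. Since $K$ is idempotent with $\ker(K)=V$, we get the internal direct sum $U=V\oplus\im(K)$, and $\id-K$ is precisely the projection of $U$ onto $V$ along $\im(K)$. In particular $\im(\id-K)=\ker(K)=V$, which confirms that the prescribed codomain $V$ of $L$ is the right one, and moreover $(\id-K)|_V=\id_V$ while $(\id-K)$ kills $\im(K)$. The rule defining $L$ is exactly $L\circ\varphi=\id-K$, so proving the lemma amounts to (a) checking this rule actually defines a map on $\varphi(U)$, and (b) reading off the inverse property on $\bar{V}$.

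The main obstacle will be \emph{well-definedness} of $L$. For $L(\varphi(u))=(\id-K)(u)$ to be single-valued on $\varphi(U)$, the assignment must factor through $\varphi$, which is equivalent to the inclusion $\ker(\varphi)\subseteq\ker(\id-K)=\im(K)$. This is the real content, and it does \emph{not} follow from $\ker(K)=V$ alone, so $K$ must be chosen with some care rather than taken to be an arbitrary idempotent with the right kernel. Here is where I would use the bijectivity hypothesis: because $\varphi|_V$ is injective we have $\ker(\varphi)\cap V=\{0\}$, so $\ker(\varphi)$ can be extended to a complement $W$ of $V$ in $U$, and taking $K$ to be the projection onto $W$ along $V$ yields an idempotent with $\ker(K)=V$ and $\ker(\varphi)\subseteq\im(K)=W$. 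With such a $K$, if $\varphi(u_1)=\varphi(u_2)$ then $u_1-u_2\in\ker(\varphi)\subseteq\im(K)$, whence $(\id-K)(u_1)=(\id-K)(u_2)$; thus $L$ is well defined, and linearity is inherited directly from that of $\id-K$.

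Once $L$ is in hand the inverse identities are a one-line consequence. Since $\im(L)=(\id-K)(U)=V$, the restriction $L|_{\bar{V}}$ indeed maps $\bar{V}$ into $V$. For $v\in V$ we compute $L|_{\bar{V}}(\varphi|_V(v))=L(\varphi(v))=(\id-K)(v)=v$, using $v\in V=\ker(K)$; hence $L|_{\bar{V}}\circ\varphi|_V=\id_V$. As $\varphi|_V\colon V\to\bar{V}$ is assumed bijective, a left inverse is automatically the two-sided inverse, so $L|_{\bar{V}}$ is the inverse map of $\varphi|_V$, as claimed. If one prefers to verify the other composition explicitly, for $y=\varphi(v)\in\bar{V}$ with $v\in V$ one has $L(y)=v\in V$ and $\varphi(L(y))=\varphi(v)=y$, giving $\varphi|_V\circ L|_{\bar{V}}=\id_{\bar{V}}$. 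I expect well-definedness to be the only genuine difficulty; everything after it is immediate from the fact that $\id-K$ is the projection onto $V$.
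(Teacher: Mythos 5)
Your proof is correct, and it departs from the paper's at exactly the point that matters. The paper takes an \emph{arbitrary} idempotent $K$ with $\ker(K)=V$ and argues well-definedness of $L$ directly: given $\varphi(u)=\varphi(w)$ with $u\neq w$, it observes $w-u\in\ker(\varphi)\setminus\ker(K)$ and then asserts that $U=\ker(K)\oplus K(U)$ forces $w-u\in K(U)\setminus\{0\}$. That inference is false in general — an element outside $\ker(K)$ need not lie in the complementary summand $K(U)$; it can have nonzero components in both — and in fact the lemma fails for an arbitrary such $K$. Concretely, over any field let $U$ be the plane with standard basis $e_1,e_2$, let $V=\bar{V}=\ker(K)\cap U$ be the span of $e_1$, let $\varphi(x,y)=(x,0)$ (which is the identity on $V$, hence bijective from $V$ to $\bar{V}$), and let $K(x,y)=(y,y)$; then $K$ is idempotent with $\ker(K)=V$, yet $\varphi(0,1)=\varphi(0,0)$ while $(\id-K)(0,1)=(-1,0)\neq(0,0)=(\id-K)(0,0)$, so $L$ is not even well defined. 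Your diagnosis — that single-valuedness of $L$ is equivalent to $\ker(\varphi)\subseteq\ker(\id-K)=\im(K)$, which does \emph{not} follow from $\ker(K)=V$ alone — is therefore not optional care but a genuine repair: since injectivity of $\varphi|_V$ gives $\ker(\varphi)\cap V=\{0\}$, one can extend $\ker(\varphi)$ to a complement $W$ of $V$ and take $K$ to be the projection onto $W$ along $V$, after which well-definedness is immediate and the inverse identity reduces, exactly as in the paper's conclusion, to $L(\varphi(v))=v-K(v)=v$ for $v\in V$, with bijectivity of $\varphi|_V$ upgrading the left inverse to a two-sided one. What your route buys is a true statement with the existential quantifier on $K$ made honest (\emph{some} idempotent with kernel $V$ works, not every one); the corresponding caveat is that downstream applications, such as setting up the linear system $\bar{c}\,D_\varphi=v_{q,n}(\id-K)$ in Theorem \ref{thm:LinearizedInverse}, must likewise use a $K$ chosen with $\im(K)\supseteq\ker(\varphi)$, since for other idempotents with kernel $V$ that system need not be solvable.
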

 
 \begin{proof}
 Since we can construct a projection operator with kernel $V$, it is clear that such an idempotent $K$ exists. 
 First note that since $V = \ker(K) = \{u - K(u) \mid u \in U \}$, then $L(\varphi(U)) = V$. 
  To show that 
  $L$ is well-defined, assume that $\varphi(u) = \varphi(w)$ and $u \neq w$ (the case $u = w$ is trivial). We need to show that $L(\varphi(u)) = L(\varphi(w))$. 
  Because $\varphi$ is injective on $\ker(K)$, then $\ker(\varphi) \cap \ker(K) = \{0\}$; 
  thus, since $w - u \in \ker(\varphi)$, necessarily $w - u \notin \ker(K)$, otherwise $u = w$, a contradiction. The fact that $K$ is idempotent gives 
  $U = \ker(K) \oplus K(U)$, from which 
  it follows that $w - u \in K(U) \setminus \{0\}$, say $w - u = K(z)$ for some $z \in U \setminus \ker(K)$. 
  Then $\varphi(L(\varphi(u)) - L(\varphi(w))) = \varphi(u - w - K(u - w)) =  \varphi(K(w - u)) =  \varphi(K(K(z))) = \varphi(K(z)) = \varphi(w-u) = 0$; 
  hence $L(\varphi(u)) - L(\varphi(w)) \in \ker(\varphi)$. 
  Since $K$ is idempotent, $u - w - K(u - w) \in \ker(K)$. 
  Therefore $L(\varphi(u)) - L(\varphi(w)) \in \ker(\varphi) \cap \ker(K) = \{0\}$; thus $L(\varphi(u)) - L(\varphi(w)) = 0$ and so $L$ is well-defined.
  The reader can check that $L$ is a linear transformation.
  To see that $L|_{\bar{V}} = \varphi^{-1}|_{\bar{V}}$, note that for all $v \in V = \ker(K)$, $L(\varphi(v)) = v - K(v) = v$, 
  from which we get $\varphi(L(\varphi(v))) = \varphi(v)$. 
  Since $\varphi(V) = \bar{V}$, this 
  is the same as $\varphi(L(\bar{v})) = \bar{v}$ for all $\bar{v} \in \bar{V}$. Then $L|_{\bar{V}} = \varphi^{-1}|_{\bar{V}}$ by the uniqueness of inverse maps.
  \end{proof}
  
  We can now apply Lemma \ref{composition} and \ref{lem:LinearizedInverse} to obtain linearized polynomials inducing the inverse map 
  over subspaces on which a given linearized polynomial induces a bijection. 
  Obtaining the coefficients of such linearized polynomials is equivalent to solving a system of linear equations.

\begin{thm}\label{thm:LinearizedInverse}
   Let $V, \bar{V}$, be two equally sized $\F$-subspaces of $\Fn$, let $\varphi \in \Ln$ induce a bijection from $V$ to $\bar{V}$, and let $D_\varphi$
   be the associate Dickson matrix of $\varphi$. There exists $K \in \Ln$ inducing an idempotent endomorphism of $\Fn$ such that $\ker(K) = V$. 
   Then the coefficients, $\bar{c} = (c_0 \ c_1 \cdots c_{n-1})$, of one of the linearized
   polynomials inducing the inverse map of $\varphi|_V$, satisfy the linear equation $\bar{c} D_\varphi = v_{q,n}(\id - K)$.
  \end{thm}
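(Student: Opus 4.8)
The plan is to upgrade the set-theoretic inverse furnished by Lemma \ref{lem:LinearizedInverse} into an honest $q$-polynomial identity, and then read off the asserted linear system through the Dickson-matrix composition formula of Lemma \ref{composition}. First I would apply Lemma \ref{lem:LinearizedInverse} with $U = \Fn$. Since $V$ is an $\F$-subspace, I may take the idempotent $K$ there to be an $\F$-linear projection with $\ker(K) = V$; as every $\F$-linear endomorphism of $\Fn$ is induced by a unique element of $\Ln$, this $K$ lies in $\Ln$. The lemma then produces the linear map $L \colon \varphi(\Fn) \to V$ with $L(\varphi(u)) = u - K(u)$ and $L|_{\bar{V}} = \varphi^{-1}|_{\bar{V}}$; note $\bar{V} = \varphi(V) \subseteq \varphi(\Fn)$, so $L$ is defined on all of $\bar{V}$.

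Next I would promote $L$ to a genuine endomorphism of $\Fn$. Because $\varphi(\Fn)$ is an $\F$-subspace and $L$ is $\F$-linear on it, I extend $L$ to an $\F$-linear map $C \colon \Fn \to \Fn$; being $\F$-linear, $C$ is induced by a $q$-polynomial $C(x) = \sum_{i=0}^{n-1} c_i x^{q^i}$, and I set $\bar{c} = v_{q,n}(C)$. Since $\varphi(u) \in \varphi(\Fn)$ for every $u$, the extension agrees with $L$ there, so $C(\varphi(u)) = L(\varphi(u)) = u - K(u) = (\id - K)(u)$ for all $u \in \Fn$. That is, the crucial identity $C \circ \varphi = \id - K$ holds as maps of $\Fn$, hence as reduced $q$-polynomials; moreover $C|_{\bar{V}} = L|_{\bar{V}} = \varphi^{-1}|_{\bar{V}}$, so $C$ is indeed one of the linearized polynomials inducing the inverse map of $\varphi|_V$.

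Finally I would convert this identity into the claimed linear equation. Applying Lemma \ref{composition} to the left-hand side gives $C(\varphi(x)) = v_{q,n}(C) D_\varphi X^t = \bar{c} D_\varphi X^t$, so the reduced $q$-polynomial $C \circ \varphi$ has coefficient vector $\bar{c} D_\varphi$. Since $\id - K$ has coefficient vector $v_{q,n}(\id - K)$, and two $q$-polynomials induce the same map of $\Fn$ exactly when their reduced coefficient vectors coincide (the monomials $x, x^q, \dots, x^{q^{n-1}}$ being linearly independent as functions on $\Fn$), the identity $C \circ \varphi = \id - K$ is equivalent to $\bar{c} D_\varphi = v_{q,n}(\id - K)$, as desired.

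I expect the main obstacle to be establishing the \emph{exact} endomorphism identity $C \circ \varphi = \id - K$ on all of $\Fn$, rather than mere agreement on $\bar{V}$: only the former is in a form to which Lemma \ref{composition} applies. The delicate point is that $L$ from Lemma \ref{lem:LinearizedInverse} is a priori defined only on $\varphi(\Fn)$, so one must keep the well-definedness of $L$ guaranteed by the choice of $K$ and then extend $L$ $\F$-linearly to all of $\Fn$; the resulting $C$ automatically satisfies $C \circ \varphi = \id - K$ because $\varphi$ lands in $\varphi(\Fn)$, which is exactly where $C$ and $L$ coincide. Conversely, any solution $\bar{c}$ of the system yields $C \circ \varphi = \id - K$, whence $C(\varphi(v)) = v$ for $v \in V = \ker(K)$; thus every solution realizes $\varphi^{-1}$ on $\bar{V}$, confirming that solving the system is a legitimate method for computing the inverse over $V$.
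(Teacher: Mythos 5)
Your proposal is correct and follows essentially the same route as the paper: both apply Lemma \ref{lem:LinearizedInverse} with $U = \Fn$, extend $L$ from $\varphi(\Fn)$ to an $\F$-linear endomorphism of $\Fn$ (the paper does this concretely as $\bar{L} = L \circ P$ with $P$ a projection onto $\varphi(\Fn)$, which is one instance of your generic extension), realize it as an element of $\Ln$ satisfying the identity $C \circ \varphi = \id - K$ on all of $\Fn$, and then invoke Lemma \ref{composition} to cancel $X^t$ and obtain $\bar{c}\,D_\varphi = v_{q,n}(\id - K)$. Your closing observation that any solution of the system recovers $\varphi^{-1}|_{\bar{V}}$ is a small bonus beyond what the paper records, but does not change the argument.
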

  
  \begin{proof}
   We make use of the fact that when $U = \Fn$ in Lemma \ref{lem:LinearizedInverse}, all the linear transformations are induced by linearized polynomials.
   Now define the map $\bar{L} : \Fn \rightarrow \Fn$ by $\bar{L} = L \circ P$, where $L$ is given in Lemma \ref{lem:LinearizedInverse} (with $U = \Fn$), and $P$ is a projection operator of $\Fn$ onto $\varphi(\Fn)$.
   Note that $\bar{L}$ is a well-defined linear operator. The fact that $P$ is a projection operator onto $\varphi(\Fn)$ gives $\bar{L}(\varphi(x)) = L \circ P (\varphi(x)) = L(\varphi(x)) = x - K(x)$.
   In particular, $\bar{L}|_{\bar{V}} = L|_{\bar{V}} = \varphi^{-1}|_{\bar{V}}$. Thus we have obtained a linear operator of $\Fn$ inducing the inverse map of $\varphi|_V$.
   But all linear operations of $\Fn$ are induced by linearized polynomials. Since, additionally, $V,\bar{V}$, are $\F$-subspaces and $\varphi \in \Ln$, it follows that
   the map of $\bar{L}$ can be induced by a linearized polynomial in $\Ln$, say $R$, inducing the inverse
   map of $\varphi|_V$, and necessarily satisfying the equation $R(\varphi(x)) = x - K(x)$.
   Then using Lemma \ref{composition} we obtain the linear equation $v_{q,n}(R)D_\varphi  = v_{q,n}(\id - K)$ 
   (after cancelling out the arbitrary $X^t$ on both sides) as required. 
  \end{proof}
  
  Note that if $\varphi$ induces a permutation of $\Fn$, i.e., $D_\varphi$ is non-singular, then the required idempotent is $K = 0$, and 
  the coefficients of the compositional inverse of $\varphi$ are given by $\bar{c} = v_{q,n}(\id)D_{\varphi}^{-1}$, i.e., the coefficients in the first row of $D_\varphi^{-1}$. See (1) in the Introduction 
  and Proposition \ref{LinInv} for details.
  
  \begin{rmk}
   In the case that the coefficients of $\varphi \in \Ln$ belong to $\F$, $D_\varphi$ becomes a circulant matrix. 
   Then if $V$ is an $\F$-subspace and if the characteristic $p$ of $\Fn$ does not divide $n$, we can quickly solve the linear equation
   $\bar{c} D_{\varphi} = v_{q,n}(\id - K)$ by using the Circular Convolution Theorem for DFTs together with an FFT. See \cite{jones} for details on circulants and DFTs.
  \end{rmk}

   An immediate consequence is the following.
  
  \begin{cor}\label{cor:IdempotentLinearEqn}
   Let $\varphi, \psi \in \Ln$ such that $\psi$ induces an idempotent endomorphism of $\Fn$ and $\varphi$ induces a bijection from $S_\psi = \ker(\psi)$ 
   to an $\F$-subspace $\bar{V}$ of $\Fn$. 
   Then the coefficients, $\bar{c} = (c_0 \ c_1 \cdots c_{n-1})$, of a linearized polynomial inducing the inverse map from $\bar{V}$ to $S_\psi$, satisfy the linear equation
   $\bar{c} D_\varphi = v_{q,n}(\id - \psi)$.
   \end{cor}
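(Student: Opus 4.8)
The plan is to derive this as a direct specialization of Theorem \ref{thm:LinearizedInverse}. First I would take $V := S_\psi$ in the notation of that theorem. Since $\psi$ induces an idempotent endomorphism of $\Fn$, the last clause of Lemma \ref{0} (applied with $l = 1$, so that $\psi^l = \psi^{l+1}$) gives $S_\psi = \ker(\psi)$; thus $V = \ker(\psi)$ is indeed an $\F$-subspace of $\Fn$. Because $\varphi$ is assumed to biject $V = S_\psi$ onto $\bar{V}$, the subspaces $V$ and $\bar{V}$ are automatically of the same size, so the equal-dimension hypothesis of Theorem \ref{thm:LinearizedInverse} is satisfied, and $\varphi \in \Ln$ induces a bijection from $V$ to $\bar{V}$ as required.

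The key step is the choice of the idempotent $K \in \Ln$ demanded by Theorem \ref{thm:LinearizedInverse}. That theorem only guarantees the existence of some $K$ inducing an idempotent endomorphism of $\Fn$ with $\ker(K) = V$; here such a $K$ is handed to us for free, namely $K = \psi$ itself, because $\psi$ is idempotent and $\ker(\psi) = V$. With this choice in place, I would simply quote the conclusion of Theorem \ref{thm:LinearizedInverse}: the coefficient vector $\bar{c} = (c_0\ c_1 \cdots c_{n-1})$ of a linearized polynomial inducing the inverse map of $\varphi|_V = \varphi|_{S_\psi}$ (which runs from $\bar{V}$ back to $V = S_\psi$, matching the direction stated in the corollary) satisfies $\bar{c} D_\varphi = v_{q,n}(\id - K)$. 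Substituting $K = \psi$ turns this into $\bar{c} D_\varphi = v_{q,n}(\id - \psi)$, which is exactly the asserted linear equation.

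There is essentially no real obstacle here, as the result is labelled an immediate consequence of the theorem; the only point worth a remark is the verification that the hypotheses transfer correctly. Concretely, the one thing to confirm is that $\psi$ may legitimately play the role of the abstract idempotent $K$, which reduces to the identity $\ker(\psi) = S_\psi$ under idempotency from Lemma \ref{0}, together with the bookkeeping that the sought inverse map has domain $\bar{V}$ and codomain $S_\psi$. Once these are checked, the corollary follows by specialization with no further computation.
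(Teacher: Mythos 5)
Your proposal is correct and matches the paper's intent exactly: the paper states this corollary as an immediate consequence of Theorem \ref{thm:LinearizedInverse}, obtained by taking $K = \psi$ (the Introduction explicitly notes that for an idempotent $\psi$ one can simply let $K = \psi$, since $S_\psi = \ker(\psi)$ by Lemma \ref{0}). Your additional checks — that $|V| = |\bar{V}|$ follows from the assumed bijection, and that the inverse map runs from $\bar{V}$ to $S_\psi$ — are exactly the right bookkeeping and nothing more is needed.
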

   
   \begin{eg}\label{eg:IdempotentLinearEqn}
    Assume that the characteristic $p$ does not divide $n$ and work over $\Fn$, where $q = p^m$ as before. We have $n^{-1}T(n^{-1}T(x)) = n^{-2}T^2(x) = n^{-2}nT(x) = n^{-1}T(x)$ and so 
    $n^{-1}T$ is idempotent with kernel $\ker(T) = \{ \beta^q - \beta \mid \beta \in \Fn \}$. 
    Let $\varphi(x) := x^p + cx \in \mathscr{L}_{mn}(\mathbb{F}_{p^{mn}})$, where $c \in \F^*$ satisfies $c^{(q-1)/(p-1)} = (-1)^m$,
    and let $D_\varphi$ be its associate $mn \times mn$ Dickson matrix given by
    $$
   D_{\varphi} = \left( \begin{array}{cccccc}
                 c & 1 & 0 & 0 &\cdots & 0\\
                 0 & c^p & 1 & 0 & \cdots & 0\\
                 \vdots & \vdots & & & & \vdots\\
                1 & 0 & 0 & 0 & \cdots & c^{p^{mn-1}}
                \end{array} \right).
   $$
    Note that $\varphi(\ker(T)) \subseteq \ker(T)$. If $\varphi$ permutes the $\mathbb{F}_p$-subspace, $\ker(T)$, of $\Fn$, then by Theorem \ref{thm:LinearizedInverse}, the coefficients, 
    $\bar{d} = (d_0 \ d_1 \cdots d_{mn-1})$, of one of the linearized polynomials inducing the inverse permutation of $\varphi|_{\ker(T)}$, are a solution 
    to the linear equation 
    \begin{align*}
    \bar{d} D_\varphi &= v_{p,mn}\left(\id - n^{-1} T\right) = v_{p,mn}\left(x - n^{-1}\sum_{k=0}^{n-1}x^{p^{km}}\right) \\
    &= -n^{-1}(1-n,0,0, \ldots, 0, 1, 0,0, \ldots, 0, 1, 0,0, \ldots, 0),
    \end{align*}
    where the $1$ entries occur at indices (which start at $0$ and end at $mn-1$) that are non-zero multiples of $m$. Solving this system
    we obtain a solution 
    $$
    d_{km+j} = n^{-1}(-1)^j c^{-(p^{j+1} - 1)/(p-1)}(n-1-k),
    $$
    where $0 \leq k \leq n-1$ and $0 \leq j \leq m-1$. 
    Thus we have found a linearized polynomial inducing the inverse map of $\varphi|_{\ker(T)}$. In particular, $\varphi$ induces a permutation of $\ker(T)$ as maps are invertible
    if and only if they are bijections.
    \end{eg}

  \begin{rmk}\label{rmk:num_idempotent}
  There exist several linearized polynomials inducing idempotents operations of $\Fn$.
  In fact, the number of linearized polynomials in $\Ln$ inducing such idempotent maps is given by
  $$
  \sum_{k=0}^{n}\dfrac{|\GL(n,q)|}{|\GL(k,q)| |\GL(n-k,q)|},
  $$
  where $\GL(k,q)$ denotes the group of non-singular $k \times k$ matrices over $\F$ with (well-known) order given by $|\GL(k,q)| = \prod_{i=0}^{k-1}(q^k - q^i)$ for $k \geq 1$,
  and $|\GL(0,q)| = 1$ by convention.
  Indeed, we know that internal direct-sum decompositions of $\Fn$, into ordered 
  pairs of trivially-intersected $\F$-subspaces, are in correspondence with idempotent $\F$-linear operators of $\Fn$, 
  which in turn correspond to elements in $\Ln$ inducing such idempotent maps.
  Fixing an integer $k$, where $0\leq k \leq n$, we note that the bases of $\Fn$ over $\F$ are in bijective correspondence with ordered pairs of disjoint bases with dimensions
  $k$ and $n-k$, respectively, over $\F$. But to each $\F$-subspace of $\Fn$ of dimension, say $l$, corresponds $|\GL(l,q)|$ bases of it. 
  Then there are $\frac{|\GL(n,q)|}{|\GL(k,q)||\GL(n-k,q)|}$ decompositions of $\Fn$ into ordered pairs of trivially-intersected $\F$-subspaces with dimensions $k$ and $n-k$, 
  respectively. The claim now follows. For example, when $n=2$ and $q = 2,3,4,5$, this number is $8,14,22,32$, respectively.
 \end{rmk}

We now place our attention to other preliminary lemmata which will be of use in the following sections.
The following lemma is a generalization of Lemma 3.1 in \cite{wu} and thus serves in tackling more general classes of permutations of arbitrary finite fields.

\begin{lem}\label{i0}
 Let $q = p^m$ be a prime power, let $\psi, \bar{\psi} \in \Fn[x]$ be additive, and let $f \in \Fn[x]$. 
 Define the map $\phi_\psi : \Fn \rightarrow \psi(\Fn) \oplus S_\psi$ 
 by $\phi_\psi(x) = (\psi(x), x - \psi(x))$. Then $\phi_\psi$ is injective for any such $\psi$, and, for any $(y,z) \in \phi_\psi(\Fn)$, 
 $\phi_\psi^{-1}(y,z) = y + z$. Then $f$ permutes $\Fn$ if and only if the map 
 $F : \phi_\psi(\Fn) \rightarrow \phi_{\bar{\psi}}(\Fn)$, given by $F = \phi_{\bar{\psi}} \circ f \circ \phi_\psi^{-1}$, is bijective.
 In this case, $F^{-1} = \phi_\psi \circ f^{-1} \circ \phi_{\bar{\psi}}^{-1}$.
 \end{lem}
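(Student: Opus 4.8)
The plan is to exploit the fact that $\phi_\psi$ is constructed so that summing its two coordinates returns the input. First I would verify the codomain claim: for each $x \in \Fn$ we have $\psi(x) \in \psi(\Fn)$ and, by the very definition of $S_\psi$, $x - \psi(x) \in S_\psi$, so $\phi_\psi(x) = (\psi(x), x - \psi(x))$ indeed lies in the (external) direct sum $\psi(\Fn) \oplus S_\psi$, where addition is componentwise. The central observation is that the two coordinates of $\phi_\psi(x)$ add up to $\psi(x) + (x - \psi(x)) = x$. From this, injectivity is immediate: if $\phi_\psi(x) = \phi_\psi(x')$, then summing the coordinates of each side yields $x = x'$. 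The same identity shows that the map $(y,z) \mapsto y + z$ is a left inverse of $\phi_\psi$ on its image; hence, since $\phi_\psi$ is a bijection onto $\phi_\psi(\Fn)$, we conclude $\phi_\psi^{-1}(y,z) = y + z$ for all $(y,z) \in \phi_\psi(\Fn)$. Exactly the same argument applies with $\bar{\psi}$ in place of $\psi$.

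Next I would record that $\phi_\psi$ restricts to a bijection $\Fn \to \phi_\psi(\Fn)$ (and likewise $\phi_{\bar{\psi}} : \Fn \to \phi_{\bar{\psi}}(\Fn)$), even though neither map is surjective onto the full direct sum when $\psi$ (respectively $\bar{\psi}$) fails to be idempotent. With this in hand, $F = \phi_{\bar{\psi}} \circ f \circ \phi_\psi^{-1}$ is a well-defined map $\phi_\psi(\Fn) \to \phi_{\bar{\psi}}(\Fn)$, being the composite of the bijection $\phi_\psi^{-1}$, the self-map $f$ of $\Fn$, and the bijection $\phi_{\bar{\psi}}$.

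For the equivalence I would argue by the standard cancellation of bijections flanking $f$. If $f$ permutes $\Fn$, then $F$ is a composite of three bijections, hence a bijection. Conversely, since $\phi_\psi$ and $\phi_{\bar{\psi}}$ are bijections onto their images, I may solve for $f$ to obtain $f = \phi_{\bar{\psi}}^{-1} \circ F \circ \phi_\psi$; if $F$ is bijective, then so is $f$, and thus $f$ permutes the finite set $\Fn$. Finally, the formula $F^{-1} = \phi_\psi \circ f^{-1} \circ \phi_{\bar{\psi}}^{-1}$ follows directly from the rule for inverting a composition, namely $(\phi_{\bar{\psi}} \circ f \circ \phi_\psi^{-1})^{-1} = \phi_\psi \circ f^{-1} \circ \phi_{\bar{\psi}}^{-1}$, using that the inverse of $\phi_\psi^{-1}$ is $\phi_\psi$.

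I do not anticipate a genuine obstacle here, as the lemma is essentially a bookkeeping statement. The only point demanding care is to keep track of the restricted domains and codomains: $F$ is not a map between the full direct sums $\psi(\Fn) \oplus S_\psi$ and $\bar{\psi}(\Fn) \oplus S_{\bar{\psi}}$, but only between the (generally proper) images $\phi_\psi(\Fn)$ and $\phi_{\bar{\psi}}(\Fn)$, so the word \emph{bijective} must be interpreted with respect to those sets.
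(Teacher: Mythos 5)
Your proposal is correct and follows essentially the same route as the paper: the paper's proof also rests on the injectivity of $\phi_\psi$ and $\phi_{\bar{\psi}}$ (hence bijectivity onto their images, since $|\phi_\psi(\Fn)| = |\phi_{\bar{\psi}}(\Fn)| = |\Fn|$) together with the identity $F \circ \phi_\psi = \phi_{\bar{\psi}} \circ f$, merely invoking a finite-cardinality remark where you compose bijections explicitly. Your write-up simply fills in the details the paper dismisses as ``easy to check,'' including the careful point that $F$ maps between the images $\phi_\psi(\Fn)$ and $\phi_{\bar{\psi}}(\Fn)$ rather than the full direct sums.
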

 
 \begin{proof}
  It is easy to check that $\phi_\psi$ is well-defined and injective for any such $\psi$. Then $|\phi_\psi(\Fn)| =  |\phi_{\bar{\psi}}(\Fn)| = |\Fn|$.
  Hence, since $F \circ \phi_\psi = \phi_{\bar{\psi}} \circ f$, it follows that $f$ is 
  a permutation if and only if $F$ is a bijection (injectivity implies bijectivity since the respective 
  domain and codomain of $f, F$, are equally sized and finite). 
 \end{proof}

 This lemma proves valuable in that it provides a way of computing the inverses of certain classes of polynomials by computing the inverses
 of two other polynomials, one of these linearized, on subspaces; this should be easier. 
 
 First it would be interest to determine when is a linear map $\varphi$ a bijection between two similar subspaces $S_\psi, S_{\bar{\psi}}$. 
 Under certain restrictions on $\varphi,\psi,\bar{\psi}$, Lemma \ref{i2} shows when this happens. 
 Now we make use of the following lemma which is a slight (more general) variation of the AGW Criterion given in Lemma 1.2, \cite{akbary}.
 
 \begin{lem}\label{i1}
  Let $A,\bar{A},S,\bar{S}$, be finite sets with $|A| = |\bar{A}|$ and $|S| = |\bar{S}|$, and let 
  $f : A \rightarrow \bar{A}$, $\bar{f} : S \rightarrow \bar{S}$, $\lambda : A \rightarrow S$, and 
  $\bar{\lambda} : \bar{A} \rightarrow \bar{S}$ be maps such that $\bar{\lambda} \circ f = \bar{f} \circ \lambda$.
  If both $\lambda$ and $\bar{\lambda}$ are surjective, then the following two statements are equivalent:
  \\
  (i) $f$ is a bijection from $A$ to $\bar{A}$; and
  \\
  (ii) $\bar{f}$ is a bijection from $S$ to $\bar{S}$ and $f$ is injective on $\lambda^{-1}(s)$ for each $s \in S$.
  
  Furthermore, if $A, \bar{A}, S, \bar{S},$ form groups under some operation, $+$ (although the groups may not be abelian), and $f,\bar{f}, \lambda, \bar{\lambda},$ are homomorphisms on the respective groups, then $f$
  is injective on $\lambda^{-1}(s)$ for each $s \in S$ if and only if $\ker(f) \cap \ker(\lambda) = \{0\}$.
 \end{lem}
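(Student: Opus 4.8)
The plan is to treat this as a standard commutative-diagram (fiber-counting) argument built entirely on the intertwining relation $\bar{\lambda} \circ f = \bar{f} \circ \lambda$, using the finiteness and the cardinality hypotheses $|A| = |\bar{A}|$, $|S| = |\bar{S}|$ to upgrade ``injective'' or ``surjective'' to ``bijective'' wherever convenient. I would prove the equivalence of (i) and (ii) by establishing the two implications separately, and then treat the group statement as an independent clean-up.

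For (i) $\Rightarrow$ (ii): assuming $f$ is a bijection, injectivity of $f$ on each fiber $\lambda^{-1}(s)$ is immediate, since $f$ is injective on all of $A$. To get that $\bar{f}$ is a bijection I would prove surjectivity and then invoke $|S| = |\bar{S}| < \infty$. Given $\bar{s} \in \bar{S}$, surjectivity of $\bar{\lambda}$ produces $\bar{a} \in \bar{A}$ with $\bar{\lambda}(\bar{a}) = \bar{s}$; bijectivity of $f$ produces $a \in A$ with $f(a) = \bar{a}$; and the intertwining relation gives $\bar{f}(\lambda(a)) = \bar{\lambda}(f(a)) = \bar{\lambda}(\bar{a}) = \bar{s}$, so $\bar{f}$ hits $\bar{s}$. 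For (ii) $\Rightarrow$ (i): assuming $\bar{f}$ is a bijection and $f$ is injective on each fiber, I would prove $f$ is injective and again use $|A| = |\bar{A}| < \infty$. If $f(a_1) = f(a_2)$, then $\bar{f}(\lambda(a_1)) = \bar{\lambda}(f(a_1)) = \bar{\lambda}(f(a_2)) = \bar{f}(\lambda(a_2))$; injectivity of $\bar{f}$ forces $\lambda(a_1) = \lambda(a_2) =: s$, so $a_1, a_2 \in \lambda^{-1}(s)$, and fiberwise injectivity yields $a_1 = a_2$.

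For the final group assertion I would argue both directions directly from the definition of fiberwise injectivity. In the forward direction, if $f$ is injective on each fiber and $a \in \ker(f) \cap \ker(\lambda)$, then both $a$ and the identity lie in $\lambda^{-1}(0)$ and are sent by $f$ to the identity, so injectivity on that particular fiber forces $a$ to be trivial, giving $\ker(f) \cap \ker(\lambda) = \{0\}$. In the backward direction, if $\ker(f) \cap \ker(\lambda) = \{0\}$ and $a_1, a_2 \in \lambda^{-1}(s)$ satisfy $f(a_1) = f(a_2)$, then the element $a_1 a_2^{-1}$ lies in $\ker(\lambda)$, because $\lambda$ is a homomorphism and $\lambda(a_1) = \lambda(a_2)$, and likewise in $\ker(f)$, because $f(a_1) = f(a_2)$; hence it is trivial and $a_1 = a_2$. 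Since homomorphisms preserve inverses, this computation is valid even when the groups are non-abelian, as long as I consistently use the genuine group inverse $a_1 a_2^{-1}$ rather than the additive shorthand $a_1 - a_2$ that the ambient notation suggests.

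I expect no serious obstacle in this argument, as it is essentially the AGW criterion with the bijection conclusions relativized to the fixed subspaces. The only points demanding care are the repeated appeals to finiteness, so that establishing one of injectivity or surjectivity alone suffices to conclude bijectivity between equally sized sets, and the non-abelian bookkeeping in the kernel equivalence, where the apparent ``subtraction'' must be read as multiplication by a true group inverse to keep the homomorphism identities $\lambda(a_1 a_2^{-1}) = \lambda(a_1)\lambda(a_2)^{-1}$ and $f(a_1 a_2^{-1}) = f(a_1) f(a_2)^{-1}$ valid.
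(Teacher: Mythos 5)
Your proposal is correct and follows essentially the same route as the paper: the paper omits the proof of the equivalence of (i) and (ii) by citing Lemma 1.2 of the AGW paper \cite{akbary}, whose argument is exactly your fiber-counting one (surjectivity of $\bar{f}$ chased through $\bar{\lambda}\circ f=\bar{f}\circ\lambda$, injectivity of $f$ deduced from injectivity of $\bar{f}$ plus fiberwise injectivity, with finiteness and the cardinality hypotheses upgrading each to a bijection). For the group statement the paper's one-line computation $0\neq f(\alpha)-f(\beta)=f(\alpha-\beta)$ with $\alpha-\beta\in\ker(\lambda)$ is the same kernel argument you give; your multiplicative bookkeeping with $a_1a_2^{-1}$ is simply the careful reading of the paper's additive shorthand in the possibly non-abelian setting, and is valid because homomorphisms preserve inverses.
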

 
 \begin{proof}
The proof of (i), (ii), is identical to the proof of Lemma 1.2 in \cite{akbary} and we omit it. Now for the group case, assume 
$\alpha,\beta \in \lambda^{-1}(s) := \{ a \in A  \mid \lambda(a) = s \}$ such that $\alpha \neq \beta$. Then $0 \neq f(\alpha) - f(\beta) = f(\alpha - \beta)$ if and only if $\ker(f) \cap \ker(\lambda) = \{0\}$ (since $\alpha - \beta \in \ker(\lambda)$) as required.
\end{proof}

The following easy consequence gives us a criteria for determining when does a linearized polynomial induce a permutation of the finite field in question. 
We will often apply this useful result in the following sections.

\begin{lem}\label{corlem}
 Let $\varphi,\psi, \bar{\psi} \in \Fn[x]$ be additive such that $\varphi \circ \psi = \bar{\psi} \circ \varphi$ and $|\psi(\Fn)| = |\bar{\psi}(\Fn)|$. Then $\varphi$
 induces a permutation of $\Fn$ if and only if $\ker(\varphi) \cap \ker(\psi) = \ker(\varphi) \cap \psi(\Fn) = \{0\}$.
\end{lem}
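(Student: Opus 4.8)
The plan is to apply the AGW-type criterion of Lemma \ref{i1} to the homomorphisms in question, exploiting the hypothesis $\varphi \circ \psi = \bar{\psi} \circ \varphi$ to set up the commuting square. First I would take $A = \bar{A} = \Fn$ (viewed as additive groups, hence $\F$-vector spaces), and set $S = \psi(\Fn)$, $\bar{S} = \bar{\psi}(\Fn)$. The hypothesis $|\psi(\Fn)| = |\bar{\psi}(\Fn)|$ guarantees $|S| = |\bar{S}|$, so the cardinality conditions of Lemma \ref{i1} are met. I would then take $\lambda = \psi : \Fn \rightarrow \psi(\Fn)$ and $\bar{\lambda} = \bar{\psi} : \Fn \rightarrow \bar{\psi}(\Fn)$, both of which are surjective onto their images by construction, and I would take $f = \bar{f} = \varphi$. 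The relation $\bar{\lambda} \circ f = \bar{f} \circ \lambda$ required by Lemma \ref{i1} becomes exactly $\bar{\psi} \circ \varphi = \varphi \circ \psi$, which is our hypothesis.

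With this setup in place, Lemma \ref{i1}(i)$\Leftrightarrow$(ii) tells me that $\varphi$ is a bijection from $\Fn$ to $\Fn$ (i.e.\ a permutation) if and only if $\bar{f} = \varphi$ is a bijection from $S = \psi(\Fn)$ to $\bar{S} = \bar{\psi}(\Fn)$ \emph{and} $\varphi$ is injective on each fiber $\psi^{-1}(s)$, $s \in S$. Since all the maps here are additive (homomorphisms of the additive groups under $+$), the group case of Lemma \ref{i1} converts the fiber-injectivity condition into the single algebraic condition $\ker(\varphi) \cap \ker(\psi) = \{0\}$. Thus condition (ii) of Lemma \ref{i1} reads: $\varphi|_{\psi(\Fn)}$ is a bijection onto $\bar{\psi}(\Fn)$, together with $\ker(\varphi) \cap \ker(\psi) = \{0\}$.

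It remains to reconcile ``$\varphi$ bijects $\psi(\Fn)$ onto $\bar{\psi}(\Fn)$'' with the stated condition ``$\ker(\varphi) \cap \psi(\Fn) = \{0\}$''. Here I would use that $\varphi(\psi(\Fn)) = \bar{\psi}(\varphi(\Fn)) \subseteq \bar{\psi}(\Fn)$ by the commuting relation, so $\varphi$ maps $\psi(\Fn)$ into $\bar{\psi}(\Fn)$; since these two spaces are equally sized and finite, $\varphi|_{\psi(\Fn)}$ is a bijection onto $\bar{\psi}(\Fn)$ if and only if it is injective on $\psi(\Fn)$, which is precisely $\ker(\varphi) \cap \psi(\Fn) = \{0\}$. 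Combining, condition (ii) is equivalent to $\ker(\varphi) \cap \ker(\psi) = \ker(\varphi) \cap \psi(\Fn) = \{0\}$, giving the claim.

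The step I expect to require the most care is the last one: verifying that the surjectivity of $\varphi|_{\psi(\Fn)}$ onto $\bar{\psi}(\Fn)$ follows automatically from injectivity plus the equal-cardinality hypothesis, and confirming via $\varphi \circ \psi = \bar{\psi} \circ \varphi$ that the image $\varphi(\psi(\Fn))$ indeed lands inside $\bar{\psi}(\Fn)$ rather than some larger space. This image-containment observation is what lets the two separate intersection conditions collapse into the stated biconditional, and it is the only place where the precise form of the commuting hypothesis (as opposed to merely having \emph{some} intertwining map) is genuinely used.
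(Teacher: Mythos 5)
Your proposal is correct and is exactly the derivation the paper intends: Lemma \ref{corlem} is stated there as an immediate consequence of Lemma \ref{i1}, applied with $A = \bar{A} = \Fn$, $S = \psi(\Fn)$, $\bar{S} = \bar{\psi}(\Fn)$, $\lambda = \psi$, $\bar{\lambda} = \bar{\psi}$, $f = \bar{f} = \varphi$, with the group case converting fiber-injectivity into $\ker(\varphi) \cap \ker(\psi) = \{0\}$. Your careful check that $\varphi(\psi(\Fn)) \subseteq \bar{\psi}(\Fn)$ and that injectivity on $\psi(\Fn)$ suffices for bijectivity (by finiteness and $|\psi(\Fn)| = |\bar{\psi}(\Fn)|$) fills in precisely the routine details the paper omits.
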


\begin{lem}\label{i2}
  Let $\varphi,\psi, \bar{\psi} \in \Fn[x]$ be additive such that $\varphi \circ \psi = \bar{\psi} \circ \varphi$, $|\psi(\Fn)| = |\bar{\psi}(\Fn)|$, and $|S_\psi| = |S_{\bar{\psi}}|$.
  Then $\varphi$ induces a bijection between $S_\psi$ and $S_{\bar{\psi}}$ if and only if $\ker (\varphi) \cap \ker(\psi) = \ker(\varphi) \cap \psi(S_\psi) = \{0\}$.
 \end{lem}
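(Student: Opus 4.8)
The plan is to reduce the claimed bijectivity to an injectivity condition and then split that condition into the two stated intersection conditions. First I would establish that $\varphi$ maps $S_\psi$ into $S_{\bar{\psi}}$: for any $x \in \Fn$, the intertwining relation $\varphi \circ \psi = \bar{\psi} \circ \varphi$ gives
$$
\varphi(x - \psi(x)) = \varphi(x) - \varphi(\psi(x)) = \varphi(x) - \bar{\psi}(\varphi(x)),
$$
which has the form $y - \bar{\psi}(y)$ with $y = \varphi(x)$, and hence lies in $S_{\bar{\psi}}$. Since $\varphi$ is additive and $|S_\psi| = |S_{\bar{\psi}}|$ with both sets finite, $\varphi$ restricts to a bijection from $S_\psi$ onto $S_{\bar{\psi}}$ if and only if $\varphi|_{S_\psi}$ is injective, which happens if and only if $\ker(\varphi) \cap S_\psi = \{0\}$. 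Thus the whole lemma reduces to proving that $\ker(\varphi) \cap S_\psi = \{0\}$ if and only if $\ker(\varphi) \cap \ker(\psi) = \ker(\varphi) \cap \psi(S_\psi) = \{0\}$.

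For the forward implication I would invoke Lemma \ref{0}, which yields both $\ker(\psi) \subseteq S_\psi$ (the case $i=1$ of $\ker(\psi^i) \subseteq S_\psi$) and $\psi(S_\psi) \subseteq S_\psi$. Consequently $\ker(\varphi) \cap \ker(\psi)$ and $\ker(\varphi) \cap \psi(S_\psi)$ are both contained in $\ker(\varphi) \cap S_\psi$, so if the latter is trivial, then so are they.

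The reverse implication is the crux. Assuming $\ker(\varphi) \cap \ker(\psi) = \{0\}$ and $\ker(\varphi) \cap \psi(S_\psi) = \{0\}$, I would take any $v \in \ker(\varphi) \cap S_\psi$ and push it through $\psi$, exploiting the intertwining: since $\varphi(v) = 0$, we get $\varphi(\psi(v)) = \bar{\psi}(\varphi(v)) = 0$, so $\psi(v) \in \ker(\varphi)$; and because $v \in S_\psi$ we also have $\psi(v) \in \psi(S_\psi)$. Hence $\psi(v) \in \ker(\varphi) \cap \psi(S_\psi) = \{0\}$, forcing $\psi(v) = 0$, i.e.\ $v \in \ker(\psi)$. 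Then $v \in \ker(\varphi) \cap \ker(\psi) = \{0\}$, so $v = 0$, as required.

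I expect the main obstacle to be recognizing that the correct step is to send $v$ through $\psi$ and read off its membership in $\psi(S_\psi)$ via the intertwining identity; once this is seen, the argument closes immediately, and no cardinality hypothesis beyond $|S_\psi| = |S_{\bar{\psi}}|$ is actually needed. The remaining hypothesis $|\psi(\Fn)| = |\bar{\psi}(\Fn)|$ together with the AGW-style Lemma \ref{i1} offers an alternative route paralleling Lemma \ref{corlem}, but that route would require the extra comparison $|\psi(S_\psi)| = |\bar{\psi}(S_{\bar{\psi}})|$ in order to deduce the bijectivity of $\varphi$ on $\psi(S_\psi)$ from injectivity; the direct argument above avoids this and is therefore preferable.
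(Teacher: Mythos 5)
Your proof is correct, but it takes a genuinely different route from the paper's. The paper deduces the lemma as an instance of the AGW-type Lemma \ref{i1}: after checking $\varphi(S_\psi) \subseteq S_{\bar{\psi}}$ and $\varphi(\psi(S_\psi)) \subseteq \bar{\psi}(S_{\bar{\psi}})$, it sets $A = S_\psi$, $\bar{A} = S_{\bar{\psi}}$, $S = \psi(S_\psi)$, $\bar{S} = \bar{\psi}(S_{\bar{\psi}})$, $f = \bar{f} = \varphi$, $\lambda = \psi$, $\bar{\lambda} = \bar{\psi}$, and reads off the two kernel conditions from the group-homomorphism case of that lemma; to satisfy the hypothesis $|S| = |\bar{S}|$ it must first derive $|\psi(S_\psi)| = |\bar{\psi}(S_{\bar{\psi}})|$, and this is exactly where the assumption $|\psi(\Fn)| = |\bar{\psi}(\Fn)|$ enters (via $|\ker(\psi)| = |\ker(\bar{\psi})|$ together with $\ker(\psi) \subseteq S_\psi$, $\ker(\bar{\psi}) \subseteq S_{\bar{\psi}}$, and $|S_\psi| = |S_{\bar{\psi}}|$). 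You instead reduce bijectivity to $\ker(\varphi) \cap S_\psi = \{0\}$ and prove the nontrivial implication directly by pushing $v \in \ker(\varphi) \cap S_\psi$ through $\psi$: the intertwining relation gives $\psi(v) \in \ker(\varphi) \cap \psi(S_\psi) = \{0\}$, whence $v \in \ker(\varphi) \cap \ker(\psi) = \{0\}$. This is, in effect, the unpacked content of Lemma \ref{i1}'s fiber-injectivity plus injectivity of $\varphi$ on $\psi(S_\psi)$, specialized to additive maps, and your observation that the hypothesis $|\psi(\Fn)| = |\bar{\psi}(\Fn)|$ is then superfluous is accurate: your argument proves a slightly stronger statement (that hypothesis is genuinely used in the paper's route, and elsewhere, e.g.\ in Lemma \ref{corlem}, but not needed here). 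What the paper's approach buys is uniformity — the same Lemma \ref{i1} drives several results in the paper — while yours buys self-containedness and weaker hypotheses. One small bookkeeping point: you cite Lemma \ref{0} for $\ker(\psi) \subseteq S_\psi$ and $\psi(S_\psi) \subseteq S_\psi$, which is stated there for $\psi \in \Ln$, whereas here $\psi$ is merely additive; both facts are immediate for any additive $\psi$ (if $\psi(v)=0$ then $v = v - \psi(v) \in S_\psi$, and $\psi(x - \psi(x)) = y - \psi(y)$ with $y = \psi(x)$), so this is cosmetic rather than a gap.
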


 \begin{proof}
  First note that because $|\psi(\Fn)| = |\bar{\psi}(\Fn)|$, then $|\ker(\psi)| = |\ker(\bar{\psi})|$. As
  $\ker(\psi) \subseteq S_\psi$, $\ker(\bar{\psi}) \subseteq S_{\bar{\psi}}$, and $|S_\psi| = |S_{\bar{\psi}}|$, 
  it follows that $|\psi(S_\psi)| = |\bar{\psi}(S_{\bar{\psi}})|$. Moreover, since $\psi(S_\psi) \subseteq S_\psi$ and $\varphi \circ \psi = \bar{\psi} \circ \varphi$,
  we deduce that $\varphi(S_\psi) \subseteq S_{\bar{\psi}}$ and $\varphi(\psi(S_\psi)) \subseteq \bar{\psi}(S_{\bar{\psi}})$. 
  Now Lemma \ref{i1} finishes the proof if we let $A = S_\psi$, $\bar{A} = S_{\bar{\psi}}$, $S = \psi(S_\psi)$, 
  $\bar{S} = \bar{\psi}(S_{\bar{\psi}})$, $f = \bar{f} = \varphi$, $\lambda = \psi$, and $\bar{\lambda} = \bar{\psi}$.
 \end{proof}

\section{Inverses of some classes of permutations}

In this section we apply the method highlighted in the Introduction and proceed to obtain inverses of several classes of permutations.
The list of chosen permutations for which we study their inverses is by no means exhaustive, although the method could potentially be used to 
obtain several more inverses of other classes of permutations not given here. Moreover, more corollaries than we have given here may be possible
from our results, which could be obtained by choosing specific instances of the variables $\varphi, \psi$, $g$, $h$, etc..

\begin{proof}[{\bf Proof of Theorem \ref{inv0}}]
 First note that since $|S_\psi| = |S_{\bar{\psi}}|$ and $\ker(\varphi) \cap \psi(S_\psi) = \ker(\varphi) \cap \ker(\psi) = \{0\}$, then
 $\varphi$ is a bijection from $S_\psi$ to $S_{\bar{\psi}}$ (and hence invertible) by Lemma \ref{i2}.
 Now letting $y := \psi(x)$, $z := x - \psi(x)$, 
  we obtain
 \begin{align*}
  Y &:= \bar{\psi}(f(x)) = h(\psi(x)) \varphi(\psi(x)) + \bar{\psi}(g(\psi(x))) = h(y)\varphi(y) + \bar{\psi}(g(y)) \\
  &= \bar{f}(y) \in \bar{\psi}(\Fn); \\
  Z &:= f(x) - \bar{\psi}(f(x)) = h(y) \varphi(x) + g(y) - h(y) \varphi(y) - \bar{\psi}(g(y)) \\
  &= h(y) \varphi(z) + g(y) - \bar{\psi}(g(y)) \in S_{\bar{\psi}}.
  \end{align*}
  Then by the definition of $F$ from Lemma \ref{i0}, we get
  \begin{equation*}
  \begin{split}
  F(\phi_\psi(x)) &= F(y,z) = \phi_{\bar{\psi}}\left(f \left( \phi_\psi^{-1}(y,z) \right) \right) = \phi_{\bar{\psi}}(f(y+z))\\
  &= \phi_{\bar{\psi}}(f(x)) = (\bar{\psi}(f(x)), f(x) - \bar{\psi}(f(x)))\\
  &= (Y,Z).
  \end{split}
  \end{equation*}
  Hence $F$ is defined by 
  $$
  F(y,z) = (\bar{f}(y), h(y) \varphi(z) + g(y) - \bar{\psi}(g(y))) = (Y,Z)
  $$
  for $y \in \psi(\Fn)$ and $z \in S_\psi$. Now, from Lemma \ref{i0} we know that
  $f$ is a permutation of $\Fn$ if and only if $F$ induces a bijection between $\phi_{\psi}(\Fn)$ and $\phi_{\bar{\psi}}(\Fn)$. Since $\bar{f}$ induces a bijection between $\psi(\Fn)$, 
  $\bar{\psi}(\Fn)$, and in our case $h(y) \varphi(z) + g(y) - \bar{\psi}(g(y))$ induces a bijection between $S_{\psi}$, $S_{\bar{\psi}}$, for any $y \in \psi(\Fn)$,
  we can obtain the inverse of $F$ from the inverses of $\bar{f}|_{\psi(\Fn)}$ and $\varphi|_{S_\psi}$.
  Now since $y = \bar{f}^{-1}(Y)$, we obtain
  \begin{align*}
  z &= \varphi^{-1}|_{S_{\bar{\psi}}}    \left( \dfrac{Z - g(y) +  \bar{\psi}(g(y))  }{h(y)}  \right) \\
  &= \varphi^{-1}|_{S_{\bar{\psi}}}   \left( \dfrac{Z - g(\bar{f}^{-1}(Y)) +  \bar{\psi}(g(\bar{f}^{-1}(Y)))  }  {h(\bar{f}^{-1}(Y))}  \right).
  \end{align*}
  Thus,
  $$
  F^{-1}(Y,Z) = \left(\bar{f}^{-1}(Y), \varphi^{-1}|_{S_{\bar{\psi}}}   \left( \dfrac{Z - g(\bar{f}^{-1}(Y)) + \bar{\psi}(g(\bar{f}^{-1}(Y)))  }  {h(\bar{f}^{-1}(Y))}  \right) \right).
  $$
  As $f^{-1} = \phi^{-1}_{\psi} \circ F^{-1} \circ \phi_{\bar{\psi}}$ by Lemma \ref{i0}, then letting $Y = \bar{\psi}(x)$ and $Z = x - \bar{\psi}(x)$, we finally obtain
  $$
  f^{-1}(x) = \bar{f}^{-1}\left(\bar{\psi}(x)\right) + \varphi^{-1}|_{S_{\bar{\psi}}}   \left( \dfrac{x - \bar{\psi}(x) - g\left(\bar{f}^{-1}\left(\bar{\psi}(x)\right)\right) + \bar{\psi}\left(g\left(\bar{f}^{-1}\left(\bar{\psi}(x)\right)\right)\right) }     {h\left(\bar{f}^{-1}\left(\bar{\psi}(x)\right)\right)}   \right),
  $$
  as required. 
  
  If $\varphi$ is a bijection from $\psi(\Fn)$ to $\bar{\psi}(\Fn)$, then by Lemma \ref{corlem} it permutes $\Fn$ (since additionally $\ker(\varphi) \cap \ker(\psi) = \{0\}$
  because $f$ is a permutation). We claim that for any $\bar{y} \in \bar{\psi}(\Fn)$, $\bar{f}^{-1}(\bar{y})$ satisfies
  $$
  \bar{f}^{-1}(\bar{y}) = \varphi^{-1}\left( \dfrac{\bar{y} - \bar{\psi}\left( g\left( \bar{f}^{-1}(\bar{y}) \right) \right)}{h\left( \bar{f}^{-1}(\bar{y}) \right)} \right).
  $$
  Indeed,
  \begin{equation*}
   \begin{split}
    \bar{f}\left(\bar{f}^{-1}(\bar{y})  \right) &= h\left( \bar{f}^{-1}(\bar{y})\right) \varphi\left( \bar{f}^{-1}(\bar{y})\right) + \bar{\psi}\left( g\left( \bar{f}^{-1}(\bar{y})\right)  \right)\\
   &= h\left( \bar{f}^{-1}(\bar{y})\right) \varphi \left(  \varphi^{-1}\left( \dfrac{\bar{y} - \bar{\psi}\left( g\left( \bar{f}^{-1}(\bar{y}) \right) \right)}{h\left( \bar{f}^{-1}(\bar{y}) \right)} \right) \right) + \bar{\psi}\left( g\left( \bar{f}^{-1}(\bar{y})\right)  \right)\\
   &= \bar{y}.
   \end{split}
   \end{equation*}
Similarly one can show that $\bar{f}^{-1}\left(\bar{f}(y)  \right) = y$ for each $y \in \psi(\Fn)$.
  Now the fact that $\varphi^{-1}$ must be additive yields the result.
  \end{proof}
  
  \begin{cor}[See Theorem 5.5, \cite{akbary}]
 Let $a \in \F$, let $b \in \Fn$, let $P, L \in \F[x]$ be $q$-polynomials. Let $H \in \Fn[x]$ such that $H(L(\Fn)) \subseteq \F \setminus \{-a\}$, and assume
 $$
 f(x) = a P(x) + (P(x) + b) H(L(x))
 $$
 permutes $\Fn$. Then $\ker(P) \cap \ker(L) = \{0\}$, and if additionally $\ker(P) \cap L(S_{L}) = \{0\}$, the inverse of $f$ on $\Fn$ is given by
 $$
 f^{-1}(x) = \bar{f}^{-1}(L(x)) + \dfrac{P^{-1}|_{S_L} \left(x - L(x) - b H\left( \bar{f}^{-1}(L(x))   \right)  + L\left( b H \left( \bar{f}^{-1}(L(x))   \right)      \right)      \right)} {a + H\left(\bar{f}^{-1}(L(x))   \right)},
 $$
 where $\bar{f}(x) = (a + H(x))P(x) + L(b) H(x)$ is a permutation of $L(\Fn)$.
\end{cor}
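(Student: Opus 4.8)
The plan is to read this corollary as a direct specialization of Theorem \ref{inv0}, so that the bulk of the work is setting up the right dictionary between the variables here and those of Theorem \ref{thm0}. First I would rewrite
$$
f(x) = aP(x) + (P(x) + b)H(L(x)) = (a + H(L(x)))P(x) + bH(L(x)),
$$
and match this against the template $f(x) = h(\psi(x))\varphi(x) + g(\psi(x))$ by setting $\psi = \bar{\psi} = L$, $\varphi = P$, $h(x) = a + H(x)$, and $g(x) = bH(x)$. The choice $\bar{\psi} = \psi = L$ is legitimate precisely because $P$ and $L$ are both $q$-polynomials over $\F$, hence lie in the commutative subalgebra of $\Ln$; therefore $\varphi \circ \psi = P \circ L = L \circ P = \bar{\psi} \circ \varphi$, as required by the hypotheses of Theorem \ref{thm0}.

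With this dictionary the remaining side conditions of Theorem \ref{thm0} hold either automatically or cheaply: $|\psi(\Fn)| = |\bar{\psi}(\Fn)|$ and $|S_\psi| = |S_{\bar{\psi}}|$ because $\psi = \bar{\psi}$, while $h(\psi(\Fn)) = a + H(L(\Fn)) \subseteq \F \setminus \{0\}$ follows from the standing hypothesis $H(L(\Fn)) \subseteq \F \setminus \{-a\}$ together with $a \in \F$. Since $f$ is assumed to permute $\Fn$, condition (i) of Theorem \ref{thm0} immediately gives $\ker(\varphi) \cap \ker(\psi) = \ker(P) \cap \ker(L) = \{0\}$, which is the first assertion of the corollary.

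Next, under the additional hypothesis $\ker(P) \cap L(S_L) = \{0\}$ --- which is exactly $\ker(\varphi) \cap \psi(S_\psi) = \{0\}$ in the present notation --- all the hypotheses of Theorem \ref{inv0} are met (recall $\ker(\varphi) \cap \ker(\psi) = \{0\}$ was just established). Hence $\varphi = P$ bijects $S_\psi = S_{\bar{\psi}} = S_L$ by Lemma \ref{i2}, and the inverse formula of Theorem \ref{inv0} applies verbatim. Substituting $\bar{\psi}(x) = L(x)$, $\varphi = P$, $g(x) = bH(x)$, and $h(x) = a + H(x)$ into that formula produces the displayed expression for $f^{-1}$, with $\varphi^{-1}|_{S_{\bar{\psi}}}$ becoming $P^{-1}|_{S_L}$.

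The only genuine computation, and the step where I would be most careful, is the simplification of $\bar{f}(x) := h(x)\varphi(x) + \bar{\psi}(g(x)) = (a + H(x))P(x) + L(bH(x))$. Here I would invoke that $L$ is a $q$-polynomial over $\F$, hence $\F$-linear, and that $H(x) \in \F$ for every $x \in L(\Fn)$; therefore on the relevant domain $L(bH(x)) = H(x)L(b)$, which yields $\bar{f}(x) = (a + H(x))P(x) + L(b)H(x)$ as stated. That this $\bar{f}$ is a bijection from $L(\Fn)$ to $\bar{\psi}(\Fn) = L(\Fn)$ is exactly condition (ii) of Theorem \ref{thm0}, already guaranteed by the permutation hypothesis on $f$. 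I expect no other obstacle beyond keeping the substitutions consistent.
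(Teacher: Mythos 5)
Your proposal is correct and follows the paper's proof exactly: the paper disposes of this corollary in one line as the specialization of Theorems \ref{thm0} and \ref{inv0} with $h = a + H$, $\varphi = P$, $\psi = \bar{\psi} = L$, and $g = bH$, and you have simply spelled out the routine verifications (commutativity $P \circ L = L \circ P$ of $q$-polynomials over $\F$, the side conditions, and $L(bH(x)) = H(x)L(b)$ on $L(\Fn)$) that the paper leaves implicit. The only micro-step glossed in both accounts is that the stated formula has the factor $a + H(\bar{f}^{-1}(L(x))) \in \F^*$ pulled outside $P^{-1}|_{S_L}$ rather than inside as in Theorem \ref{inv0}, which is licensed by the $\F$-linearity of the inverse map.
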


\begin{proof}
This is Theorem \ref{thm0} and \ref{inv0} with $h = a + H$, $\varphi = P$, $\psi = L$, and $g = bH$.
\end{proof}

\begin{proof}[{\bf Proof of Corollary \ref{zero}}]
  By Theorem \ref{thm0}, if $f$ is a permutation of $\Fn$, 
  then $\bar{f}(x) = \bar{\psi}(g(x)) + h(x) \varphi(x) = h(x) \varphi(x)$ is a bijection from $\psi(\Fn)$ to $\bar{\psi}(\Fn)$. 
  In particular, $\bar{f}|_{\psi(\Fn)}$ is surjective. Thus for any $y \in \Fn$, there exists an $x \in \Fn$ such that 
  $\bar{\psi}(y) = h(\psi(x)) \varphi(\psi(x)) = \varphi(\psi(x) h(\psi(x)) ) = \varphi( \psi(x h(\psi(x))))$. Hence $\varphi$ is a surjection from $\psi(\Fn)$ to $\bar{\psi}(\Fn)$.
  But $|\psi(\Fn)| = |\bar{\psi}(\Fn)|$. Then $\varphi|_{\psi(\Fn)}$ is a bijection from $\psi(\Fn)$ to $\bar{\psi}(\Fn)$. In particular, if $h(\psi(\Fn)) = c \in \F^*$, then 
  $\bar{f}$ is a bijection from $\psi(\Fn)$ to $\bar{\psi}(\Fn)$ if and only if so is $\varphi$. Now, since $\ker(\varphi) \cap \ker(\psi) = \{0\}$ additionally (because $f$ is a permutation), 
  $\varphi$ is a permutation of $\Fn$ by Lemma \ref{corlem}. Now the rest follows from Theorem \ref{inv0} together with the fact that for any $\bar{y} \in \bar{\psi}(\Fn)$, 
  $\bar{f}^{-1}(\bar{y}) = \varphi^{-1}(\bar{y})/h(\bar{f}^{-1}(\bar{y}))$. 
 \end{proof}
 
We denote $Q(x) := x^q - x$, and $N(x) := x^{(q^n-1)/(q-1)}$. Since $q$-polynomials over $\F$ commute under composition, and
$T \circ Q = Q \circ T = Q \circ N = 0$, the following corollaries are straightforward applications of Corollary \ref{zero}.
 
 \begin{cor}[See Theorem 5.8, \cite{akbary}]\label{QT}
 Let $\varphi \in \F[x]$ be a $q$-polynomial, let $G \in \Fn[x]$, let $h \in \Fn[x]$ such that $h(\F) \subseteq \F \setminus \{0\}$.
 Assume that
 $$
 f(x) = h(T(x)) \varphi(x) + Q(G(T(x)))
 $$
 is a permutation of $\Fn$. Then $\varphi$ is a permutation of $\Fn$ and the inverse of $f$ on $\Fn$ is given by 
 $$
 f^{-1}(x) = \dfrac{\varphi^{-1}\left(x - Q \circ G \left( \dfrac{\varphi^{-1}\left(T(x)  \right)  }{h\left( \bar{f}^{-1}(T(x))\right)}  \right)    \right) }{h\left( \bar{f}^{-1}(T(x))\right)},
 $$
 where $\bar{f}(x) := \varphi(1) x h(x)$ is a permutation of $\F$.
\end{cor}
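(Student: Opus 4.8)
The plan is to derive Corollary \ref{QT} as an immediate specialization of Corollary \ref{zero}. Specifically, I would take $\psi = \bar{\psi} = T$ and $g = Q \circ G$, keeping $\varphi$ and $h$ exactly as given. Under these choices the template $f(x) = h(\psi(x))\varphi(x) + g(\psi(x))$ of Corollary \ref{zero} becomes precisely $f(x) = h(T(x))\varphi(x) + Q(G(T(x)))$, the permutation in question. So the whole task reduces to checking that the four hypotheses of Corollary \ref{zero} hold for this assignment and then reading off its conclusion.

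First I would confirm that $\varphi$, $\psi = T$, and $\bar{\psi} = T$ are $q$-polynomials satisfying $\varphi \circ \psi = \bar{\psi} \circ \varphi$. Since both $\varphi$ and $T$ have coefficients in $\F$, they lie in the commutative subalgebra of $q$-polynomials over $\F$, so $\varphi \circ T = T \circ \varphi$ holds automatically. The size condition $|\psi(\Fn)| = |\bar{\psi}(\Fn)|$ is trivial because $\psi = \bar{\psi}$. Next, the requirement $(\bar{\psi} \circ g)|_{\psi(\Fn)} = 0$ becomes $(T \circ Q \circ G)|_{T(\Fn)} = 0$, which follows at once from the identity $T \circ Q = 0$: indeed $T(x^q - x) = \sum_{i=0}^{n-1}(x^{q^{i+1}} - x^{q^i}) = x^{q^n} - x = 0$ on $\Fn$. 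Finally, since $T(\Fn) = \F$, the given hypothesis $h(\F) \subseteq \F \setminus \{0\}$ is exactly $h(\psi(\Fn)) \subseteq \F \setminus \{0\}$. With all hypotheses verified, Corollary \ref{zero} directly yields that $\varphi$ permutes $\Fn$ and supplies the inverse formula; substituting $\bar{\psi} = T$ and $g = Q \circ G$ into its conclusion reproduces the displayed expression for $f^{-1}$ verbatim.

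The one remaining point, which I regard as the only genuine subtlety rather than a real obstacle, is reconciling the two descriptions of $\bar{f}$. Corollary \ref{zero} records $\bar{f}(x) = h(x)\varphi(x)$ as a bijection of $\psi(\Fn) = \F$, whereas Corollary \ref{QT} writes it as $\bar{f}(x) = \varphi(1)\,x\,h(x)$. I would close the argument by noting that these agree on $\F$: a $q$-polynomial over $\F$ acts there as multiplication by the scalar $\varphi(1)$, since for $x \in \F$ one has $x^{q^i} = x$ and hence $\varphi(x) = \varphi(1)x$. Thus $\bar{f}(x) = h(x)\varphi(x) = \varphi(1)\,x\,h(x)$ on $\F$, matching the statement. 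I expect no essential difficulty in this proof beyond this bookkeeping and the telescoping verification of $T \circ Q = 0$.
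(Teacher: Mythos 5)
Your proposal is correct and is essentially the paper's own argument: the paper derives Corollary \ref{QT} exactly as a straightforward application of Corollary \ref{zero} with $\psi = \bar{\psi} = T$ and $g = Q \circ G$, citing the commutativity of $q$-polynomials over $\F$ and the identity $T \circ Q = 0$, just as you do. Your reconciliation of $\bar{f}(x) = h(x)\varphi(x)$ with $\varphi(1)\,x\,h(x)$ via $\varphi(x) = \varphi(1)x$ on $\F$ is the same bookkeeping the paper relies on implicitly.
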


 \begin{cor}[See Theorem 5.10, \cite{akbary}]\label{TQ}
  Let $\varphi \in \F[x]$ be a $q$-polynomial, let $G \in \Fn[x]$, let $h \in \Fn[x]$ such that $h(Q(\Fn)) \subseteq \F \setminus \{0\}$.
 Assume that
 $$
 f(x) = h(Q(x)) \varphi(x) + T(G(Q(x)))
 $$
 is a permutation of $\Fn$. Then $\varphi$ is a permutation of $\Fn$ and the inverse of $f$ on $\Fn$ is given by 
 $$
 f^{-1}(x) = \dfrac{\varphi^{-1}\left(x - T \circ G \left( \dfrac{\varphi^{-1}\left(Q(x)  \right)  }{h\left( \bar{f}^{-1}(Q(x))\right)}  \right)    \right)}{h\left( \bar{f}^{-1}(Q(x))\right)},
 $$
 where $\bar{f}(x) := h(x) \varphi(x)$ is a permutation of $Q(\Fn)$.
 \end{cor}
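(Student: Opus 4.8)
The plan is to recognize Corollary \ref{TQ} as a direct specialization of Corollary \ref{zero}, obtained by a suitable choice of the free parameters $\varphi,\psi,\bar\psi,g,h$. First I would set $\psi := Q$, $\bar\psi := Q$, and $g := T \circ G$, keeping $\varphi$ and $h$ as given. With these choices the permutation under consideration,
$$
f(x) = h(Q(x))\varphi(x) + T(G(Q(x))),
$$
takes exactly the shape $f(x) = h(\psi(x))\varphi(x) + g(\psi(x))$ featured in Corollary \ref{zero}.

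Next I would check that every hypothesis of Corollary \ref{zero} is met. The polynomials $\varphi$, $\psi = Q$, and $\bar\psi = Q$ are all $q$-polynomials, the first by assumption and the latter two since $Q(x) = x^q - x$. The commutation requirement $\varphi \circ \psi = \bar\psi \circ \varphi$ reads $\varphi \circ Q = Q \circ \varphi$; this holds because $\varphi$ and $Q$ both lie in the commutative subalgebra of $q$-polynomials over $\F$ (recorded in Section 2), so they commute under composition. The size condition $|\psi(\Fn)| = |\bar\psi(\Fn)|$ is automatic as $\psi = \bar\psi$, and the condition $h(\psi(\Fn)) \subseteq \F \setminus \{0\}$ is precisely the assumed $h(Q(\Fn)) \subseteq \F \setminus \{0\}$. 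Finally, the hypothesis $(\bar\psi \circ g)|_{\psi(\Fn)} = 0$ follows, in fact identically on $\Fn$, from $Q \circ T = 0$: since $T(G(x)) \in \F$ and $Q$ vanishes on $\F$, we get $\bar\psi(g(x)) = Q(T(G(x))) = 0$ for every $x$.

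Having verified the hypotheses, I would invoke Corollary \ref{zero} directly. It delivers all three assertions at once: that $\varphi$ permutes $\Fn$; that $\bar f(x) = h(x)\varphi(x)$ is a bijection from $\psi(\Fn) = Q(\Fn)$ onto $\bar\psi(\Fn) = Q(\Fn)$, i.e.\ a permutation of $Q(\Fn)$; and the explicit inverse formula. Substituting $\bar\psi = Q$ and $g = T \circ G$ into the expression for $f^{-1}$ supplied by Corollary \ref{zero} reproduces verbatim the claimed formula for $f^{-1}(x)$.

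The argument is essentially a matter of bookkeeping, so I do not anticipate a genuine obstacle. The only points requiring any care are matching the two hypotheses that are not purely formal: the commutation $\varphi \circ Q = Q \circ \varphi$ and the vanishing $\bar\psi \circ g = 0$. Neither is difficult here, the former being immediate from the commutativity of $q$-polynomials over $\F$ and the latter from the fact that the trace maps $\Fn$ into $\F$, on which $Q$ is identically zero.
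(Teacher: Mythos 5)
Your proposal is correct and matches the paper's own argument: the paper derives Corollary \ref{TQ} as a straightforward application of Corollary \ref{zero} with $\psi = \bar{\psi} = Q$ and $g = T \circ G$, citing exactly the two facts you verify, namely that $q$-polynomials over $\F$ commute under composition and that $Q \circ T = 0$. Your write-up merely spells out the bookkeeping the paper leaves implicit.
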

 
  \begin{proof}[{\bf Proof of Proposition \ref{prop1}}]
  Follows directly from Corollary \ref{TQ} by noticing that $\varphi^{-1}(x) = (a x^q - b x)/(a^2 - b^2)$ on $\mathbb{F}_{q^2}$, $\varphi^{-1}(Q(x)) = Q(x)/(b-a)$, and $T(x)^q = T(x)$ for any $x \in \mathbb{F}_{q^2}$.
 \end{proof}

\begin{cor}[See Theorem 5.10, \cite{akbary}]\label{NQ}
  Let $\varphi \in \F[x]$ be a $q$-polynomial, let $G \in \Fn[x]$, let $h \in \Fn[x]$ such that $h(Q(\Fn)) \subseteq \F \setminus \{0\}$.
  Let $N(x) := x^{(q^{n} - 1)/(q-1)}$, and assume that
 $$
 f(x) = h(Q(x)) \varphi(x) + N(G(Q(x)))
 $$
 is a permutation of $\Fn$. Then $\varphi$ is a permutation of $\Fn$ and the inverse of $f$ on $\Fn$ is given by 
$$
 f^{-1}(x) = \dfrac{\varphi^{-1}\left(x - N \circ G \left( \dfrac{\varphi^{-1}\left(Q(x)  \right)  }{h\left( \bar{f}^{-1}(Q(x))\right)}  \right)    \right)}{h\left( \bar{f}^{-1}(Q(x))\right)},
 $$
 where $\bar{f}(x) := h(x) \varphi(x)$ is a permutation of $Q(\Fn)$.
 \end{cor}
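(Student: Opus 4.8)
The plan is to obtain Corollary \ref{NQ} as the specialization of Corollary \ref{zero} in which $\psi = \bar{\psi} = Q$ and $g = N \circ G$, retaining the given $\varphi$ and $h$. First I would check the hypotheses of Corollary \ref{zero}. The polynomials $\varphi$ and $Q$ are both $q$-polynomials over $\F$ (the coefficients of $Q(x) = x^q - x$ lie in the prime field), so, since $q$-polynomials over $\F$ commute under composition, $\varphi \circ Q = Q \circ \varphi$; with the choice $\bar{\psi} = Q$ this is exactly the required intertwining $\varphi \circ \psi = \bar{\psi} \circ \varphi$, and $|\psi(\Fn)| = |\bar{\psi}(\Fn)|$ holds trivially because $\psi = \bar{\psi}$.

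The one genuinely new condition to verify is $(\bar{\psi} \circ g)|_{\psi(\Fn)} = 0$, that is, $(Q \circ N \circ G)|_{Q(\Fn)} = 0$. For this I would invoke the identity $Q \circ N = 0$ recorded just before the statement: the map $N(x) = x^{(q^n-1)/(q-1)}$ is the norm $N_{\Fn/\F}$, so $N(\Fn) \subseteq \F$, and $Q$ annihilates $\F$ since $c^q = c$ for every $c \in \F$. Hence $Q \circ N \circ G \equiv 0$ on all of $\Fn$, and in particular on $Q(\Fn)$. Combined with the hypothesis $h(Q(\Fn)) \subseteq \F \setminus \{0\}$, every assumption of Corollary \ref{zero} is then met.

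It remains only to transcribe the conclusion. Corollary \ref{zero} yields that $\varphi$ permutes $\Fn$, that $\bar{f}(x) = h(x)\varphi(x)$ induces a bijection from $\psi(\Fn) = Q(\Fn)$ to $\bar{\psi}(\Fn) = Q(\Fn)$ (i.e. a permutation of $Q(\Fn)$), and that
$$
f^{-1}(x) = \dfrac{\varphi^{-1}\left(x - g\left( \dfrac{\varphi^{-1}\left(\bar{\psi}(x)\right)}{h\left(\bar{f}^{-1}(\bar{\psi}(x))\right)}\right)\right)}{h\left(\bar{f}^{-1}(\bar{\psi}(x))\right)}.
$$
Substituting $\bar{\psi} = Q$ and $g = N \circ G$ produces precisely the asserted formula. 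I expect no serious obstacle, as the whole argument reduces to the correct identification of the data together with the commutation $\varphi \circ Q = Q \circ \varphi$ and the vanishing $Q \circ N = 0$. The only point requiring a moment's care is recognizing $N$ as the norm map, so that its image lies in $\F$; this is exactly what forces $Q \circ N = 0$ and hence the degeneracy $(\bar{\psi} \circ g)|_{\psi(\Fn)} = 0$ needed to apply Corollary \ref{zero}.
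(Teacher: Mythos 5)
Your proposal is correct and matches the paper's own (one-line) justification: the paper states that since $q$-polynomials over $\F$ commute under composition and $Q \circ N = 0$, Corollary \ref{NQ} is a straightforward application of Corollary \ref{zero}, which is exactly your specialization $\psi = \bar{\psi} = Q$, $g = N \circ G$. Your verification of the hypotheses, including recognizing $N$ as the norm so that $N(\Fn) \subseteq \F$ and hence $Q \circ N = 0$, fills in precisely the details the paper leaves implicit.
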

 
 \begin{proof}[{\bf Proof of Corollary \ref{nice4}}]
 In this case $h = 1$, $g = \gamma G$, and $\psi = T$.
\end{proof}

Letting $G(x) = x$ in Corollary \ref{nice4}, the following is straightforward.

\begin{cor}[See Corollary 3.5, \cite{yuan}]\label{nice5}
  Let $\varphi \in \F[x]$ be a $q$-polynomial permuting $\Fn$, let $\gamma \in \Fn$, let $c = T(\gamma)$, and assume
 that
 $$
 f(x) = \varphi(x) + \gamma T(x)
 $$
 permutes $\Fn$. Then $c + \varphi(1) \neq 0$ and the inverse of $f$ on $\Fn$ is given by 
 $$
 f^{-1}(x) = \varphi^{-1} \left(x - \dfrac{\gamma T(x)}{c + \varphi(1)} \right).
 $$
 \end{cor}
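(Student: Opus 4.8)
The plan is to derive this as the special case $G(x) = x$ of Corollary \ref{nice4}, so that the bulk of the work is already done and the only remaining task is to evaluate the auxiliary objects of that corollary when $G$ is the identity. First I would set $G(x) = x$, so that $f(x) = \varphi(x) + \gamma G(T(x)) = \varphi(x) + \gamma T(x)$ matches the permutation in the statement, and I would record that the associated polynomial becomes $\bar{f}(x) = cG(x) + \varphi(1)x = (c + \varphi(1))x$.

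Next, since $f$ is assumed to permute $\Fn$, Corollary \ref{nice4} guarantees that $\bar{f}$ is a permutation of $\F$. As $\bar{f}$ is the $q$-polynomial $(c + \varphi(1))x$ over $\F$, and a $q$-polynomial $L$ over $\F$ satisfies $L(x) = L(1)x$ on $\F$, it permutes $\F$ precisely when its coefficient is nonzero; hence $c + \varphi(1) \neq 0$, which establishes the first assertion. Consequently $\bar{f}^{-1}(y) = y/(c + \varphi(1))$ for every $y \in \F$, and in particular, because $T(x) \in \F$ and $G$ is the identity, we obtain $G(\bar{f}^{-1}(T(x))) = \bar{f}^{-1}(T(x)) = T(x)/(c + \varphi(1))$.

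Finally I would substitute this into the inverse formula furnished by Corollary \ref{nice4},
$$
f^{-1}(x) = \varphi^{-1}\left(x - \gamma\, G\!\left(\bar{f}^{-1}(T(x))\right)\right),
$$
to conclude that $f^{-1}(x) = \varphi^{-1}\left(x - \gamma T(x)/(c + \varphi(1))\right)$, as claimed. There is no genuine obstacle here, since all the content is carried by Corollary \ref{nice4}; the only point demanding a moment's care is the identification $\bar{f}^{-1}(T(x)) = T(x)/(c + \varphi(1))$, which rests on the elementary fact that a $q$-polynomial over $\F$ acts on $\F$ as scalar multiplication by its value at $1$, together with $G$ being the identity map.
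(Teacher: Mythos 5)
Your proposal is correct and follows exactly the paper's route: the paper derives Corollary \ref{nice5} by setting $G(x) = x$ in Corollary \ref{nice4}, so that $\bar{f}(x) = (c + \varphi(1))x$, whence $c + \varphi(1) \neq 0$ and $\bar{f}^{-1}(T(x)) = T(x)/(c + \varphi(1))$, just as you argue. Your write-up merely makes explicit the ``straightforward'' details the paper leaves to the reader, including the observation that a $q$-polynomial over $\F$ acts on $\F$ as multiplication by its value at $1$.
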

 

 Some other consequences of Theorem \ref{inv0} are the following corollaries. 
 
 \begin{thm}[{\bf Theorem 5.12 (a), \cite{akbary}}]\label{5.12}
  Let $k \geq 2$ be an even integer, let $L(x) = a x^q + b x$, where $a,b \in \F$ such that $a \neq \pm b$. Then both $L$ and 
  $$
  f(x) = L(x) + Q(x)^k
  $$
  are permutations of $\mathbb{F}_{q^2}$.
 \end{thm}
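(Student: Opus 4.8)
The plan is to recognize $f$ as a special case of Corollary \ref{zero}. Taking $\varphi = L$, $\psi = \bar{\psi} = Q$, $h \equiv 1$, and $g(x) = x^k$, one has $h(Q(x))\varphi(x) + g(Q(x)) = L(x) + Q(x)^k = f(x)$ on the nose. Since the ``in particular'' clause of Corollary \ref{zero} (with $c = 1$) reduces the permutation property of $f$ to that of $\varphi = L$, the statement splits into two independent tasks: showing $L$ permutes $\mathbb{F}_{q^2}$, and checking that the hypotheses of Corollary \ref{zero} hold for this choice of data.

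First I would settle that $L$ permutes $\mathbb{F}_{q^2}$. Because $a, b \in \F$, the associate Dickson matrix of $L$ is $\left(\begin{smallmatrix} b & a \\ a & b \end{smallmatrix}\right)$, whose determinant equals $b^2 - a^2 = (b-a)(b+a)$; this is nonzero exactly because $a \neq \pm b$, so $L$ is a linearized permutation polynomial (a $q$-polynomial permutes if and only if its Dickson matrix is nonsingular).

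Next I would verify the hypotheses of Corollary \ref{zero}. The required commutation $\varphi \circ \psi = \bar{\psi} \circ \varphi$ is automatic, since $L$ and $Q$ are both $q$-polynomials over $\F$ and hence commute under composition, while $|\psi(\mathbb{F}_{q^2})| = |\bar{\psi}(\mathbb{F}_{q^2})|$ is trivial as $\psi = \bar{\psi}$. The substantive condition is $(\bar{\psi} \circ g)|_{\psi(\mathbb{F}_{q^2})} = 0$, i.e. $Q(y^k) = 0$ for every $y \in Q(\mathbb{F}_{q^2})$. Here I would use that any $y = x^q - x$ satisfies $y^q = x^{q^2} - x^q = x - x^q = -y$, so that $(y^k)^q = (-y)^k = y^k$ precisely because $k$ is even; hence $y^k \in \F$ and $Q(y^k) = (y^k)^q - y^k = 0$. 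Combined with $h \equiv 1$, which gives $h(Q(\mathbb{F}_{q^2})) = \{1\} \subseteq \F \setminus \{0\}$, every hypothesis is met, and Corollary \ref{zero} delivers that $f$ permutes $\mathbb{F}_{q^2}$ if and only if $L$ does.

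The main obstacle is the lone evenness computation $Q(y^k) = 0$ on $Q(\mathbb{F}_{q^2})$: this is the only step where the hypothesis that $k$ is even is used, and it is exactly what forces the nonlinear term $Q(x)^k$ to take values in $\F$ and to be annihilated by $\bar{\psi} = Q$, thereby preventing it from destroying bijectivity. Everything else is a formal check that the data $(\varphi, \psi, \bar{\psi}, g, h)$ fits the framework.
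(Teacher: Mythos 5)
Your proof is correct, and it is worth noting that the paper itself never proves Theorem \ref{5.12}: the statement is imported verbatim from Theorem 5.12(a) of \cite{akbary}, so your argument supplies a proof the paper omits, using only the paper's own machinery. Moreover, your choice of data is exactly the one the paper uses in the proof of the corollary that follows the theorem ($\varphi = L$, $\psi = \bar{\psi} = Q$, $h = 1$, $g(x) = x^k$), including the single substantive computation: there the paper also observes that $k \geq 2$ even gives $Q\left[(Q(x))^k\right] = 0$ on $\mathbb{F}_{q^2}$, which you establish via $y^q = -y$ for $y \in Q(\mathbb{F}_{q^2})$. Your supporting checks are sound: the commutation $L \circ Q = Q \circ L$ holds since $q$-polynomials over $\F$ form a commutative algebra under composition, and the Dickson matrix of $L$ is $\left(\begin{smallmatrix} b & a \\ a & b \end{smallmatrix}\right)$ (using $a, b \in \F$), whose determinant $b^2 - a^2$ is nonzero precisely when $a \neq \pm b$, so $L$ permutes $\mathbb{F}_{q^2}$ by the criterion the paper cites from \cite{lidl}. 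Your appeal to the ``in particular'' biconditional of Corollary \ref{zero} is legitimate: the forward direction is proved explicitly in the paper, and the converse (which is the direction you actually need) follows from Theorem \ref{thm0}, since $\ker(L) = \{0\}$ gives condition (i) and $L(Q(\mathbb{F}_{q^2})) = Q(L(\mathbb{F}_{q^2})) = Q(\mathbb{F}_{q^2})$ makes $\bar{f} = L$ a bijection of $Q(\mathbb{F}_{q^2})$ for condition (ii). In short, your route makes the paper self-contained on this point at essentially no extra cost.
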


 \begin{cor}
  Using the same notations as in Theorem \ref{5.12}, the inverse of $f$ on $\mathbb{F}_{q^2}$ is given by
  $$
  f^{-1}(x) = \dfrac{ax^q - bx}{a^2 - b^2} - \dfrac{1}{a+b}\left(\dfrac{Q(x)}{a - b} \right)^k.
  $$
 \end{cor}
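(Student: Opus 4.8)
The plan is to recognize $f$ as a special instance of Corollary \ref{zero} over $\mathbb{F}_{q^2}$ and then to carry out the resulting substitution explicitly. Since $L(x) = ax^q + bx$ and $Q(x) = x^q - x$ are both $q$-polynomials over $\F$, they commute under composition, so setting $\varphi = L$, $\psi = \bar{\psi} = Q$, $h = 1$ (hence $c = 1$), and $g(x) = x^k$, the hypotheses $\varphi \circ \psi = \bar{\psi} \circ \varphi$ and $|\psi(\mathbb{F}_{q^2})| = |\bar{\psi}(\mathbb{F}_{q^2})|$ hold trivially, and $f(x) = L(x) + Q(x)^k$ takes the form $h(\psi(x))\varphi(x) + g(\psi(x))$ required there. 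The fact that both $f$ and $L = \varphi$ are permutations is already furnished by Theorem \ref{5.12}.

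The one genuine verification is the condition $(\bar{\psi} \circ g)|_{\psi(\mathbb{F}_{q^2})} = 0$, and this rests on the single observation that drives the whole argument: for any $x \in \mathbb{F}_{q^2}$ one has $Q(x)^q = (x^q - x)^q = x^{q^2} - x^q = x - x^q = -Q(x)$. Consequently, since $k$ is even, every $y \in Q(\mathbb{F}_{q^2})$ satisfies $\bar{\psi}(g(y)) = Q(y^k) = (y^q)^k - y^k = (-y)^k - y^k = 0$, so the hypothesis holds; the same computation shows $Q(x)^k \in \F$, a fact I will reuse below. With $c = 1$, the ``in particular'' formula of Corollary \ref{zero} then gives
$$
f^{-1}(x) = L^{-1}\left(x - \bigl(L^{-1}(Q(x))\bigr)^k\right).
$$

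It remains to make this explicit. As recorded in the proof of Proposition \ref{prop1} (and checked directly using $a,b \in \F$ and $x^{q^2} = x$), $L^{-1}(x) = (ax^q - bx)/(a^2 - b^2)$; feeding in $Q(x)$ and again using $Q(x)^q = -Q(x)$ yields $L^{-1}(Q(x)) = -(a+b)Q(x)/(a^2 - b^2) = Q(x)/(b - a)$. Because $k$ is even, $\bigl(L^{-1}(Q(x))\bigr)^k = \bigl(Q(x)/(a-b)\bigr)^k$, and this quantity $w := \bigl(Q(x)/(a-b)\bigr)^k$ lies in $\F$ by the observation above, so $w^q = w$. Substituting into $L^{-1}(x - w) = (a(x-w)^q - b(x-w))/(a^2-b^2)$ and simplifying via $w^q = w$ collapses the $w$-terms to $-w/(a+b)$, producing exactly
$$
f^{-1}(x) = \dfrac{ax^q - bx}{a^2 - b^2} - \dfrac{1}{a+b}\left(\dfrac{Q(x)}{a-b}\right)^k.
$$

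The main obstacle is conceptual rather than computational: one must spot that $f$ fits the template of Corollary \ref{zero} with the \emph{same} additive polynomial $Q$ playing the roles of both $\psi$ and $\bar{\psi}$, and that the evenness of $k$ together with the identity $Q(x)^q = -Q(x)$ in $\mathbb{F}_{q^2}$ is precisely what forces $Q(x)^k \in \F$. This single identity both validates the hypothesis $(\bar{\psi} \circ g)|_{Q(\mathbb{F}_{q^2})} = 0$ and lets the final substitution collapse cleanly; the remaining manipulations (the form of $L^{-1}$ and the substitution) are routine linear algebra over $\mathbb{F}_{q^2}$.
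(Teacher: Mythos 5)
Your proof is correct and takes essentially the same route as the paper's: the paper substitutes $\psi = \bar{\psi} = Q$, $\varphi = L$, $h = 1$, $g(x) = x^k$ directly into Theorem \ref{inv0} (your use of the constant-$h$ case of Corollary \ref{zero} with $c = 1$ is just that theorem's specialization, proved in the paper from it), arriving at the identical intermediate formula $f^{-1}(x) = L^{-1}\left(x - \left(L^{-1}(Q(x))\right)^k\right)$. Your key observation that evenness of $k$ together with $Q(x)^q = -Q(x)$ forces $Q\left((Q(x))^k\right) = 0$ and $Q(x)^k \in \F$ is exactly the paper's corresponding step, and the final simplification via $L^{-1}(x) = (ax^q - bx)/(a^2 - b^2)$ and $L^{-1}(Q(x)) = Q(x)/(b-a)$ matches the paper's computation up to cosmetic rearrangement.
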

 
 \begin{proof}
  Here $\psi = Q$, $g(x) = x^k$, $\varphi = L$, $h = 1$, and $\bar{f}(x) = Q(x^k) + L(x)$ is a permutation of $Q(\Fn)$.
  The fact that $k \geq 2$ is even gives $Q[ (Q(x))^k] = 0$ for any $x \in \mathbb{F}_{q^2}$. Then $\bar{f}(Q(x)) = L(Q(x)) = Q(L(x))$; thus
  $\bar{f}^{-1}(Q(x)) = L^{-1}(Q(x))$. Then by Theorem \ref{inv0} we obtain
  $$
  f^{-1}(x) = L^{-1} \left( x - \left(  L^{-1}(Q(x))    \right)^k  \right).
  $$
  Note that since $a,b \in \F$ and $a \neq \pm b$, then $a^2 - b^2 \in \F^*$. The reader can check that $L^{-1}(x) = (ax^q - bx)/(a^2 - b^2)$ from which it follows that
  $L^{-1}(Q(x)) = Q(x)/(b - a)$. Then using the fact that $k$ is even we obtain
  \begin{equation*}
  \begin{split}
  L^{-1}\left( \left(L^{-1}(Q(x))  \right)^k  \right) &= L^{-1}\left( \left( \dfrac{Q(x)}{b - a}  \right)^k \right) = \dfrac{a\left(\dfrac{Q(x)}{b - a} \right)^{qk}- b \left(\dfrac{Q(x)}{b - a} \right)^{k} }{a^2 - b^2}\\
  &= \dfrac{a\left(\dfrac{Q(x)}{b - a} \right)^{k} - b \left(\dfrac{Q(x)}{b - a} \right)^{k} }{a^2 - b^2} \\
  &= \dfrac{1}{a+b}\left(\dfrac{Q(x)}{a - b} \right)^k,
  \end{split}
  \end{equation*}
  from which the result follows since $L^{-1}$ is additive.
 \end{proof}

 \begin{proof}[{\bf Proof of Corollary \ref{nice}}]
  We have $\psi = L_1$, $\varphi = L_2$ is a permutation of $\Fn$ and particularly of $L_1(\Fn)$ by Corollary 6.2 in \cite{ding}, $h = 1$, $g(x) = (G(x))^s$. From the proof 
  of Corollary 6.2 in \cite{yuan}, $L_1[(G(L_1(x)))^s] = 0$, and 
  $\bar{f} = L_2$ is a permutation of $L_1(\Fn)$. Now Theorem \ref{inv0} gives the result.
 \end{proof}

 The following is a direct consequence of the above and Corollary 6.2 in \cite{yuan}.
 
 \begin{cor}[See Corollary 6.3, \cite{yuan}]\label{nice2}
  Let $n$ and $k$ be positive integers such that $\gcd(n,k) = d > 1$, and let $s$ 
  be a positive integer with $s(q^k - 1) \equiv 0 \pmod{q^n - 1}$. Then
  $$
  f(x) = x + \left(x^{q^k} - x + \delta   \right)^s
  $$
  permutes $\Fn$ for any $\delta \in \Fn$, and the inverse of $f$ on $\Fn$ is given by
  $$
  f^{-1}(x) = x - \left(x^{q^k} - x + \delta   \right)^s.
  $$
 \end{cor}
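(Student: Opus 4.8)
The plan is to obtain this as a clean specialization of Corollary \ref{nice}. First I would set $L_2(x) = x$, $L_1(x) = x^{q^k} - x$, and $G(x) = x + \delta$, and check that these meet the hypotheses of Corollary \ref{nice}. Since $L_2 = \id$ is plainly a $q$-polynomial over $\F$ and a permutation of $\Fn$, the only nontrivial verification concerns $L_1$. Because $d = \gcd(n,k)$ divides $k$, and the map $x \mapsto x^{q^k}$ on $\Fn$ depends only on $k \bmod n$ (while $\gcd(n,k)$ is preserved under this reduction), I may assume $1 \le k < n$, so that $k/d$ is an integer with $1 \le k/d \le n/d - 1$. Hence $L_1(x) = x^{q^k} - x$ is genuinely of the prescribed $q^d$-polynomial shape $\sum_{i=0}^{n/d-1} a_i x^{q^{id}}$ with coefficients in $\F$, and $L_1(1) = 1 - 1 = 0$ as demanded. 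The degenerate case $k \equiv 0 \pmod n$ reduces $f$ to the translation $x + \delta^s$, whose inverse $x - \delta^s$ already agrees with the claimed formula.

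Next I would confirm that the displayed $f$ matches the template of Corollary \ref{nice}. With these choices, $G(L_1(x)) = L_1(x) + \delta = x^{q^k} - x + \delta$, so that
$$\left(G(L_1(x))\right)^s + L_2(x) = \left(x^{q^k} - x + \delta\right)^s + x = f(x).$$
The standing conditions $\gcd(n,k) = d > 1$ and $s(q^k - 1) \equiv 0 \pmod{q^n - 1}$ are exactly those required by Corollary \ref{nice}, so they transfer verbatim.

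The one ingredient not furnished by Corollary \ref{nice} is the permutation property itself, which Corollary \ref{nice} takes as a hypothesis. I would therefore invoke Corollary 6.2 of \cite{yuan}, which guarantees that $f(x) = x + (x^{q^k} - x + \delta)^s$ permutes $\Fn$ for every $\delta \in \Fn$ under precisely these hypotheses; this is the step carrying the real content, the rest being formal bookkeeping. Granting it, Corollary \ref{nice} applies and gives
$$f^{-1}(x) = L_2^{-1}\left(x - \left(G \circ L_2^{-1} \circ L_1(x)\right)^s\right).$$
Finally, substituting $L_2 = L_2^{-1} = \id$ collapses the composition to $G(L_1(x)) = x^{q^k} - x + \delta$, yielding
$$f^{-1}(x) = x - \left(x^{q^k} - x + \delta\right)^s,$$
which is the asserted inverse.
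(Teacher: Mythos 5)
Your proposal is correct and matches the paper's own proof, which likewise specializes Corollary \ref{nice} with $L_1(x) = x^{q^k} - x$, $G(x) = x + \delta$, $L_2(x) = x$ and relies on Corollary 6.2 of \cite{yuan} for the permutation property. Your extra care in checking that $L_1$ has the required $q^d$-polynomial shape (and handling $k \equiv 0 \pmod n$) goes beyond what the paper records but changes nothing in the argument.
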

 
 \begin{proof}
  This is Corollary \ref{nice} with $L_1(x) = x^{q^k} - x$, $G(x) = x + \delta$, $L_2(x) = x$.
  Since $L_2^{-1}(x) = x$, the result follows.
   \end{proof}

 \begin{thm}[{\bf Theorem 3.1, \cite{yuan}}]\label{thm1}
  Let $r \geq 1$ and $n \geq 1$ be positive integers. Let $\psi, L_1,\ldots, L_r \in \F[x]$ be $q$-polynomials,
  $g \in \Fn[x], h_1,\ldots, h_r \in \F[x]$ and $\delta_1,\ldots, \delta_r \in \Fn$ such that $\psi(\delta_i) \in \F$
  and $h_i(\psi(\Fn)) \subseteq \F$. Then
  $$
  f(x) = g(\psi(x)) + \sum_{i=1}^r (L_i(x) + \delta_i)h_i(\psi(x))
  $$
  is a permutation polynomial of $\Fn$ if and only if the following two conditions hold.
  
  (i) $\bar{f}(x) := \psi(g(x)) + \sum_{i=1}^r (L_i(x) + \psi(\delta_i) ) h_i(x)$ permutes $\psi(\Fn)$; and
  
  (ii) for any $y \in \psi(\Fn)$, $\varphi_y(x) := \sum_{i=1}^r L_i(x) h_i(y)$ permutes $\ker(\psi)$.
  \end{thm}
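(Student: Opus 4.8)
The plan is to apply the AGW-type criterion of Lemma \ref{i1} with the surjection $\psi$ placed in both the source and the target slots. Concretely, I would set $A = \bar{A} = \Fn$, $S = \bar{S} = \psi(\Fn)$, take $\lambda = \bar{\lambda} = \psi$ (which is surjective onto $\psi(\Fn)$), let $f$ be the candidate permutation, and let $\bar{f}$ be the induced map on $S$. The first task is to verify the intertwining relation $\psi \circ f = \bar{f} \circ \psi$. Expanding $\psi(f(x))$ and using that $\psi$ is additive, that $h_i(\psi(x)) \in \F$ (so it may be pulled through $\psi$ by $\F$-linearity, since $\psi$ is a $q$-polynomial over $\F$), and that $\psi \circ L_i = L_i \circ \psi$ (because $q$-polynomials over $\F$ commute under composition), one finds
$$\psi(f(x)) = \psi(g(\psi(x))) + \sum_{i=1}^r h_i(\psi(x))\bigl(L_i(\psi(x)) + \psi(\delta_i)\bigr) = \bar{f}(\psi(x)),$$
which is exactly the required identity. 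Along the way I would also record that $\bar{f}$ maps $\psi(\Fn)$ into itself: each summand lies in $\psi(\Fn)$ because $\psi(\Fn)$ is an $\F$-subspace, $L_i(\psi(\Fn)) \subseteq \psi(\Fn)$, $\psi(\delta_i) \in \psi(\Fn)$ trivially, and the coefficients $h_i(\psi(x))$ lie in $\F$.

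With the hypotheses of Lemma \ref{i1} in place (both index sets are equinumerous and $\lambda,\bar{\lambda}$ are surjective), $f$ permutes $\Fn$ if and only if $\bar{f}$ is a bijection of $\psi(\Fn)$ — which is precisely condition (i) — and $f$ is injective on each fiber $\psi^{-1}(s)$, $s \in \psi(\Fn)$. The core of the argument is to identify this fiberwise injectivity with condition (ii). Fixing $s \in \psi(\Fn)$ and a preimage $x_0$ with $\psi(x_0) = s$, the fiber is the coset $x_0 + \ker(\psi)$, and on it $h_i(\psi(x)) = h_i(s)$ is constant. Using the additivity of each $L_i$, for $z \in \ker(\psi)$ I would compute
$$f(x_0 + z) = \Bigl[g(s) + \sum_{i=1}^r (L_i(x_0) + \delta_i) h_i(s)\Bigr] + \sum_{i=1}^r L_i(z) h_i(s) = C_s + \varphi_s(z),$$
where $C_s$ depends only on $s$ and the fixed $x_0$. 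Hence $f$ is injective on $\psi^{-1}(s)$ if and only if $z \mapsto \varphi_s(z)$ is injective on $\ker(\psi)$.

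Finally I would check that $\varphi_s$ actually stabilizes $\ker(\psi)$: for $z \in \ker(\psi)$, $\psi(\varphi_s(z)) = \sum_{i=1}^r h_i(s)\, \psi(L_i(z)) = \sum_{i=1}^r h_i(s)\, L_i(\psi(z)) = 0$, again using the $\F$-linearity of $\psi$ together with the commutation $\psi \circ L_i = L_i \circ \psi$. Since $\ker(\psi)$ is finite, injectivity of $\varphi_s$ on $\ker(\psi)$ is equivalent to $\varphi_s$ permuting $\ker(\psi)$, and, ranging over all $s \in \psi(\Fn)$, this is exactly condition (ii). Combining this equivalence with Lemma \ref{i1} completes the proof. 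The hard part will be the bookkeeping in the intertwining relation and in the stabilization $\varphi_s(\ker(\psi)) \subseteq \ker(\psi)$; both hinge on the same two facts — that $h_i(\psi(\Fn)) \subseteq \F$ lets scalars pass through the $\F$-linear map $\psi$, and that $\psi$ commutes with each $L_i$ — so once these are isolated the rest is routine.
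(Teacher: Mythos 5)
Your argument is correct and is essentially the intended route: the paper quotes this statement from Theorem 3.1 of \cite{yuan} without reproving it, but the expected proof is precisely your application of the AGW-type criterion (Lemma \ref{i1}) with $A=\bar{A}=\Fn$, $S=\bar{S}=\psi(\Fn)$, $\lambda=\bar{\lambda}=\psi$, and the two facts you isolate --- that $\psi\circ L_i=L_i\circ\psi$ for $q$-polynomials over $\F$, and that $h_i(\psi(x))\in\F$ passes through the $\F$-linear map $\psi$ --- are exactly the ones the paper itself invokes at the start of its proof of Theorem \ref{inv1}, where the same intertwining and fiber computations produce $Y=\bar{f}(y)$ and $Z=\varphi_y(z)+g(y)+\sum_{i=1}^r\delta_i h_i(y)-\psi\left(g(y)+\sum_{i=1}^r\delta_i h_i(y)\right)$. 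One incidental observation: your proof never uses the hypothesis $\psi(\delta_i)\in\F$, which is indeed not needed for this equivalence as stated.
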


  \begin{thm}\label{inv1}
   Using the same notations as in Theorem \ref{thm1}, assume $f$ permutes $\Fn$, and let $\bar{f}^{-1}$
   be the inverse of $\bar{f}|_{\psi(\Fn)}$. Then if $x \in \Fn$ is such that 
   $\ker(\varphi_{\bar{f}^{-1}(\psi(x))}) \cap \psi(S_\psi) = \{0\}$, the preimage of $x$ under $f$ is given by
   \begin{equation*}
\begin{split}
 f^{-1}(x) =
  \bar{f}^{-1}(\psi(x)) +    \varphi_{\bar{f}^{-1}(\psi(x))}^{-1}|_{S_\psi} 
  \Biggr( x & -  \psi(x) - g\left(\bar{f}^{-1}(\psi(x))\right) - \sum_{i=1}^r \delta_i h_i\left(\bar{f}^{-1}(\psi(x))\right)\\
   & + \psi\Biggr( g\left(\bar{f}^{-1}(\psi(x))\right) + \sum_{i=1}^r \delta_i h_i\left(\bar{f}^{-1}(\psi(x))\right) \Biggr) \Biggr).
 \end{split}
 \end{equation*}
 Furthermore, if $x \in \Fn$ is such that $\ker(\varphi_{\bar{f}^{-1}(\psi(x))}) \cap \psi(\Fn) = \{0\}$, then $\varphi_{\bar{f}^{-1}(\psi(x))}$
 permutes $\Fn$, and the preimage of $x$ under $f$ is given by
 $$
 f^{-1}(x) = \varphi_{\bar{f}^{-1}(\psi(x))}^{-1} \left( x - g\left(\bar{f}^{-1}(\psi(x))\right) - \sum_{i=1}^r \delta_i h_i\left(\bar{f}^{-1}(\psi(x))\right)    \right).
 $$
  \end{thm}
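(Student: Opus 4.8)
The plan is to mirror the proof of Theorem \ref{inv0}, specializing Lemma \ref{i0} to the case $\bar{\psi} = \psi$ (here $\psi$ is a single $q$-polynomial over $\F$, so no auxiliary $\bar\psi$ is needed). Thus I would work with the injective map $\phi_\psi(x) = (\psi(x),\, x - \psi(x))$ from $\Fn$ onto $\psi(\Fn) \oplus S_\psi$ and reduce the computation of the preimage $f^{-1}(x)$ to inverting the bivariate bijection $F = \phi_\psi \circ f \circ \phi_\psi^{-1}$, via $f^{-1} = \phi_\psi^{-1} \circ F^{-1} \circ \phi_\psi$.

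The core is the explicit computation of $F$. Writing $y := \psi(x)$ and $z := x - \psi(x)$, I would compute the two coordinates $Y := \psi(f(x))$ and $Z := f(x) - \psi(f(x))$. The hypotheses $h_i(\psi(\Fn)) \subseteq \F$ and $\psi(\delta_i) \in \F$, together with the $\F$-linearity of $\psi$ and the commutativity $\psi \circ L_i = L_i \circ \psi$ of $q$-polynomials over $\F$, let me pull the scalars $h_i(y)$ through $\psi$ and replace $\psi(L_i(x))$ by $L_i(y)$. This collapses $Y$ to exactly $\bar{f}(y) = \psi(g(y)) + \sum_{i=1}^r (L_i(y) + \psi(\delta_i))h_i(y)$ from Theorem \ref{thm1}(i), and collapses $Z$ to $\varphi_y(z) + g(y) - \psi(g(y)) + \sum_{i=1}^r h_i(y)(\delta_i - \psi(\delta_i))$, where $\varphi_y(z) = \sum_{i=1}^r L_i(z)h_i(y)$ is the map in Theorem \ref{thm1}(ii). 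Hence $F(y,z)$ is explicitly determined.

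To invert $F$ pointwise I first recover $y = \bar{f}^{-1}(Y)$ (legitimate since $\bar f$ bijects $\psi(\Fn)$), then solve the second coordinate for $z$; this is where the invertibility of $\varphi_y$ on $S_\psi$ enters. Crucially, Theorem \ref{thm1}(ii) already guarantees $\varphi_y$ permutes $\ker(\psi)$, i.e. $\ker(\varphi_y) \cap \ker(\psi) = \{0\}$. Combining this with the pointwise hypothesis $\ker(\varphi_y) \cap \psi(S_\psi) = \{0\}$ (at $y = \bar f^{-1}(\psi(x))$) and the fact that $\varphi_y$ commutes with $\psi$, Lemma \ref{i2} yields that $\varphi_y$ bijects $S_\psi$. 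Inverting and substituting $Y = \psi(x)$, $Z = x - \psi(x)$, after rewriting $\psi(g(y)) + \sum_i h_i(y)\psi(\delta_i) = \psi\bigl(g(y) + \sum_i \delta_i h_i(y)\bigr)$, gives the first displayed formula.

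For the \emph{Furthermore} clause, the stronger pointwise hypothesis $\ker(\varphi_y) \cap \psi(\Fn) = \{0\}$, again together with $\ker(\varphi_y)\cap\ker(\psi) = \{0\}$ from Theorem \ref{thm1}(ii), lets me invoke Lemma \ref{corlem} to conclude that $\varphi_y$ permutes all of $\Fn$. Then $\varphi_y^{-1}$ is additive, so $f^{-1}(x) = y + z = \varphi_y^{-1}(\varphi_y(y) + W)$, where $W$ is the argument of $\varphi_y^{-1}|_{S_\psi}$ above; the identity $\varphi_y(y) + \psi\bigl(g(y) + \sum_i \delta_i h_i(y)\bigr) = \bar f(y) = \psi(x)$ causes the $\psi$-terms to cancel, collapsing the argument to $x - g(y) - \sum_i \delta_i h_i(y)$ and producing the second formula. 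I expect the main obstacle to be bookkeeping: checking that the two ``free'' non-intersection conditions coming from Theorem \ref{thm1}(ii) are precisely what is missing from the stated pointwise hypotheses so that Lemmas \ref{i2} and \ref{corlem} apply, and carrying out the cancellation in the \emph{Furthermore} step cleanly.
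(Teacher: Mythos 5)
Your proposal is correct and follows essentially the same route as the paper: specializing Lemma \ref{i0} with $\bar{\psi} = \psi$, computing $F(y,z) = (\bar{f}(y),\, \varphi_y(z) + g(y) + \sum_i \delta_i h_i(y) - \psi(g(y) + \sum_i \delta_i h_i(y)))$, invoking Lemma \ref{i2} for invertibility on $S_\psi$ (with $\ker(\varphi_y) \cap \ker(\psi) = \{0\}$ supplied by Theorem \ref{thm1}(ii)) and Lemma \ref{corlem} for the \emph{Furthermore} clause. Your cancellation via $\varphi_y(y) + \psi\bigl(g(y) + \sum_i \delta_i h_i(y)\bigr) = \bar{f}(y) = \psi(x)$ is exactly the computation the paper packages as its claim about $\bar{f}^{-1}(y)$ followed by additivity of $\varphi_y^{-1}$.
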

 
\begin{proof}
  As each $L_i$ is $q$-linear over $\F$ and each $h_i(y) \in \F$ for any $y \in \psi(\Fn)$, then 
  $\varphi_y$ is also $q$-linear over $\F$ for any such $y$. Then $\varphi_y \circ \psi = \psi \circ \varphi_y$.
  Of course, $L_i \circ \psi = \psi \circ L_i$ as well.
  Thus we apply Lemma \ref{i0} with $F = \phi_\psi \circ f \circ \phi_\psi^{-1}$. As before we let 
  $y := \psi(x)$ and $z := x - \psi(x)$. We have
\begin{align*}
  Y &:= \psi(f(x)) = \psi(g(y)) + \sum_{i=1}^r (L_i(y) + \psi(\delta_i))h_i(y) = \bar{f}(y) \text{; and}\\
  Z &:= f(x) - \psi(f(x)) = g(y) - \psi(g(y)) + \sum_{i=1}^r(L_i(z) + \delta_i - \psi(\delta_i))h_i(y)\\
   &= \varphi_y(z) + g(y) + \sum_{i=1}^r \delta_i h_i(y) - \psi \left(g(y) + \sum_{i=1}^r \delta_i h_i(y)   \right).
  \end{align*}
 Hence $F$ is defined by $F(y,z) = (Y,Z)$. If for $y = \bar{f}^{-1}(Y)$, $\ker(\varphi_y) \cap \psi(S_\psi) = \{0\}$,
 then $\varphi_y$ is invertible on $S_\psi$ by Lemma \ref{i2} since $\ker(\varphi_y) \cap \ker(\psi) = \{0\}$ additionally (because $f$ is a permutation). 
 For such $y$ we obtain
 $$
 z = \varphi_y^{-1}|_{S_\psi} \left( Z - g(y) - \sum_{i=1}^r \delta_i h_i(y) + \psi\left( g(y) + \sum_{i=1}^r \delta_i h_i(y)  \right)     \right).
 $$
 Hence
 \begin{equation*}
\begin{split}
 F^{-1}(Y,Z) =
  \Biggr(   \bar{f}^{-1}(Y),    \varphi_{\bar{f}^{-1}(Y)}^{-1}|_{S_\psi} 
  \Biggr( Z & - g\left(\bar{f}^{-1}(Y)\right) - \sum_{i=1}^r \delta_i h_i\left(\bar{f}^{-1}(Y)\right)\\
   & + \psi\Biggr( g\left(\bar{f}^{-1}(Y)\right) + \sum_{i=1}^r \delta_i h_i\left(\bar{f}^{-1}(Y)\right) \Biggr)  \Biggr) \Biggr).
  \end{split}
 \end{equation*}
 Since $f^{-1} = \phi_\psi^{-1} \circ F^{-1} \circ \phi_\psi$, letting $Y = \psi(x)$ and $Z = x - \psi(x)$, we obtain
 \begin{equation*}
\begin{split}
 f^{-1}(x) =
  \bar{f}^{-1}(\psi(x)) +    \varphi_{\bar{f}^{-1}(\psi(x))}^{-1}|_{S_\psi} 
  \Biggr( x & -  \psi(x) - g\left(\bar{f}^{-1}(\psi(x))\right) - \sum_{i=1}^r \delta_i h_i\left(\bar{f}^{-1}(\psi(x))\right)\\
   & + \psi\Biggr( g\left(\bar{f}^{-1}(\psi(x))\right) + \sum_{i=1}^r \delta_i h_i\left(\bar{f}^{-1}(\psi(x))\right) \Biggr) \Biggr)
 \end{split}
 \end{equation*}
 whenever $x \in \Fn$ is such that $\ker(\varphi_{\bar{f}^{-1}(\psi(x))}) \cap \psi(S_\psi) = \{0\}$.
 
 If $y \in \psi(\Fn)$ is such that $\varphi_{\bar{f}^{-1}(y)}$ permutes $\psi(\Fn)$, then $\varphi_{\bar{f}^{-1}(y)}$ permutes $\Fn$
 by Lemma \ref{corlem}. We claim that $\bar{f}^{-1}(y)$ satisfies
 $$
 \bar{f}^{-1}(y) = \varphi^{-1}_{\bar{f}^{-1}(y)} \left( y - \psi\left(g\left(\bar{f}^{-1}(y) \right) + \sum_{i=1}^r \delta_i h_i\left(\bar{f}^{-1}(y)   \right) \right)  \right).
 $$
 Indeed, 
 \begin{equation*}
  \begin{split}
  \bar{f}^{-1}(\bar{f}(y)) &= \varphi^{-1}_y \left( \bar{f}(y) - \psi\left(g(y) + \sum_{i=1}^r \delta_i h_i(y)  \right)  \right)\\
  &= \varphi^{-1}_y \left( \psi(g(y)) + \sum_{i=1}^r (L_i(y) + \psi(\delta_i) ) h_i(y) -  \psi\left(g(y) + \sum_{i=1}^r \delta_i h_i(y)  \right)  \right)\\
  &= \varphi^{-1}_y \left(\sum_{i=1}^r L_i(y) h_i(y)   \right)
  = \varphi_y^{-1}\left( \varphi_y(y) \right)\\
  &= y.
  \end{split}
  \end{equation*}
  Similarly one can show that $\bar{f}\left( \bar{f}^{-1}(y)  \right) = y$.
 Now using the fact that $\varphi^{-1}_{\bar{f}^{-1}(y)}$ is additive, we get the result.
 \end{proof}
 
 \begin{cor}[See Corollary 3.3, \cite{yuan}]\label{TcorYuan}
  Let $r \geq 1$ and $n \geq 1$ be positive integers. Let $L_1,\ldots, L_r \in \F[x]$ be $q$-polynomials,
  $g \in \Fn[x], h_1,\ldots, h_r \in \F[x]$, and $\delta_1,\ldots, \delta_r \in \Fn$ . If
  $$
  f(x) = g(T(x)) + \sum_{i=1}^r (L_i(x) + \delta_i)h_i(T(x))
  $$
  is a permutation of $\Fn$, then $\bar{f}(x) := T(g(x)) + \sum_{i=1}^r (L_i(1)x  + T(\delta_i))h_i(x)$ is a permutation of $\F$, and
  $\varphi_y(x) := \sum_{i=1}^r L_i(x) h_i(y)$ is a permutation of $\ker(T)$ for any $y \in \F$. 
  
  (i) If $p \mid n$ or $x \in \Fn$ is such that $\sum_{i=1}^r L_i(1) h_i(\bar{f}^{-1}(T(x))) \neq 0$ (equivalently, $\varphi_{\bar{f}^{-1}(T(x))}$ permutes $\F$), 
  then $\varphi_{\bar{f}^{-1}(T(x))}$ permutes $\Fn$, and the preimage of $x$ under $f$ is given by
  $$
  f^{-1}(x) = \varphi_{\bar{f}^{-1}(T(x))}^{-1} \left( x - g\left(\bar{f}^{-1}(T(x))\right) - \sum_{i=1}^r \delta_i h_i\left(\bar{f}^{-1}(T(x))\right)    \right).
  $$
  
  (ii) If $p \nmid n$, the inverse of $f$ on $\Fn$ is given by
  \begin{equation*}
\begin{split}
 f^{-1}(x) =
  n^{-1}\bar{f}^{-1}(T(x)) + \\  +  \varphi_{\bar{f}^{-1}(T(x))}^{-1}|_{\ker(T)}
 \Biggr( x & -  n^{-1}T(x) - g\left(\bar{f}^{-1}(T(x))\right) - \sum_{i=1}^r \delta_i h_i\left(\bar{f}^{-1}(T(x))\right)\\
   & + n^{-1}T\Biggr( g\left(\bar{f}^{-1}(T(x))\right) + \sum_{i=1}^r \delta_i h_i\left(\bar{f}^{-1}(T(x))\right) \Biggr) \Biggr).
 \end{split}
 \end{equation*}
  \end{cor}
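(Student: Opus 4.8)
The plan is to read this corollary as the specialization of Theorems~\ref{thm1} and~\ref{inv1} to the trace $\psi = T$, with one genuine twist in part (ii). First I would record the arithmetic of $T$ that drives everything: for $a \in \F$ one has $T(a) = \sum_{i=0}^{n-1} a^{q^i} = na$, so $T$ acts on $\F$ as multiplication by $n$, $T(\Fn) = \F$, and $T^2 = nT$. Since each $L_i$ is a $q$-polynomial over $\F$, on the line $\F$ one has $L_i(a) = L_i(1)a$; hence the reduced map $T(g(x)) + \sum_i (L_i(x) + T(\delta_i))h_i(x)$ furnished by Theorem~\ref{thm1} agrees on $\psi(\Fn) = \F$ with $T(g(x)) + \sum_i (L_i(1)x + T(\delta_i))h_i(x)$, i.e. with the $\bar f$ of the corollary. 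The hypotheses $T(\delta_i) \in \F$ and $h_i(\F) \subseteq \F$ are automatic, so Theorem~\ref{thm1} with $\psi = T$ applies verbatim and delivers the preamble: $\bar f$ permutes $\F$ and $\varphi_y(x) = \sum_i L_i(x)h_i(y)$ permutes $\ker(T)$ for every $y \in \F$.

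For part (i) I would invoke the ``furthermore'' clause of Theorem~\ref{inv1} with $\psi = T$, where $\psi(\Fn) = \F$ and the operative hypothesis is $\ker(\varphi_{\bar f^{-1}(T(x))}) \cap \F = \{0\}$; the task is to translate this into the two stated alternatives. Writing $y = \bar f^{-1}(T(x))$, the map $\varphi_y$ sends $\F$ into $\F$ and acts there as multiplication by $c_y := \sum_i L_i(1)h_i(y)$, so $\ker(\varphi_y) \cap \F = \{0\}$ if and only if $c_y \neq 0$, which is the same as $\varphi_y$ permuting $\F$. When instead $p \mid n$, the identity $T(a) = na$ forces $T(a) = 0$ for all $a \in \F$, so $\F \subseteq \ker(T)$; since $\varphi_y$ already permutes $\ker(T)$ we get $\ker(\varphi_y) \cap \F \subseteq \ker(\varphi_y) \cap \ker(T) = \{0\}$ for free. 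In either case $\varphi_{\bar f^{-1}(T(x))}$ permutes $\Fn$ and the preimage formula of Theorem~\ref{inv1} is exactly the one displayed.

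For part (ii) the naive choice $\psi = T$ does not yield a genuine compositional inverse, and recognizing why is the conceptual heart of the proof: $S_T$ is in general not $\ker(T)$ (when $p \nmid n$ but $n \not\equiv 1 \pmod p$, one checks $\id - T$ is injective, so $S_T = \Fn$), and then the condition $\ker(\varphi_{\,\cdot}) \cap \psi(S_\psi) = \{0\}$ of the first formula in Theorem~\ref{inv1} can fail for some $x$. The remedy is to run the machinery through the idempotent $\psi' := n^{-1}T$, which is idempotent precisely when $p \nmid n$ (Example~\ref{eg:IdempotentLinearEqn}), so that $S_{\psi'} = \ker(\psi') = \ker(T)$ and, crucially, $\psi'(S_{\psi'}) = \{0\}$. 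Using $T(x) = n\,\psi'(x)$ I would rewrite $f$ as $f(x) = g'(\psi'(x)) + \sum_i (L_i(x) + \delta_i)h_i'(\psi'(x))$ with $g'(u) = g(nu)$ and $h_i'(u) = h_i(nu) \in \F[x]$, which still meets every hypothesis of Theorems~\ref{thm1} and~\ref{inv1}. Since $\psi'(S_{\psi'}) = \{0\}$, the intersection condition $\ker(\varphi'_{\tilde f^{-1}(\psi'(x))}) \cap \psi'(S_{\psi'}) = \{0\}$ now holds for \emph{every} $x$, so the first formula of Theorem~\ref{inv1} produces a true compositional inverse on all of $\Fn$.

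The remaining work is the bookkeeping that matches this rewritten expression to the stated one, and this is the step I expect to be the only real obstacle, as one must keep the factors of $n$ and $n^{-1}$ in their correct places. Letting $\tilde f$ be the reduced map of Theorem~\ref{thm1} for $\psi'$, a direct computation gives $\tilde f(w) = n^{-1}\bar f(nw)$ on $\F$, hence $\tilde f^{-1}(\psi'(x)) = \tilde f^{-1}(n^{-1}T(x)) = n^{-1}\bar f^{-1}(T(x))$, which is the leading term. Moreover $h_i'(\tilde f^{-1}(\psi'(x))) = h_i(\bar f^{-1}(T(x)))$ and $g'(\tilde f^{-1}(\psi'(x))) = g(\bar f^{-1}(T(x)))$, so $\varphi'_{\tilde f^{-1}(\psi'(x))} = \varphi_{\bar f^{-1}(T(x))}$ and $S_{\psi'} = \ker(T)$; substituting these together with $\psi' = n^{-1}T$ into every term of the argument of $\varphi^{-1}|_{\ker(T)}$ reproduces the displayed formula of part (ii) exactly.
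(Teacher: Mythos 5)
Your proof is correct and takes essentially the same route as the paper: part (i) is the paper's argument in substance (the $p\mid n$ case via $\F \subseteq \ker(T)$, the other case via $\ker(\varphi_y)\cap\F = \{0\}$ iff $\sum_i L_i(1)h_i(y)\neq 0$, then the furthermore clause of Theorem \ref{inv1}), and part (ii) hinges on the paper's key maneuver of passing to the idempotent $n^{-1}T$, so that $S_{n^{-1}T} = \ker(T)$ and $(n^{-1}T)(S_{n^{-1}T}) = \{0\}$ makes the kernel condition of Theorem \ref{inv1} hold for every $x$. The only difference is bookkeeping: the paper threads the scalar through by setting $H(x) := f(n^{-1}x)$, with $\bar{H} = n^{-1}\bar{f}$, $\varphi_y^{(H)} = n^{-1}\varphi_y$ and $f^{-1}(x) = n^{-1}H^{-1}(x)$, whereas you absorb it into $g'(u) = g(nu)$ and $h_i'(u) = h_i(nu)$, leaving $f$, the $L_i$ and $\varphi_y$ untouched --- an equivalent and arguably slightly cleaner normalization.
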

  
  \begin{proof}
   That $\bar{f}, \varphi_y$, are permutations of $\F$ and $\ker(T)$, respectively, follows from Theorem \ref{thm1}.
   
   (i) If $p \mid n$, then $\F \subseteq \ker(T)$ since $T(c) = nc = 0$
   for any $c \in \F$. But since $\varphi_{y}$ permutes $\ker(T)$ for any $y \in \F$, and $\varphi_y(\F) \subseteq \F$, it follows that $\varphi_y$ permutes $\F$.
   Now Lemma \ref{corlem} implies $\varphi_{y}$ permutes $\Fn$ for any $y \in \F$. That $\varphi_y$ permuting $\F$ is equivalent with $\sum_{i=1}^r L_i(1) h_i(y) \neq 0$
   follows from the fact that each $L_i$ is a $q$-polynomial over $\F$. The rest follows from Theorem \ref{inv1}.
   
   (ii) If $p \nmid n$, then $\ker(T) = \{x - n^{-1}T(x) \mid x \in \Fn \}$; thus if we let $\psi = n^{-1}T$, we get $S_{\psi} = \ker(\psi) = \ker(T)$.
   Note that if we define the polynomial $H(x) := f(n^{-1} x)$, then we obtain an instance of Theorem \ref{thm1} with $\psi = n^{-1}T$,
   $\bar{H} = n^{-1}\bar{f}$ and 
   $\varphi_{y}^{(H)} = n^{-1}\varphi_y$, from which it follows that $\bar{H}^{-1}(n^{-1}T(x)) = \bar{f}^{-1}(T(x))$,
   and $\varphi_y^{-1(H)}|_{\ker(T)} = n \varphi_y^{-1}|_{\ker(T)}$. Now since $f(x) = H(nx)$, then $f^{-1}(x) = n^{-1} H^{-1}(x)$, and the result follows from Theorem \ref{inv1}.
  \end{proof}

\begin{cor}[See Corollary 3.2, \cite{yuan}]\label{key0}
  Let $L_1,L_2,L_3 : \Fn \rightarrow \Fn$ be $q$-polynomials over $\F$. Let $w \in \Fn[x]$ such that $w(L_3(\Fn)) \subseteq \F$, and assume that
  $$
  f(x) = L_1(x) + L_2(x) w(L_3(x))
  $$
  permutes $\Fn$. Let $\bar{f}(x) = L_1(x) + L_2(x) w(x)$, and for $y \in L_3(\Fn)$, let $\varphi_y(x) = L_1(x) + L_2(x) w(y)$.
  Then if
  $x \in \Fn$ is such that $\ker(\varphi_{\bar{f}^{-1}(L_3(x))}) \cap L_3(S_{L_3}) = \{0\}$, the preimage of $x$ under $f$ is given by 
  $$
  f^{-1}(x) = \bar{f}^{-1}(L_3(x)) + \varphi^{-1}_{\bar{f}^{-1}(L_3(x))}|_{S_{L_3}}
  \left(x - L_3(x)  \right).
  $$
  Furthermore, if $x \in \Fn$ is such that $\varphi_{\bar{f}^{-1}(L_3(x))}$ permutes $L_3(\Fn)$, then $\varphi_{\bar{f}^{-1}(L_3(x))}$ permutes $\Fn$, and the preimage of $x$ under $f$ is given by
  $$
  f^{-1}(x) = \varphi^{-1}_{\bar{f}^{-1}(L_3(x))}(x).
  $$
  \end{cor}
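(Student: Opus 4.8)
The plan is to recognize Corollary \ref{key0} as the specialization of Theorems \ref{thm1} and \ref{inv1} obtained by setting $r = 2$, $\psi = L_3$, $g = 0$, and $\delta_1 = \delta_2 = 0$, with the two linear pieces $L_1, L_2$ of the corollary playing the roles of the two polynomials $L_1, L_2$ in Theorem \ref{thm1}, and with coefficient functions $h_1 = 1$ (the constant polynomial) and $h_2 = w$. First I would verify that the hypotheses of Theorem \ref{thm1} hold under this dictionary: $\psi = L_3$ and $L_1, L_2$ are $q$-polynomials over $\F$ by assumption; the constants $\delta_i = 0$ trivially satisfy $\psi(\delta_i) = 0 \in \F$; and the requirement $h_i(\psi(\Fn)) \subseteq \F$ reduces to $\{1\} \subseteq \F$ for $h_1$ and to the hypothesis $w(L_3(\Fn)) \subseteq \F$ for $h_2$. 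Substituting the dictionary into the definition $f(x) = g(\psi(x)) + \sum_{i=1}^r (L_i(x) + \delta_i) h_i(\psi(x))$ recovers exactly $f(x) = L_1(x) + L_2(x) w(L_3(x))$, so the standing assumption that $f$ permutes $\Fn$ places us squarely in the setting of Theorem \ref{inv1}.

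Next I would simplify the auxiliary objects. Under the same dictionary the polynomial $\bar{f}$ from Theorem \ref{thm1} becomes $\bar{f}(x) = \psi(g(x)) + \sum_{i=1}^2 (L_i(x) + \psi(\delta_i))h_i(x) = L_1(x) + L_2(x)w(x)$, matching the corollary, and $\varphi_y$ becomes $\varphi_y(x) = \sum_{i=1}^2 L_i(x)h_i(y) = L_1(x) + L_2(x)w(y)$, again as stated. Feeding these, together with $g = 0$ and $\delta_1 = \delta_2 = 0$, into the first preimage formula of Theorem \ref{inv1} makes every $g$-term and every $\delta_i$-term vanish (including the outer $\psi(\cdots)$ applied to them), collapsing the long bracketed argument to simply $x - \psi(x) = x - L_3(x)$ and yielding the target
$$
f^{-1}(x) = \bar{f}^{-1}(L_3(x)) + \varphi^{-1}_{\bar{f}^{-1}(L_3(x))}|_{S_{L_3}}\left(x - L_3(x)\right).
$$
Moreover the validity condition $\ker(\varphi_{\bar{f}^{-1}(\psi(x))}) \cap \psi(S_\psi) = \{0\}$ of Theorem \ref{inv1} becomes, with $\psi = L_3$, exactly the stated hypothesis $\ker(\varphi_{\bar{f}^{-1}(L_3(x))}) \cap L_3(S_{L_3}) = \{0\}$.

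For the second assertion I would invoke the second preimage formula of Theorem \ref{inv1}, which under $g = 0$ and $\delta_i = 0$ reduces directly to $f^{-1}(x) = \varphi^{-1}_{\bar{f}^{-1}(L_3(x))}(x)$. The one point requiring care—and the step I expect to be the main, though modest, obstacle—is reconciling the two phrasings of the hypothesis: Theorem \ref{inv1} imposes $\ker(\varphi_{\bar{f}^{-1}(L_3(x))}) \cap \psi(\Fn) = \{0\}$, whereas the corollary asks that $\varphi_{\bar{f}^{-1}(L_3(x))}$ permute $L_3(\Fn)$. To bridge them I would note that each $\varphi_y$ is $q$-linear over $\F$ and hence commutes with $L_3 = \psi$, so $\varphi_y(L_3(\Fn)) \subseteq L_3(\Fn)$; consequently $\varphi_y$ permutes the finite set $L_3(\Fn)$ if and only if it is injective there, i.e. if and only if $\ker(\varphi_y) \cap L_3(\Fn) = \ker(\varphi_y) \cap \psi(\Fn) = \{0\}$. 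Since $\ker(\varphi_y) \cap \ker(\psi) = \{0\}$ already holds for the relevant $y$ (because $f$ permutes $\Fn$, via Theorem \ref{thm1}(ii)), applying Lemma \ref{corlem} with $\varphi = \varphi_y$ and $\psi = \bar{\psi} = L_3$ then confirms that $\varphi_{\bar{f}^{-1}(L_3(x))}$ in fact permutes all of $\Fn$, which justifies writing the unrestricted inverse $\varphi^{-1}_{\bar{f}^{-1}(L_3(x))}$ in the final formula.
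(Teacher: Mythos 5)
Your proposal is correct and takes essentially the same approach as the paper: the paper's proof is exactly this specialization of Theorem \ref{inv1} with $r=2$, $g=0$, $h_1=1$, $h_2=w$, $\delta_1=\delta_2=0$, $\psi=L_3$, and it handles the ``furthermore'' part the same way you do, combining $\ker(\varphi_y)\cap\ker(L_3)=\{0\}$ (which holds because $f$ permutes $\Fn$) with Lemma \ref{corlem} to upgrade the bijection of $L_3(\Fn)$ to a permutation of $\Fn$. The only cosmetic difference is that the paper extracts the final formula via the identity $\bar{f}(y)=\varphi_y(y)$, hence $\bar{f}^{-1}(y)=\varphi^{-1}_{\bar{f}^{-1}(y)}(y)$, whereas you read it off directly from the second preimage formula of Theorem \ref{inv1}, which internally rests on that same identity.
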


 \begin{proof}
  Here $r=2$, $g = 0$, $h_1 = 1$, $h_2 = w$, $\delta_1 = \delta_2 = 0$, and $\psi = L_3$. In the case of $\varphi_{\bar{f}^{-1}(y)}$ permuting $L_3(\Fn)$ for $y \in L_3(\Fn)$, it permutes $\Fn$
  since it additionally permutes $\ker(L_3)$ because $f$ is a permutation. Then since $\bar{f}(y) = \varphi_y(y)$, we get $\bar{f}^{-1}(y) = \varphi^{-1}_{\bar{f}^{-1}(y)}(y)$, from which the result follows.
 \end{proof}
 
 The proof of Theorem \ref{nice3} is essentially the same as the proof of Theorem \ref{key0}, where the linear equation in part (b) follows from Corollary \ref{cor:IdempotentLinearEqn}.

\begin{thm}[{\bf Theorem 3, \cite{coulter}}]\label{key1}
Let $q = p^m$ be a prime power, let $g \in \mathbb{F}_q[x]$, let $H \in \F[x]$ be additive, and let $f(x) = H(x) + xg(T(x))$.
Then $f(x)$ is a permutation polynomial of $\mathbb{F}_{q^n}$
if and only if the following two conditions hold.

(i) For any $y \in \mathbb{F}_q$ and any $x \in \mathbb{F}_{q^n}$ we have $\varphi_y(x) := H(x) + xg(y) = 0$ and $T(x) = 0$ if and only if $x=0$.

(ii) $\bar{f}(x) := H(x) + xg(x)$ is a permutation polynomial of $\mathbb{F}_q$.
 \end{thm}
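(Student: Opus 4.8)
The plan is to apply the AGW-type criterion of Lemma \ref{i1} with the trace map $T = T_{q^n|q}$ playing the role of the surjection onto the smaller set. Concretely, I would take $A = \bar{A} = \Fn$, $S = \bar{S} = \F$, and $\lambda = \bar{\lambda} = T$, which surjects onto $\F$. The first task is to verify the intertwining relation $T \circ f = \bar{f} \circ T$, where $\bar{f}(x) = H(x) + x g(x)$. Since $T$ is additive and $g(T(x)) \in \F$ for every $x \in \Fn$, the $\F$-linearity of $T$ gives $T(x\,g(T(x))) = g(T(x))\,T(x)$; and because $H$ is a $p$-polynomial over $\F$ while $T$ is a $q$-polynomial over $\mathbb{F}_p$, the two commute, so $T(H(x)) = H(T(x))$. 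Adding these yields $T(f(x)) = H(T(x)) + g(T(x))\,T(x) = \bar{f}(T(x))$, as needed.

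With the relation in hand, Lemma \ref{i1} applies directly (here $|A| = |\bar{A}| = q^n$, $|S| = |\bar{S}| = q$, and both $\lambda, \bar{\lambda}$ are surjective), giving that $f$ permutes $\Fn$ if and only if $\bar{f}$ permutes $\F$ and $f$ is injective on each fiber $T^{-1}(y)$, $y \in \F$. The first of these is precisely condition (ii) of the statement, so it remains only to match the fiber-injectivity with condition (i).

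The crucial observation is that although $f$ itself is not additive (so the group version of Lemma \ref{i1} does not apply), its restriction to a fiber is: for $x \in T^{-1}(y)$ one has $f(x) = H(x) + x g(y) = \varphi_y(x)$, and $\varphi_y$ is an additive ($p$-)polynomial because $g(y) \in \F$. As $T^{-1}(y)$ is a coset of $\ker(T)$, injectivity of $\varphi_y$ on $T^{-1}(y)$ is equivalent, by additivity, to the implication $\varphi_y(k) = 0 \Rightarrow k = 0$ for $k \in \ker(T)$, i.e.\ to $\ker(\varphi_y) \cap \ker(T) = \{0\}$. Quantifying over all $y \in \F$, this is exactly condition (i). Combining the two equivalences completes the proof.

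I expect the main obstacle to be the intertwining step: one must carefully exploit that the coefficients of $H$ lie in $\F$ (so that they are fixed under the $q$-power Frobenius) to obtain $T \circ H = H \circ T$, together with the fact that $g$ takes values in $\F$ on $\im(T)$ so that the nonlinear factor $g(T(x))$ can be pulled out of the trace. Once the diagram commutes, the remaining work --- recognizing that $f|_{T^{-1}(y)} = \varphi_y$ and converting coset-injectivity into the kernel-intersection condition --- is routine.
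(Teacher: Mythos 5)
Your proof is correct: the intertwining identity $T \circ f = \bar{f} \circ T$ holds exactly as you argue (the coefficients of $H$ and $g$ lie in $\F$, so $T \circ H = H \circ T$ and the scalar $g(T(x)) \in \F$ pulls out of the trace), Lemma \ref{i1} applies since $T$ surjects $\Fn$ onto $\F$, and converting injectivity of $f$ on the cosets $T^{-1}(y)$ of $\ker(T)$ into $\ker(\varphi_y) \cap \ker(T) = \{0\}$ via the additivity of $\varphi_y = f|_{T^{-1}(y)}$ is valid and recovers condition (i) verbatim. The paper does not reprove this statement --- it is quoted from \cite{coulter}, with the remark that it is a special case of Theorem \ref{nice3} --- but your argument via Lemma \ref{i1} is precisely the AGW-style fiber decomposition underlying the paper's proofs of the analogous criteria (Theorems \ref{thm1} and \ref{nice3}), so it matches the intended approach.
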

 
 \begin{cor}\label{cor:key1}
  Using the same notations as in Theorem \ref{key1}, we have that $\varphi_y(x) = H(x) + xg(y)$ is a permutation of $\ker(T)$, for any $y \in \F$. 
  
  (i) If $p \mid n$ or $x \in \Fn$ is such that $\varphi_{\bar{f}^{-1}(T(x))}$ permutes $\F$, then $\varphi_{\bar{f}^{-1}(T(x))}$
  permutes $\Fn$, and the preimage of $x$ under $f$ is given by
  $$
  f^{-1}(x) = \varphi^{-1}_{\bar{f}^{-1}(T(x))}(x).
  $$
  
  (ii) If $p \nmid n$, then the inverse of $f$ on $\Fn$ is given by
  $$
  f^{-1}(x) = n^{-1}\bar{f}^{-1}(T(x)) + \varphi^{-1}_{\bar{f}^{-1}(T(x))}|_{\ker(T)} \left(x - n^{-1}T(x)  \right).
  $$
  \end{cor}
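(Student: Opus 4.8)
The plan is to recognize $f(x)=H(x)+xg(T(x))$ as the instance of Theorem~\ref{inv1} (equivalently, of Corollary~\ref{TcorYuan}) obtained by setting $\psi=T$, $r=2$, together with $L_1=H,\ h_1=1,\ \delta_1=0$ and $L_2=\id,\ h_2=g,\ \delta_2=0$, and with the ``$g$'' of Theorem~\ref{thm1} equal to $0$. Under these substitutions the auxiliary polynomials become exactly $\bar{f}(x)=H(x)+xg(x)$ and $\varphi_y(x)=H(x)+xg(y)$, matching the statement (throughout I assume $f$ permutes $\Fn$, so that conditions (i) and (ii) of Theorem~\ref{key1} hold). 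The one point needing care is that Theorems~\ref{thm1} and~\ref{inv1} were stated for $q$-polynomials $L_i$ over $\F$, whereas here $L_1=H$ is only additive. However, the sole property of the $L_i$ used in those proofs is the commutation $\varphi_y\circ\psi=\psi\circ\varphi_y$, so I would first verify this directly: since $H$ is a $p$-polynomial over $\F$ and $T$ is a $q$-polynomial over $\mathbb{F}_p$, they commute (a property of $p$- and $q$-polynomials recorded in the preliminaries), and since $g(y)\in\F$ and $T$ is $\F$-linear we get $T(g(y)x)=g(y)T(x)$; hence $\varphi_y(T(x))=H(T(x))+g(y)T(x)=T(H(x))+T(g(y)x)=T(\varphi_y(x))$. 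In particular $\varphi_y(\ker(T))\subseteq\ker(T)$.

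With the commutativity in hand, the arguments of Theorem~\ref{inv1} and Corollary~\ref{TcorYuan} apply verbatim. First, the preliminary claim that $\varphi_y$ permutes $\ker(T)$ for each $y\in\F$ follows from condition~(i) of Theorem~\ref{key1}, which is exactly $\ker(\varphi_y)\cap\ker(T)=\{0\}$; combined with $\varphi_y(\ker(T))\subseteq\ker(T)$ and finiteness, injectivity gives bijectivity. For part~(i), when $p\mid n$ one has $\F\subseteq\ker(T)$ (as $T(c)=nc=0$ for $c\in\F$), and since $\varphi_y$ permutes $\ker(T)$ and $\varphi_y(\F)\subseteq\F$, it permutes $\F$; thus in either case of the hypothesis $\varphi_{\bar{f}^{-1}(T(x))}$ permutes both $\ker(T)$ and $T(\Fn)=\F$, so Lemma~\ref{corlem} gives that it permutes $\Fn$. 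The preimage formula $f^{-1}(x)=\varphi^{-1}_{\bar{f}^{-1}(T(x))}(x)$ is then the ``furthermore'' clause of Theorem~\ref{inv1} (as in the second part of Corollary~\ref{key0}) specialized to $g=0$ and $\delta_i=0$, using $\bar{f}(y)=\varphi_y(y)$.

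For part~(ii), with $p\nmid n$ I would pass to the idempotent $\psi=n^{-1}T$, for which $S_\psi=\ker(\psi)=\ker(T)$, and introduce the auxiliary permutation $\tilde{f}(x):=f(n^{-1}x)$. The key arithmetic point is that $p\nmid n$ forces $n^{-1}\in\mathbb{F}_p$, so the $\mathbb{F}_p$-linearity of $H$ yields $H(n^{-1}x)=n^{-1}H(x)$; hence $\tilde{f}(x)=n^{-1}H(x)+n^{-1}x\,g(n^{-1}T(x))$, which is the Theorem~\ref{inv1} instance with $\psi=n^{-1}T$, $\tilde{L}_1=n^{-1}H$, $\tilde{L}_2=n^{-1}\id$, $\tilde{h}_1=1$, $\tilde{h}_2=g$. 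One computes $\bar{\tilde{f}}=n^{-1}\bar{f}$ and $\tilde{\varphi}_y=n^{-1}\varphi_y$, whence $\bar{\tilde{f}}^{-1}(n^{-1}T(x))=\bar{f}^{-1}(T(x))$ and $\tilde{\varphi}^{-1}_y|_{\ker(T)}=n\,\varphi^{-1}_y|_{\ker(T)}$. Applying Theorem~\ref{inv1} to $\tilde{f}$ (now with an idempotent $\psi$, so the full compositional inverse is obtained) and then using $f^{-1}(x)=n^{-1}\tilde{f}^{-1}(x)$, the factors of $n$ cancel to give precisely the claimed formula.

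The main obstacle is bookkeeping rather than conceptual: one must confirm that replacing the $q$-polynomial hypothesis on $H$ by mere additivity does not disturb the decomposition of Lemma~\ref{i0}, which rests entirely on the commutation $\varphi_y\circ T=T\circ\varphi_y$ verified above, and one must correctly track the scaling by $n$ and $n^{-1}$ through the substitution $\tilde{f}(x)=f(n^{-1}x)$, in particular noting $n^{-1}\in\mathbb{F}_p$ so that $H$ commutes with multiplication by $n^{-1}$.
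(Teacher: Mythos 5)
Your proposal is correct and takes essentially the same route as the paper, whose proof simply cites Corollary \ref{TcorYuan} and Corollary \ref{key0} (both instances of Theorem \ref{inv1}): part (i) via the $p \mid n$ argument and the ``furthermore'' clause with $g=0$, $\delta_i=0$, and part (ii) via exactly the same rescaling $\tilde{f}(x)=f(n^{-1}x)$ to the idempotent $\psi = n^{-1}T$ used in the proof of Corollary \ref{TcorYuan} (ii). Your explicit check that $\varphi_y \circ T = T \circ \varphi_y$ for merely additive $H$ is a point the paper leaves implicit (it is absorbed by the preliminaries' observation that $p$-polynomials over $\F$ commute with $q$-polynomials over $\mathbb{F}_p$, and by the remark that this corollary is a special case of Theorem \ref{nice3}), and you fill it in correctly.
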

 
 \begin{proof}
  It follows directly from Corollary \ref{TcorYuan}, \ref{key0}, and Theorem \ref{key1}.
 \end{proof}

 Note that in Theorem \ref{key1} if we let $q$ be even, $n$ be odd, $H(x) = ax^2$, and $g(x) = L(x) + ax$, then we obtain
 the permutation in (3) of the Introduction. 
 In fact, Theorem \ref{key1} and Corollary \ref{cor:key1} are special cases of Theorem \ref{nice3}.

 \section{Compositional inverse of a general class}
 

 In this section we give the compositional inverse of a class of permutation polynomials generalizing that in (2) and (3) of the Introduction, the compositional inverses
 of which were given in \cite{coulter1} and \cite{wu}, respectively. First in Lemma \ref{lem1} we give a compositional inverse over $\ker(T)$ of a simple $p$-polynomial inducing a permutation
 of $\ker(T)$. Then, using this result, we give, in Theorem \ref{generalization} and Corollary \ref{cor:compositional_inverse}, the preimages and compositional inverse, respectively,
 of the more general class of permutation polynomials.
 Similarly as done in \cite{wu}, our result is left as an expression in terms of the inverse of $\bar{f}$ over the subspace $\F$. 
  
 \begin{lem}\label{lem1}
   Let $q = p^m$ be a power of a prime $p$, let $c \in \mathbb{F}_q^*$, let $P_c(x) = x^p + cx$,
   and for $0 \leq j\leq m-1$, let $ a_j = (-1)^j c^{-(p^{j+1} - 1)/(p-1)}$.
   Then $P_c$ permutes $\ker(T) = \{\beta^q - \beta \mid \beta \in \Fn \}$ if and only if 
   $c$ belongs to any of the following two cases, 
   for which a corresponding compositional inverse on $\ker(T)$ is given as follows.\\ 
  {\bf Case 1:} $c^{(p^m-1)/(p-1)} = (-1)^m$ and $p \nmid n$.
   
   For any $\delta \in \mathbb{F}_p$, 
   $$
   P_c^{-1}(x) = \dfrac{1}{n} \sum_{j=0}^{m-1}\sum_{k=0}^{n-1}a_j (\delta - k)x^{p^{km+j}}.
   $$
   \\
  {\bf Case 2:} $c^{n(p^m-1)/(p-1)} \neq (-1)^{mn}$. Equivalently, $P_c$ permutes $\Fn$.
   
   The inverse of $P_c$ on $\Fn$, and hence on $\ker(T)$, is given by
   $$
   P_c^{-1}(x) = \dfrac{1}{c^{n(p^m-1)/(p-1)} + (-1)^{mn-1}} \sum_{i=0}^{mn-1}(-1)^i c^{\sum_{j= i+1}^{mn-1} p^j} x^{p^i}. 
   $$
   \end{lem}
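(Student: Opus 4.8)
The plan is to split the argument into a \emph{permutation criterion} and a \emph{verification of each inverse formula}, and to do the latter by direct composition rather than through Dickson matrices. First I record that $P_c$ stabilizes $\ker(T)$: since $c\in\F$ and $T$ is $\F$-linear with $T(x^p)=T(x)^p$, one gets $T(P_c(x))=T(x)^p+cT(x)=P_c(T(x))$, so $T\circ P_c=P_c\circ T$ and $P_c(\ker T)\subseteq\ker T$. As $\ker T$ is finite, $P_c$ permutes $\ker T$ iff it is injective there, i.e.\ iff $\ker(P_c)\cap\ker(T)=\{0\}$. Now $P_c(x)=x(x^{p-1}+c)$, so its roots in $\Fn$ are $0$ together with the solutions of $x^{p-1}=-c$; since $P_c$ is $\mathbb{F}_p$-linear, if a nonzero root $r$ exists then $\ker(P_c)=\mathbb{F}_p\,r$ is $1$-dimensional over $\mathbb{F}_p$. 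Hence either $P_c$ has no nonzero root, and then permutes all of $\Fn$, or the criterion reduces to $T(r)\neq 0$.

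Next I finish the characterization. A nonzero root exists iff $-c$ is a $(p-1)$-th power in $\Fn^*$, i.e.\ iff $c^{(p^{mn}-1)/(p-1)}=(-1)^{mn}$; using $c\in\F$ this rewrites as $c^{n(p^m-1)/(p-1)}=(-1)^{mn}$, whose negation is exactly Case~2 (so Case~2 is precisely the permutation of $\Fn$). When a nonzero root $r$ exists, $r^{p-1}=-c\in\F$ gives $(r^q/r)^{p-1}=1$, so $r^q=\zeta r$ with $\zeta=r^{p^m-1}\in\mathbb{F}_p^*$; iterating yields $r^{q^i}=\zeta^i r$, and $r^{q^n}=r$ forces $\zeta^n=1$. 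Thus $T(r)=r\sum_{i=0}^{n-1}\zeta^i$, which vanishes when $\zeta\neq 1$ (geometric sum) and equals $nr$ when $\zeta=1$. Since $c=-r^{p-1}$ gives $c^{(p^m-1)/(p-1)}=(-1)^m r^{p^m-1}=(-1)^m\zeta$, the conditions $\zeta=1$ and $p\nmid n$ are exactly $c^{(p^m-1)/(p-1)}=(-1)^m$ and $p\nmid n$, i.e.\ Case~1. This establishes that $P_c$ permutes $\ker T$ iff Case~1 or Case~2 holds.

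For the inverse in Case~2 I would verify the formula directly instead of invoking the cofactor expression of Proposition~\ref{LinInv}. Writing the candidate as $D^{-1}\sum_{i=0}^{mn-1}(-1)^i c^{e_i}x^{p^i}$ with $e_i=\sum_{j=i+1}^{mn-1}p^j$ and $D=c^{(p^{mn}-1)/(p-1)}+(-1)^{mn-1}$, I substitute $(x^p+cx)^{p^i}=x^{p^{i+1}}+c^{p^i}x^{p^i}$, reindex the first summand, and use $x^{p^{mn}}=x$. Because $e_{i-1}=e_i+p^i$, every intermediate term cancels telescopically, leaving exactly $Dx$; dividing by $D$ (nonzero by Case~2) proves the identity on all of $\Fn$, hence on $\ker T$.

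The delicate case is Case~1, where $P_c^{-1}$ lives only on $\ker T$ and is not a unique polynomial, which is why the free parameter $\delta$ appears. I would verify it by composition as well: set $R(x)=n^{-1}\sum_{k,j}a_j(\delta-k)x^{p^{km+j}}$, substitute $P_c$, and use $c^{p^{km+j}}=c^{p^j}$ together with the recurrence $a_j c^{p^j}=-a_{j-1}$ for $1\le j\le m-1$, which forces every coefficient of $x^{p^l}$ with $l\not\equiv 0\pmod m$ to vanish. The surviving coefficients, those of $x^{q^k}=x^{p^{km}}$, collapse to one common constant precisely because the Case~1 hypothesis gives $a_{m-1}=(-1)^{m-1}(-1)^m=-1$; it is exactly the vanishing of $a_{m-1}+1$ that removes the $k$-dependence. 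The outcome is $R(P_c(x))=x-n^{-1}T(x)$ for all $x\in\Fn$, independent of $\delta$, so $R$ induces $P_c^{-1}$ on $\ker T$ (and $p\nmid n$ guarantees $n^{-1}$ exists). I expect this bookkeeping to be the main obstacle: arranging the off-diagonal coefficients to cancel and the diagonal ones to reassemble into a multiple of $T$, and confirming that the Case~1 condition is exactly what makes this work. An alternative that avoids the manipulation is to instead solve the linear system $\bar c D_{P_c}=v_{p,mn}(\id-n^{-1}T)$ of Corollary~\ref{cor:IdempotentLinearEqn}, as already carried out for $x^p+cx$ on $\ker T$ in Example~\ref{eg:IdempotentLinearEqn}.
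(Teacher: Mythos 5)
Your proposal is correct, and it reaches the permutation criterion by a genuinely different route than the paper. The paper never factors $P_c$: it proves Case 1 by exactly the direct composition you sketch (the cancellations $a_{j-1}+a_jc^{p^j}=0$ together with $a_{m-1}=-1$ collapse $P_c^{-1}(P_c(x))$ to $x-n^{-1}T(x)$, with $\delta$ dropping out), it rules out $p\mid n$ by noting $\F\subseteq\ker(T)$ and showing $P_c$ fails to permute $\F$ via the $m\times m$ Dickson determinant $c^{(p^m-1)/(p-1)}+(-1)^{m-1}=0$, and in Case 2 it reduces ``permutes $\ker(T)$'' to ``permutes $\Fn$'' through the commutation $P_c\circ T=T\circ P_c$ and Lemma \ref{corlem}, computes the $mn\times mn$ determinant $c^{n(p^m-1)/(p-1)}+(-1)^{mn-1}$, and then \emph{derives} the inverse from the cofactors of Proposition \ref{LinInv} before verifying it. You instead read everything off the factorization $P_c(x)=x(x^{p-1}+c)$: the norm condition $c^{n(p^m-1)/(p-1)}=(-1)^{mn}$ for a nonzero kernel element $r$, the relation $r^q=\zeta r$ with $\zeta\in\mathbb{F}_p^*$, $\zeta^n=1$, and $T(r)=r\sum_{i}\zeta^i$ give a unified, elementary explanation of both boundary conditions and exhibit the precise kernel vector obstructing injectivity on $\ker(T)$ (your identification $c^{(p^m-1)/(p-1)}=(-1)^m\zeta$ is the right bridge, and your sign bookkeeping $(p^m-1)/(p-1)\equiv m\pmod 2$ for odd $p$ is sound, with $p=2$ degenerate as it should be). This buys you independence from Dickson determinants and from Lemma \ref{corlem} altogether; what you give up is a derivation of the Case 2 formula, since your telescoping only confirms an expression handed to you -- but as the formula is part of the statement, one-sided verification $R\circ P_c=\id$ on the finite, $P_c$-stable sets $\Fn$ and $\ker(T)$ genuinely suffices, just as in the paper. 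Your closing alternative via Corollary \ref{cor:IdempotentLinearEqn} and Example \ref{eg:IdempotentLinearEqn} is exactly the observation the paper itself makes in the remark following the lemma.
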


  \begin{proof} 
  {\bf Case 1:} Assuming that $c^{(p^m-1)/(p-1)} = (-1)^m$, $p \nmid n$, and using the facts that $a_jc^{p^j} = -a_{j-1}$ for $j > 0$, $a_{m-1} = -1$, and $a_0 = c^{-1}$, we get 
   \begin{eqnarray*}
    P_c^{-1}\left(P_c(x) \right) 
    &=& n^{-1}\sum_{j=0}^{m-1}\sum_{k=0}^{n-1}a_j (\delta - k)\left(x^p + cx \right)^{p^{km+j}}\\
    &=& n^{-1}\sum_{j=0}^{m-1}\sum_{k=0}^{n-1}a_j (\delta - k)\left(x^{p^{km+j+1}} + c^{p^j}x^{p^{km+j}} \right)\\
    &=& n^{-1} \sum_{j=1}^{m}\sum_{k=0}^{n-1}a_{j-1}(\delta - k)x^{p^{km+j}} + n^{-1} \sum_{j=0}^{m-1}\sum_{k=0}^{n-1}a_{j}(\delta - k)c^{p^j}x^{p^{km+j}}\\
    &=& n^{-1}\sum_{k=0}^{n-1}(k- \delta)x^{p^{(k+1)m}} + n^{-1}\sum_{k=0}^{n-1}(\delta - k)x^{p^{km}}\\
    && \ \ \ \ + n^{-1}\sum_{j=1}^{m-1}\sum_{k=0}^{n-1}(\delta - k)(a_{j-1} + a_j c^{p^j})x^{p^{km+j}}\\
    &=& n^{-1}\sum_{k=1}^{n}(k-1-\delta)x^{p^{km}} + n^{-1}\sum_{k=0}^{n-1}(\delta - k)x^{p^{km}}\\
    &=& n^{-1}(n-1)x - n^{-1} \sum_{k=1}^{n-1}x^{p^{km}}\\
    &=& n^{-1}(n-1)x + n^{-1}(x - T(x))\\
    &=& x - n^{-1} T(x)\\
    &=& x,
    \end{eqnarray*}
    for all $x \in \ker(T)$. Similarly, using the fact that $ca_j = -a_{j-1}^p$ for $j > 0$, one can show that $P_c(P_c^{-1}(x)) = x$ for all $x \in \ker(T)$.
    Note that $P^{-1}_c$ must be unique upon reduction modulo $T$; we may set for instance $\delta = 0$. Moreover, the existence of the inverse $P_c^{-1}$ on $\ker(T)$
    is equivalent with $P_c$ permuting $\ker(T)$ as maps are invertible on a set if and only if they induce bijections of that set.
   
   In the case when $p \mid n$, we show that $P_c$ does not permute $\ker(T)$ for any such $c$. Note that $T(k) = nk = 0$ for any $k \in \F$;
   thus $\F \subseteq \ker(T)$. Then since $P_c(\F) \subseteq \F$ additionally, it suffices to show that $P_c$ does not permute $\F$. To do this we show that the determinant of 
   $m \times m$ associate Dickson matrix, $D_{P_c}$, is zero. Indeed, 
   $$
   D_{P_c} = \left( \begin{array}{cccccc}
                 c & 1 & 0 & 0 &\cdots & 0\\
                 0 & c^p & 1 & 0 & \cdots & 0\\
                 \vdots & \vdots & & & & \vdots\\
                1 & 0 & 0 & 0 & \cdots & c^{p^{m-1}}
                \end{array} \right); 
   $$
  thus, applying the cofactor expansion in the first column, we get $\det(D_{P_c}) = c^{(p^m-1)/(p-1)} + (-1)^{m-1} = 0$, by assumption. 
  This completes the proof of Case 1. Note that we have also obtained that
  $P_c$ is a permutation of $\F$ if and only if $c^{(p^m-1)/(p-1)} \neq (-1)^{m}$.
   
  
  {\bf Case 2:} Otherwise assume $c^{(p^m-1)/(p-1)} \neq (-1)^m$. We show that $P_c$ is a permutation of 
  $\ker(T)$ if and only if it is a permutation of $\Fn$ which happens if and only if $c^{n(p^m-1)/(p-1)} \neq (-1)^{mn}$. 
  Now, from the proof of Case 1 we know that in this case $P_c$ is a permutation of $\F$. Since 
  $p$-polynomials over $\F$ commute with $q$-polynomials over $\mathbb{F}_p$, we have $P_c \circ T = T \circ P_c$.
  Thus we can apply Lemma \ref{corlem} to obtain that $P_c$ is a permutation of $\ker(T)$ if and only if
  it permutes $\Fn$ (since $T(\Fn) = \F$ additionally). Thus, similarly as done in Case 1, we compute the determinant of the $mn \times mn$ associate Dickson matrix. We have
  $$
   D_{P_c} = \left( \begin{array}{cccccc}
                 c & 1 & 0 & 0 &\cdots & 0\\
                 0 & c^p & 1 & 0 & \cdots & 0\\
                 \vdots & \vdots & & & & \vdots\\
                1 & 0 & 0 & 0 & \cdots & c^{p^{mn-1}}
                \end{array} \right); 
   $$
   hence, using the fact that $c \in \F$, we get $\det(D_{P_c}) = c^{\sum_{i=0}^{mn-1} p^i} + (-1)^{mn-1} = c^{n(p^m-1)/(p-1)} + (-1)^{mn-1}$. 
   Therefore $P_c$ is a permutation of $\Fn$ if and only if $c^{n(p^m-1)/(p-1)} \neq (-1)^{mn}$.
   Here we will use Proposition \ref{LinInv} to obtain an expression for $P_c^{-1}$. 
   For this we need the $(i,0)$-th cofactors, $0 \leq i \leq mn-1$, of $D_{P_c}$, which are given by $\bar{a}_i = (-1)^i c^{\sum_{j = i+1}^{mn-1} p^j }$.
   Now Proposition \ref{LinInv} gives the result. Indeed,
   \begin{equation*}
    \begin{split}
     P_c^{-1}\left(P_c(x)\right) &= \sum_{i=0}^{mn-1} \dfrac{(-1)^i c^{\sum_{j= i+1}^{mn-1} p^j} \left(x^p + cx  \right)^{p^i}}{c^{n(p^m-1)/(p-1)} + (-1)^{mn-1}} \\
     &= \sum_{i=0}^{mn-1} \dfrac{(-1)^i c^{\sum_{j= i+1}^{mn-1} p^j} \left(x^{p^{i+1}} + c^{p^i}x^{p^i}  \right)}{c^{n(p^m-1)/(p-1)} + (-1)^{mn-1}}\\
     &= \sum_{i=1}^{mn} \dfrac{(-1)^{i-1} c^{\sum_{j= i}^{mn-1} p^j} x^{p^{i}} }{c^{n(p^m-1)/(p-1)} + (-1)^{mn-1}} + \sum_{i=0}^{mn-1} \dfrac{(-1)^i c ^{\sum_{j=i}^{mn-1} p^j} x^{p^i} } {c^{n(p^m-1)/(p-1)} + (-1)^{mn-1}}\\      
     &= x + \sum_{i=1}^{mn-1} \dfrac{\left( (-1)^{i-1} c^{\sum_{j= i}^{mn-1} p^j} + (-1)^{i} c^{\sum_{j= i}^{mn-1} p^j}  \right) x^{p^i} }{c^{n(p^m-1)/(p-1)} + (-1)^{mn-1}}\\
     &= x.
    \end{split}
    \end{equation*}
    Similarly one can show that $P_c(P_c^{-1}(x)) = x$. At this point we have exhausted all the possibilities for $c$ and so the proof is complete.
    \end{proof}

    \begin{rmk}
    Of course one can use Theorem \ref{thm:LinearizedInverse} to obtain the result in Case 1, 
    as we have done in Example \ref{eg:IdempotentLinearEqn} with $\delta = n-1$ there.
    As a way of comparison, and in the style of Remark 3.4 in \cite{wu}, here we give an explanation of the ``direct" method the authors used prior to obtaining the aforementioned results.
     First note that $P_c(\ker(T)) \subseteq \ker(T)$. As the set of $p$-polynomials over $\F$ permutating a subspace of $\Fn$ forms a group under composition,
  assume $P_c^{-1}(x) = \gamma \sum_{i=0}^{mn-1} d_i x^{p^i}$ for some $\gamma \in \mathbb{F}_p^*$ and some elements $d_i \in \F$, whenever $P_c$ is invertible on $\ker(T)$.
   Write 
  $
  P_c^{-1}(x) = \gamma \sum_{j=0}^{m-1} \sum_{k=0}^{n-1} d_{km + j} x^{p^{km+j}}.
  $
 For any $x \in \ker(T)$, and using the fact that $x - T(x) = -x^{p^m} -x^{p^{2m}} - \cdots - x^{p^{(n-1)m}}$, assume 
  \begin{align*}
 P_c^{-1}(P_c(x)) 
 &= 
 \gamma \sum_{j=0}^{m-1} \sum_{k=0}^{n-1} d_{km + j} \left( x^p + cx \right)^{p^{km+j}} \\
&= 
\gamma   (d_{nm-1} + d_0 c)x + \gamma \sum_{k=1}^{n-1}(d_{km-1} + d_{km} c)x^{p^{km}}
 + \gamma \sum_{j=1}^{m-1}\sum_{k=0}^{n-1}\left(d_{km+j-1} + d_{km+j}c^{p^j}\right)x^{p^{km+j}} \\
&=
\gamma (d_{nm-1} + d_0 c + 1)x - \gamma T(x)\\
&=
\gamma (d_{nm-1} + d_0 c + 1)x \\
&=
x.
\end{align*}
We must have $d_{nm-1} + d_0 c + 1 \neq 0$ and the system of equations
$$
\begin{cases} d_{i-1} + d_{i}c^{p^i} = 0,\  if \ m \nmid i; \\ d_{i-1} + d_{i} c = -1,\ if \ m \mid i, \end{cases}
$$
for $0 < i < mn$, with solution given by $d_{km+j} = a_j c d_{km}  = a_j b_k$, where
$$
b_k := (-a_{m-1})^k \delta - \sum_{l=0}^{k-1}(-a_{m-1})^l
$$
with $\delta := c d_0$ to be determined, for $0 \leq j \leq m-1$ and $0 \leq k \leq n-1$.
Note that if $\delta \in \mathbb{F}_p$, then each $b_k \in \mathbb{F}_p$ 
as $a_{m-1} = -N_{q|p}(-c^{-1}) \in \mathbb{F}_p$, 
where $N_{q|p}: \F \rightarrow \mathbb{F}_p$ is the absolute {\em norm} function given by $N_{q|p}(y) = y^{(q-1)/(p-1)}$.
Hence if $d_{nm-1} + d_0 c + 1 \neq 0$, we set $\gamma = (d_{nm-1} + d_0 c + 1)^{-1} = (a_{m-1}b_{n-1} + \delta + 1)^{-1} \in \mathbb{F}_p^*$.
It is left to the reader to check that $P_c^{-1}(P_c(x)) = P_c(P^{-1}_c(x)) = x$ on $\ker(T)$ if $\delta \in \mathbb{F}_p$ 
and $a_{m-1} b_{n-1} + \delta + 1 \neq 0$,
in which case the inverse of $P_c$ on $\ker(T)$ is given by
$$
P_c^{-1}(x) = \left(a_{m-1} b_{n-1} + \delta + 1  \right)^{-1} \sum_{j=0}^{m-1} \sum_{k=0}^{n-1} a_j b_k x^{p^{km + j}}.
$$
Now for Case 1, if we assume $c^{(p^m-1)/(p-1)} = (-1)^m$, then $a_{m-1} = -1$; hence $b_k = \delta - k$ and $a_{m-1}b_{n-1} + \delta + 1= -(\delta - n + 1) + \delta + 1= n \neq 0$
   since $p \nmid n$ by assumption. 
    \end{rmk}

  \begin{cor}[{\bf Lemma 3.3, \cite{wu}}]\label{cor:lem1}
   Let $q = 2^m$ and $n$ be odd. Let $P_c(x) = x^2 + cx$ for any $c \in \mathbb{F}_q^*$. Then $P_c$ can induce a permutation of $\ker(T)$ and one of 
   the polynomials that can induce its inverse map is
   $$
   P_c^{-1}(x) = \sum_{j=0}^{m-1}c^{-(2^{j+1}-1)}\left(\sum_{k=0}^{\frac{n-1}{2}} x^{q^{2k}}   \right)^{2^j}.
   $$
  \end{cor}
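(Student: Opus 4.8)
The plan is to obtain this as a direct specialization of Lemma \ref{lem1}, Case 1, to characteristic $2$. First I would verify that the hypotheses of the corollary place us in Case 1 of Lemma \ref{lem1} with $p = 2$. Since $(p^m - 1)/(p-1) = 2^m - 1 = q-1$ and $(-1)^m = 1$ in characteristic $2$, the condition $c^{(p^m-1)/(p-1)} = (-1)^m$ reads $c^{q-1} = 1$, which holds for every $c \in \mathbb{F}_q^*$; and the condition $p \nmid n$ is exactly the hypothesis that $n$ is odd. Hence $P_c$ does permute $\ker(T)$ and the Case 1 inverse formula applies. (The phrase ``one of the polynomials'' reflects that the inverse map on $\ker(T)$ has many polynomial representatives, parametrized in Lemma \ref{lem1} by the free choice of $\delta \in \mathbb{F}_p$.)

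Next I would carry out the characteristic-$2$ simplifications in the Case 1 expression
$$
P_c^{-1}(x) = \frac{1}{n}\sum_{j=0}^{m-1}\sum_{k=0}^{n-1} a_j(\delta - k)\, x^{p^{km+j}}.
$$
With $p = 2$ we have $a_j = c^{-(2^{j+1}-1)}$; since $n$ is odd, $n^{-1} = 1$ in $\mathbb{F}_2 \subseteq \mathbb{F}_q$; and $p^{km+j} = q^k 2^j$, so $x^{p^{km+j}} = \bigl(x^{q^k}\bigr)^{2^j}$. The one genuine choice is the free parameter $\delta \in \mathbb{F}_p = \mathbb{F}_2$: I would take $\delta = 1$, so that the coefficient $\delta - k \equiv 1 - k \pmod 2$ equals $1$ when $k$ is even and $0$ when $k$ is odd. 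This annihilates precisely the odd-indexed summands.

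Finally I would reindex the surviving even indices. Since $n$ is odd, $n-1$ is even, so the even $k \in \{0, 2, \ldots, n-1\}$ may be written as $k = 2\ell$ with $0 \le \ell \le (n-1)/2$, and the formula becomes
$$
P_c^{-1}(x) = \sum_{j=0}^{m-1} c^{-(2^{j+1}-1)} \sum_{\ell=0}^{(n-1)/2} \bigl( x^{q^{2\ell}} \bigr)^{2^j}.
$$
Using the additivity of the Frobenius $x \mapsto x^2$ over $\mathbb{F}_q$ to pull the $2^j$-th power outside the inner sum then yields the stated expression. The argument is routine once Case 1 is invoked; the only point requiring care is recognizing that the correct choice $\delta = 1$ is what isolates the even-indexed terms, together with keeping the characteristic-$2$ identities ($-1 = 1$, $(-1)^m = 1$, $n^{-1} = 1$) straight throughout.
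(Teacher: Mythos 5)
Your proposal is correct and is exactly the paper's argument: the paper's proof is the one-line observation that, since $2 \nmid n$ and $c^{2^m-1}=1$, the corollary is Case 1 of Lemma \ref{lem1} with $\delta = 1$. Your write-up merely fills in the routine characteristic-$2$ simplifications (sign collapse in $a_j$, $n^{-1}=1$, the parity filter $\delta - k \bmod 2$, reindexing even $k = 2\ell$, and Frobenius additivity) that the paper leaves implicit.
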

  
  \begin{proof}
   As $2 \nmid n$ and $c^{2^m-1} = 1$, this is Case 1 of Lemma \ref{lem1} with $\delta = 1$.
  \end{proof}
  
 Denote by $N_{q|p} : \F \rightarrow \mathbb{F}_p$ the absolute {\em norm} function given by $N_{q|p}(y) = y^{(q-1)/(p-1)}$.
 
   \begin{thm}\label{generalization}
  Let $q = p^m$ be a power of a prime $p$, let $n$ be a positive integer, let $a \in \F^*$, and assume that $g \in \F[x]$ is such that $\bar{f}(x) := ax^p + x g(x)$ induces a permutation of 
  $\F$ and $\varphi_y(x) := ax^p + xg(y)$ induces a permutation of $\ker(T) = \{ \beta^q - \beta \mid \beta \in \Fn \}$, for each $y \in \F$. Then 
  $$f(x) := ax^p + x g(T(x))$$ 
  induces a permutation of $\Fn$. Let $\bar{f}^{-1}$ be the inverse of the permutation $\bar{f}|_{\F}$.
  
  (a) If $x \in \Fn$ is such that $g(\bar{f}^{-1}(T(x))) = 0$, the preimage of $x$ under $f$ is given by
  $$
  f^{-1}(x) = \left( \dfrac{x}{a} \right)^{q^n/p}.
  $$
  
  Otherwise, if $p \mid n$ or $x \in \Fn$ is such that $\varphi_{\bar{f}^{-1}(T(x))}$ permutes $\F$ (equivalently,\\
  $N_{q|p}(g(\bar{f}^{-1}(T(x)))/a) \neq (-1)^m$), then $\varphi_{\bar{f}^{-1}(T(x))}$ permutes $\Fn$ and
  the preimage of $x$ under $f$ is given by
  $$
  f^{-1}(x) = \sum_{i=0}^{mn-1} \dfrac{(-1)^i a^{(p^i-1)/(p-1)} g\left(\bar{f}^{-1}\left(T(x)\right)\right) ^{\sum_{j=i+1}^{mn-1} p^j}}{N_{q|p}\left( g\left(\bar{f}^{-1}\left(T(x)\right)\right)^n\right) - N_{q|p}\left( (-a)^{n} \right)} x^{p^i}.
  $$
   
   (b) Otherwise (if $p \nmid n$ and $x \in \Fn$ is such that $N_{q|p}(g(\bar{f}^{-1}(T(x)))/a) = (-1)^m$), the preimage of $x$ under $f$ is given by
  $$
  f^{-1}(x) = n^{-1} \left( \bar{f}^{-1}(T(x)) -\sum_{j=0}^{m-1}\dfrac{(-1)^j a^{(p^j-1)/(p-1)}}{g\left( \bar{f}^{-1}(T(x)) \right)^{(p^{j+1} - 1)/(p-1)} }\left(\sum_{k=1}^{n-1} k \left( x^{q^k} - n^{-1}T(x)   \right)   \right)^{p^j}  \right).
  $$
 \end{thm}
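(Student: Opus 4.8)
The plan is to recognize $f(x) = ax^p + xg(T(x))$ as an instance of the construction in Theorem~\ref{key1} with additive part $H(x) = ax^p$ (which is $\F$-additive since $a \in \F$), so that the accompanying polynomials are exactly $\bar{f}(x) = ax^p + xg(x)$ and $\varphi_y(x) = ax^p + xg(y)$. The hypotheses that $\bar{f}$ permutes $\F$ and that each $\varphi_y$ permutes $\ker(T)$ immediately yield condition (ii) of Theorem~\ref{key1}, while condition (i) follows because $\varphi_y(x)=0$ together with $T(x)=0$ forces $x\in\ker(T)$ with $\varphi_y(x)=\varphi_y(0)$, hence $x=0$ by injectivity of $\varphi_y$ on $\ker(T)$. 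Thus $f$ permutes $\Fn$, and Corollary~\ref{cor:key1} supplies the two general shapes of the preimage: $f^{-1}(x)=\varphi_{\bar{f}^{-1}(T(x))}^{-1}(x)$ when $\varphi_{\bar{f}^{-1}(T(x))}$ permutes $\Fn$, and $f^{-1}(x)=n^{-1}\bar{f}^{-1}(T(x))+\varphi_{\bar{f}^{-1}(T(x))}^{-1}|_{\ker(T)}\!\left(x-n^{-1}T(x)\right)$ when $p\nmid n$. The whole theorem then reduces to computing these inverse maps of $\varphi_y$ explicitly.

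The key reduction I would use is the factorization $\varphi_y(x) = a\big(x^p + (g(y)/a)x\big) = a\,P_c(x)$ with $c := g(y)/a$, where $P_c$ is the polynomial of Lemma~\ref{lem1}; since multiplication by $a\in\F^*$ preserves both $\Fn$ and $\ker(T)$, this gives $\varphi_y^{-1}(u) = P_c^{-1}(u/a)$ on whichever subspace $P_c$ is invertible. Setting $y=\bar{f}^{-1}(T(x))$, the three regimes of the theorem match the case split of Lemma~\ref{lem1} through the value $N_{q|p}(c)=c^{(p^m-1)/(p-1)}$. The degenerate subcase $g(y)=0$ gives $c=0$ and $\varphi_y(x)=ax^p$, a Frobenius twist whose inverse is $(u/a)^{q^n/p}$, yielding part~(a)'s first formula. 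By the closing remark of Case~1 in Lemma~\ref{lem1}, $\varphi_y$ permutes $\F$ precisely when $N_{q|p}(c)\neq(-1)^m$; in either branch of part~(a) ($p\mid n$, or this norm condition) Corollary~\ref{cor:key1}(i) forces $\varphi_y$, hence $P_c$, to permute $\Fn$, which is Case~2 of Lemma~\ref{lem1}. The remaining regime $p\nmid n$, $N_{q|p}(c)=(-1)^m$ is exactly Case~1.

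For part~(a)'s second formula I would substitute the Case~2 expression for $P_c^{-1}$ into $f^{-1}(x)=\varphi_y^{-1}(x)=P_c^{-1}(x/a)$ and simplify. Writing $c=g(y)/a$ and $(x/a)^{p^i}=x^{p^i}/a^{p^i}$, the coefficient of $x^{p^i}$ acquires the factor $a^{-(p^i+\sum_{j=i+1}^{mn-1}p^j)}=a^{-(p^{mn}-p^i)/(p-1)}$, while the leading constant $c^{n(p^m-1)/(p-1)}+(-1)^{mn-1}$ rewrites, using $N_{q|p}((-a)^n)=(-1)^{mn}N_{q|p}(a^n)$, as $\big(N_{q|p}(g(y)^n)-N_{q|p}((-a)^n)\big)/N_{q|p}(a^n)$. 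Combining these and reducing the resulting power of $a$ modulo $p^m-1$ (legitimate since $a^{p^m}=a$), via the congruence $\tfrac{p^{mn}-1}{p-1}\equiv n\tfrac{p^m-1}{p-1}\pmod{p^m-1}$, collapses the net exponent of $a$ to $(p^i-1)/(p-1)$ and produces exactly the stated numerator and denominator.

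For part~(b) I would apply Corollary~\ref{cor:key1}(ii) and compute $\varphi_y^{-1}|_{\ker(T)}(x-n^{-1}T(x)) = P_c^{-1}\big((x-n^{-1}T(x))/a\big)$ using the Case~1 formula of Lemma~\ref{lem1} (say with $\delta=0$). Setting $z=x-n^{-1}T(x)\in\ker(T)$, the facts that $n^{-1}T(x)\in\F$ is fixed by the $p^{km}$-power and that $a\in\F$ satisfies $a^{p^{km}}=a$ give $(z/a)^{p^{km+j}} = (x^{q^k}-n^{-1}T(x))^{p^j}/a^{p^j}$; the $k=0$ term drops out because of the factor $-k$, and since each integer coefficient $k\in\mathbb{F}_p$ is fixed by Frobenius one can pull the sum over $k$ inside the $p^j$-power to obtain $\big(\sum_{k=1}^{n-1}k(x^{q^k}-n^{-1}T(x))\big)^{p^j}$. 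Finally substituting $a_j=(-1)^j c^{-(p^{j+1}-1)/(p-1)}$ and simplifying $a_j/a^{p^j}=(-1)^j a^{(p^j-1)/(p-1)}/g(y)^{(p^{j+1}-1)/(p-1)}$ delivers the stated expression. The main obstacle throughout is the exponent bookkeeping in part~(a): matching the raw powers $c^{\sum p^j}$ from Lemma~\ref{lem1} to the compact $N_{q|p}$ form requires careful reduction of the $a$-exponents modulo $p^m-1$, and the analogous care in part~(b) is what licenses moving the $k$-sum inside the Frobenius.
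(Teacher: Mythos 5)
Your proposal is correct and follows essentially the same route as the paper's proof: reducing to Theorem~\ref{key1} and Corollary~\ref{cor:key1} with $H(x)=ax^p$, factoring $\varphi_y(x)=aP_{g(y)/a}(x)$ so that Lemma~\ref{lem1} supplies the inverses, handling $g(y)=0$ separately via $\varphi_y(x)=ax^p$, and performing the same exponent bookkeeping for $a$ (including the identity $N_{q|p}(a)^n a^{-\sum_{j=i}^{mn-1}p^j}=a^{(p^i-1)/(p-1)}$, valid because $a\in\F$, and the Case~1 computation with $\delta=0$ pulling the $k$-sum inside the Frobenius). Your explicit verification of condition (i) of Theorem~\ref{key1} and of the norm identity $N_{q|p}((-a)^n)=(-1)^{mn}N_{q|p}(a^n)$ merely fills in details the paper leaves implicit.
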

 
 \begin{proof}
  The fact that $f$ permutes $\Fn$ under the assumptions follows from Theorem \ref{key1} with $H(x) = ax^p$.
  
  (a) In this case we have $f^{-1}(x) = \varphi_{\bar{f}^{-1}(T(x))}^{-1}(x)$ by Corollary \ref{cor:key1}. 
  If $y \in \F$ is such that $g(y) = 0$, then $\varphi_y(x) = ax^p$ is a permutation of $\Fn$, and $\varphi_y^{-1}(x) = (x/a)^{q^n/p}$.
  Otherwise, noting that $\varphi_y(x) = a P_{g(y)/a}(x)$, where $P_c(x) = x^p + cx$ from Lemma \ref{lem1}, and substituting $y$ with $\bar{f}^{-1}(T(x))$, we get $f^{-1}(x) = P^{-1}_{g(\bar{f}^{-1}(T(x)))/a} (x/a)$, 
  where $P^{-1}_c$, with $c = g(\bar{f}^{-1}(T(x)))/a$, is given in Case 2 of Lemma \ref{lem1}. Thus we obtain
  $$
  f^{-1}(x) = \dfrac{\sum_{i=0}^{mn-1}(-1)^i\left( \dfrac{g\left( \bar{f}^{-1}(T(x)) \right)}{a} \right)^{\sum_{j=i+1}^{mn-1} p^j} \left( \dfrac{x}{a}  \right)^{p^i}}   {N_{q|p}\left( \dfrac{g\left( \bar{f}^{-1}(T(x)) \right)}{a} \right)^n + (-1)^{mn-1}}.  
  $$
  Now since $N_{q|p}(a)^n (-1)^{mn-1} = - N_{q|p}(-a)^n$ 
  and $N_{q|p}(a)^n a^{- \sum_{j=i}^{mn-1}p^j} = a^{\sum_{j=0}^{mn-1} p^j - \sum_{j=i}^{mn-1}p^j} = a^{\sum_{j=0}^{i-1} p^j} = a^{(p^i-1)/(p-1)}$, the result follows.

(b) Assume that $y \in \F$ is such that $N_{q|p}(g(y)/a) = (-1)^m$. Then $\varphi_y(x) = ax^p + xg(y) = a(x^p + xg(y)/a) = aP_{g(y)/a}(x)$ where $P_{c_y}$, $c_y = g(y)/a \in \F^*$, is a permutation of $\ker(T)$ in Case 1 of Lemma \ref{lem1}. Hence $\varphi^{-1}_y|_{\ker(T)}(x) = P^{-1}_{g(y)/a}(x/a)$, where
$P^{-1}_{g(y)/a}$ is given in Case 1 of Lemma \ref{lem1}. Then substituting $y$ with $\bar{f}^{-1}(T(x))$ in $\varphi^{-1}_y|_{\ker(T)}$ and then using Corollary \ref{cor:key1} together with Case 1 of Lemma \ref{lem1} with 
$c = g(\bar{f}^{-1}(T(x)))/a$ and $\delta = 0$, we get 
\begin{equation*}
\begin{split}
f^{-1}(x) &= n^{-1}\bar{f}^{-1}(T(x)) + P_{g(\bar{f}^{-1}(T(x)))/a}^{-1}\left(\frac{x - n^{-1}T(x)}{a} \right)\\
&= n^{-1}\bar{f}^{-1}(T(x))\\
& \ \ \ \ - n^{-1}\sum_{j=0}^{m-1}\sum_{k=1}^{n-1}k(-1)^j \left( \dfrac{a}{g\left(\bar{f}^{-1}(T(x))\right)} \right)^{\frac{p^{j+1} - 1}{p-1}}\left( \dfrac{x-n^{-1}T(x)}{a} \right)^{q^k p^j}\\
&= 
n^{-1}\bar{f}^{-1}(T(x))\\
& \ \ \ \ - n^{-1}\sum_{j=0}^{m-1}\sum_{k=1}^{n-1}k(-1)^j \left( \dfrac{a}{g\left(\bar{f}^{-1}(T(x))\right)} \right)^{\frac{p^{j+1} - 1}{p-1}}a^{-p^j}\left( x^{q^k}-n^{-1}T(x) \right)^{p^j}\\
&= n^{-1} \left( \bar{f}^{-1}(T(x)) -\sum_{j=0}^{m-1}\dfrac{(-1)^j a^{(p^j-1)/(p-1)}}{g\left( \bar{f}^{-1}(T(x)) \right)^{(p^{j+1} - 1)/(p-1)} }\left(\sum_{k=1}^{n-1} k \left( x^{q^k} - n^{-1}T(x)   \right)   \right)^{p^j}  \right),
\end{split}
\end{equation*}
as required.
\end{proof}

\begin{rmk}
 One may find several $g \in \F[x]$ such that $\varphi_y (x) = ax^p + xg(y) = a(x^p + x g(y) / a)$ induces a permutation of $\ker(T)$ for each $y \in \F$.
 For instance if we let $m$ be even and $n$ be such that $p \nmid n$ and $\gcd(n,p-1) = 1$, then 
 $\varphi_y(x) = a(x^p + x g(y)/a)$ induces a permutation of $\ker(T)$ for each $g \in \F[x]$ and each $y \in \F$. 
 Indeed, from Lemma \ref{lem1} it follows that $\varphi_y$ induces a permutation of 
 $\ker(T)$ if and only if $N_{q|p}(g(y)/a) = 1$ or $N_{q|p}(g(y)/a)^n \neq 1$. Noting that $\gcd(n,p-1) = 1$ implies that there does not exist an $n$-th root
 of unity in $\mathbb{F}_p \setminus\{1\}$, then if $N_{q|p}(g(y)/a) \neq 1$, we get $N_{q|p}(g(y)/a)^n \neq 1$ as required.
 For example we can pick $g(x) = x^{p-1}$, and if $a \neq -1$, then $\bar{f}(x) := ax^p + xg(x) = (a+1)x^p$ permutes $\F$, having inverse (on $\F$) given by
 $\bar{f}^{-1}(x) = (x/(a+1))^{q/p}$. As another example we can let $g(x) = x^{q-2}L(x)$ where $L \in \F[x]$ is any $p$-polynomial such that
 $\bar{f}|_{\F}(x) = ax^p + xg(x) = ax^p + L(x)$ is a permutation of $\F$. Then one can use for instance Proposition \ref{LinInv} to obtain the inverse of $\bar{f}|_{\F}$.
\end{rmk}

\begin{cor}\label{cor:compositional_inverse}
 Using the same notations and assumptions of Theorem \ref{generalization}, the compositional inverse of $f$ on $\Fn[x]$ is given by
 \begin{equation*}
  \begin{split}
  f^{-1}(x) &= \left( 1 - g\left( \bar{f}^{-1}(T(x))  \right)^{q-1} \right) \left( \dfrac{x}{a}  \right)^{q^n/p}\\
  &+ g\left( \bar{f}^{-1}(T(x)) \right)^{q-1}\left( \left(\dfrac{g\left( \bar{f}^{-1}(T(x)) \right)}{a}\right)^{(q-1)/(p-1)} - (-1)^m\right)^{p-1}\\
  & \cdot \sum_{i=0}^{mn-1} \dfrac{(-1)^i a^{(p^i-1)/(p-1)} g\left(\bar{f}^{-1}\left(T(x)\right)\right) ^{\sum_{j=i+1}^{mn-1} p^j}}{g\left(\bar{f}^{-1}\left(T(x)\right)\right)^{n(q-1)/(p-1)} - (-a)^{n(q-1)/(p-1)}} x^{p^i}\\
  &+ \left ( 1 - \left( \left(\dfrac{g\left( \bar{f}^{-1}(T(x)) \right)}{a}\right)^{(q-1)/(p-1)} - (-1)^m\right)^{p-1}\right)\\
  & \cdot
  n^{p-2} \left( \bar{f}^{-1}(T(x)) -\sum_{j=0}^{m-1}\dfrac{(-1)^j a^{(p^j-1)/(p-1)}}{g\left( \bar{f}^{-1}(T(x)) \right)^{(p^{j+1} - 1)/(p-1)} }\left(\sum_{k=1}^{n-1} k \left( x^{q^k} - n^{p-2}T(x)   \right)   \right)^{p^j}  \right).
  \end{split}
  \end{equation*}
\end{cor}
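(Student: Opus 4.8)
The plan is to recognize the single displayed expression as the three preimage formulas of Theorem \ref{generalization} stitched together by scalar ``selector'' coefficients built from Fermat-type indicators, and to check that these selectors reproduce exactly the trichotomy governing that theorem. Throughout I would set $y := \bar{f}^{-1}(T(x))$; since $\bar{f}$ permutes $\F$ we have $y \in \F$, hence $g(y) \in \F$ and $N_{q|p}(g(y)/a) = (g(y)/a)^{(q-1)/(p-1)} \in \mathbb{F}_p$.

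First I would record the two indicator identities. For $u \in \F$ one has $u^{q-1} = 1$ when $u \neq 0$ and $u^{q-1} = 0$ when $u = 0$; likewise for $t \in \mathbb{F}_p$ one has $t^{p-1} \in \{0,1\}$ with $t^{p-1} = 0$ iff $t = 0$ (and with the convention $1/0 = 0$ every sub-expression is defined everywhere). Applying the first to $u = g(y)$ and the second to $t = N_{q|p}(g(y)/a) - (-1)^m$, the three scalar prefactors in the corollary become, respectively, the indicator of $g(y)=0$; the indicator of $[\,g(y)\neq 0 \text{ and } N_{q|p}(g(y)/a)\neq (-1)^m\,]$; and the indicator of $N_{q|p}(g(y)/a)=(-1)^m$. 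Writing $A := g(y)^{q-1}$ and $B := (N_{q|p}(g(y)/a)-(-1)^m)^{p-1}$, a short computation (using $N_{q|p}(0/a)=0\neq(-1)^m$, so that $A=0$ forces $B=1$) shows the three coefficients $1-A$, $AB$, $1-B$ are pairwise orthogonal and sum to $1$. Thus exactly one term survives for each $x$, matching the three cases of Theorem \ref{generalization}.

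Next I would match each surviving term with its case. The first term collapses to $(x/a)^{q^n/p}$, the case (a) preimage when $g(y)=0$. For the second term I would rewrite the denominator via $N_{q|p}(w) = w^{(q-1)/(p-1)}$, so that $g(y)^{n(q-1)/(p-1)} - (-a)^{n(q-1)/(p-1)} = N_{q|p}(g(y)^{n}) - N_{q|p}((-a)^{n})$, recovering verbatim the big-sum formula of case (a) in the subcase $g(y)\neq 0$, $N_{q|p}(g(y)/a)\neq(-1)^m$. For the third term I would use $n^{p-2} = n^{-1}$ whenever $p \nmid n$, which turns both the outer factor and the inner occurrence $n^{p-2}T(x)$ into the $n^{-1}$ appearing in the case (b) trace formula, reproducing case (b).

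The main obstacle is the boundary behaviour at $p \mid n$, where $n^{-1}$ is meaningless yet the third term formally still carries $n^{p-2}T(x)$. I would resolve this by observing that the third selector is active only when $N_{q|p}(g(y)/a) = (-1)^m$, and that under the standing hypothesis ``$\varphi_y$ permutes $\ker(T)$ for every $y \in \F$'' together with Lemma \ref{lem1} this equality is impossible once $p \mid n$: Case 1 of Lemma \ref{lem1} (the only case producing a permutation of $\ker(T)$ with $N_{q|p}(g(y)/a)=(-1)^m$) requires $p \nmid n$, whereas Case 2 fails precisely when $N_{q|p}(g(y)/a)^{n} = (-1)^{mn}$. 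Hence for $p \mid n$ the third coefficient vanishes for every admissible $x$, so $n^{p-2}$ is never required to be a genuine inverse, and the $1/0=0$ convention keeps the term well defined even where its coefficient is zero. The same computation shows the second-term denominator is nonzero exactly on its active set (it vanishes iff $N_{q|p}(g(y)/a)^n=(-1)^{mn}$, which is excluded there), completing the identification and hence the proof.
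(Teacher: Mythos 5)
Your proposal is correct and follows essentially the same route as the paper's own proof: both read the displayed formula as a step function gluing the three preimage formulas of Theorem \ref{generalization} via the indicator prefactors $1-g(y)^{q-1}$, $g(y)^{q-1}\bigl(N_{q|p}(g(y)/a)-(-1)^m\bigr)^{p-1}$, and their complement, and both invoke Lemma \ref{lem1} (via the standing hypothesis that $\varphi_y$ permutes $\ker(T)$) to see that these cases exhaust all possibilities for $c_x = g(\bar{f}^{-1}(T(x)))/a$. Your treatment is in fact more careful than the paper's one-paragraph argument, since you explicitly verify the orthogonality and summation to $1$ of the selectors, the nonvanishing of the second term's denominator on its active set, and the vanishing of the third selector when $p \mid n$ (which the paper asserts without justifying), so no changes are needed.
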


\begin{proof}
We put the results of Theorem \ref{generalization} (a) and (b) together. This is a step function where only one of the three terms is non-zero at a time. 
Denote $c_x := g(\bar{f}^{-1}(T(x)))/a$. 
The first term of the expression is non-zero only if $c_x = 0$ corresponding to the result in (a), while the second is non-zero only if
$c_x^{(q-1)/(p-1)} \neq 0, (-1)^m$, given in $(a)$ as well. The third term is non-zero only if 
$c_x^{(q-1)/(p-1)} = (-1)^m$ and $p \nmid n$, which is given in (b). Because $\varphi_{c_x}$ is a permutation of $\ker(T)$, 
then by Lemma \ref{lem1}
these are all the possibilities of $c_x$; hence we are done.
\end{proof}

In Theorem \ref{generalization}, if we let $p = 2$, $n$ be odd, $g(x) = x$, and $a \in \F \setminus \{0,1\}$, then we get 
$f(x) = ax^2 + xT(x) = x(T(x) + ax)$ in (2) of the Introduction, the compositional inverse of which was given in \cite{coulter1}. Similarly,
if instead we let $g(x) = L(x) + ax$ for some additive $L$ over $\F$ and some $a \in \F^*$,
then we get $f(x) = ax^2 + xg(T(x)) = x(L(T(x)) + aT(x) + ax)$ in (3) 
of the Introduction, with compositional inverse given in \cite{wu}. 
Thus these compositional inverses should follow from Corollary \ref{cor:compositional_inverse}.

  \section{Explicit compositional inverse of a second class}
  
  We further demonstrate the utility of Theorem \ref{inv0} by obtaining the explicit compositional inverse of a class of permutation polynomials generalizing 
  that of a linearized class given in Theorem 2.2 of \cite{wu_new}, where the corresponding inverse was obtained. Our result is presented in Theorem \ref{thm: simple proof}.
  See also Lemma \ref{lem: simple proof} which implies the aforementioned result by giving the compositional inverse of a more general class of linearized permutation
  polynomials.
  In particular, the method of our proof, which is an application of Theorem \ref{inv0}, 
  seems considerably less complicated than that employed in the aforementioned paper; the latter consisted of a direct application of Proposition \ref{LinInv} 
  and the computation of determinants. First we need the following two lemmas.

\begin{lem}\label{lem: special case}
Let $\alpha \in \Fn$ and denote $T_\alpha(x) := T(\alpha x)$. 
Then $\varphi(x) := x^q - x$ induces a bijection from $\ker(T_\alpha)$ to $\ker(T)$, in $\Fn$, if and only if $T(\alpha) \neq 0$. In the case that $T(\alpha) \neq 0$,
one of the polynomials, over $\Fn$, inducing the inverse map of $\varphi|_{\ker(T_\alpha)}$, is given by
$$
\varphi^{-1}|_{\ker(T)}(x) = T(\alpha)^{-1} \sum_{k=0}^{n-1} \sum_{j=0}^k \alpha^{q^j} x^{q^k}.
$$
\end{lem}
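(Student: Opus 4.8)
The plan is to treat the two assertions separately: the bijectivity criterion first, then the explicit formula for the inverse map.

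For the criterion, I would begin by recording that both $\ker(T_\alpha)$ and $\ker(T)$ are $(n-1)$-dimensional $\F$-subspaces of $\Fn$ whenever $\alpha\neq 0$: the trace $T$ is surjective onto $\F$, and since $\alpha\Fn=\Fn$ the map $T_\alpha(x)=T(\alpha x)$ is likewise surjective onto $\F$, so each kernel has codimension $1$. Next I would note that $T\circ\varphi=0$ identically, because $T(x^q)=T(x)$ forces $T(\varphi(x))=T(x^q)-T(x)=0$; hence $\varphi(\Fn)\subseteq\ker(T)$, and in particular $\varphi(\ker(T_\alpha))\subseteq\ker(T)$. Since domain and codomain have equal (finite) size, $\varphi$ bijects $\ker(T_\alpha)$ onto $\ker(T)$ if and only if $\varphi|_{\ker(T_\alpha)}$ is injective, i.e. $\ker(\varphi)\cap\ker(T_\alpha)=\{0\}$. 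The key computation is that $\ker(\varphi)=\F$ (the fixed field of $x\mapsto x^q$) and that for $c\in\F$ one has $T_\alpha(c)=T(\alpha c)=cT(\alpha)$; thus $\F\cap\ker(T_\alpha)=\{0\}$ exactly when $T(\alpha)\neq 0$. The degenerate case $\alpha=0$ (where $T(\alpha)=0$ and $\ker(T_\alpha)=\Fn$) is consistent, since $\varphi$ is not injective on $\Fn$.

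For the explicit inverse, the plan is to invoke Theorem \ref{thm:LinearizedInverse} with $V=\ker(T_\alpha)$ and $\bar V=\ker(T)$. The first step is to produce a convenient idempotent $K\in\Ln$ with $\ker(K)=\ker(T_\alpha)$: I would take $K(x):=T(\alpha)^{-1}T(\alpha x)$, a $q$-polynomial with kernel $\ker(T_\alpha)$, which is idempotent because $K(x)\in\F$ for every $x$ and $K(c)=T(\alpha)^{-1}T(\alpha c)=T(\alpha)^{-1}cT(\alpha)=c$ for each $c\in\F$, so $K(K(x))=K(x)$. By Theorem \ref{thm:LinearizedInverse} (equivalently Lemma \ref{lem:LinearizedInverse}), it then suffices to verify that the proposed polynomial $R(x):=T(\alpha)^{-1}\sum_{k=0}^{n-1}\sum_{j=0}^{k}\alpha^{q^j}x^{q^k}$ satisfies the defining relation $R(\varphi(x))=x-K(x)$ for all $x\in\Fn$; restricting to $\ker(T_\alpha)=\ker(K)$ then gives $R(\varphi(x))=x$, so $R$ induces the inverse of $\varphi|_{\ker(T_\alpha)}$ on $\ker(T)$ (and $R$ maps $\ker(T)$ back into $\ker(T_\alpha)$, since every $y\in\ker(T)$ equals $\varphi(x)$ with $x\in\ker(T_\alpha)$).

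The heart of the argument, and the step requiring the most care, is this verification. Since $R$ is additive I would expand $R(\varphi(x))=R(x^q)-R(x)$, reindex the $R(x^q)$ sum by $l=k+1$, and use $x^{q^n}=x$ to wrap the top term around. The inner sums $\sum_{j=0}^{k}\alpha^{q^j}$ are designed to telescope: collecting coefficients of each $x^{q^l}$, the coefficient of $x^{q^l}$ for $1\le l\le n-1$ collapses to $-\alpha^{q^l}$, while the wrapped term contributes $\sum_{j=0}^{n-1}\alpha^{q^j}=T(\alpha)$ to the coefficient of $x$, leaving $T(\alpha)-\alpha$ there. Thus $R(\varphi(x))=T(\alpha)^{-1}\big(T(\alpha)x-\sum_{l=0}^{n-1}\alpha^{q^l}x^{q^l}\big)=x-T(\alpha)^{-1}T(\alpha x)=x-K(x)$, as needed. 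The main obstacle is purely bookkeeping: keeping the modular index shift at $l=n\equiv 0$ straight so that exactly the term $T(\alpha)x$ is produced. As an alternative to this direct check, one could solve the linear system $\bar c\,D_\varphi=v_{q,n}(\id-K)$ furnished by Theorem \ref{thm:LinearizedInverse}, where $D_\varphi$ is the matrix with $-1$ on the diagonal, $1$ on the superdiagonal, and $1$ in the bottom-left corner; back-substitution there reproduces the same coefficients $c_k=T(\alpha)^{-1}\sum_{j=0}^k\alpha^{q^j}$.
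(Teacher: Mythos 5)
Your proposal is correct and takes essentially the same route as the paper: the same counting argument ($\im(T_\alpha)=\im(T)=\F$, hence equal kernel sizes, with injectivity reduced to $\ker(\varphi)\cap\ker(T_\alpha)=\{0\}$ via $\ker(\varphi)=\F$ and $T_\alpha(c)=cT(\alpha)$), followed by the same telescoping expansion of $R(x^q)-R(x)$, which collapses to $x - T(\alpha)^{-1}T_\alpha(x)$ and hence to $x$ on $\ker(T_\alpha)$. Your framing through Theorem \ref{thm:LinearizedInverse} with the idempotent $K=T(\alpha)^{-1}T_\alpha$ is exactly how the paper itself says (in the remark following the lemma) the coefficients were obtained, so the two arguments coincide in substance.
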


\begin{proof}
If we assume that $T(\alpha) = 0$, then 
$T_\alpha(1) = T(\alpha) = 0$ and $\varphi(1) = 0$; hence $1 \in \ker(\varphi) \cap \ker(T_\alpha)$ and
$\varphi|_{\ker(T_\alpha)}$ is not bijective. Now assume $T(\alpha) \neq 0$. 
Since $\ker(T) = \{x^q - x \mid x \in \Fn   \} = \im(\varphi)$, then $\varphi(\ker(T_\alpha)) \subseteq \ker(T)$. 
Note that $\im(T_\alpha) \subseteq \F$ (because $\im(T) = \F$).
Moreover, for any $c \in \F$, we have $c = cT(\alpha)^{-1}T(\alpha) = T(c T(\alpha)^{-1} \alpha) \in \im(T_\alpha)$; 
hence $\im(T_\alpha) = \im(T) = \F$. It follows that $|\ker(T_\alpha)| = |\ker(T)|$. 
To show that $\varphi|_{\ker(T_\alpha)}$ is injective, let $k \in \ker(\varphi) \cap \ker(T_\alpha)$.
In particular, $\varphi(k) = k^q - k = 0$ implies $k \in \F$. Then $T_\alpha(k) = T(\alpha k) = k T(\alpha) = 0$ gives $k = 0$ as required.
Thus $\varphi$ is bijective from $\ker(T_\alpha)$ to $\ker(T)$.
Next we prove that the given $\varphi^{-1}|_{\ker(T)}$ induces the inverse map of $\varphi|_{\ker(T_\alpha)}$. 
Assuming $x \in \ker(T_\alpha)$, we have
 \begin{align*}
  \varphi^{-1}|_{\ker(T)}(\varphi(x)) &= T(\alpha)^{-1} \sum_{k=0}^{n-1} \sum_{j=0}^k \alpha^{q^j} \left(x^{q^{k+1}} - x^{q^k}\right)\\
  &=
  T(\alpha)^{-1}\left( \sum_{k=1}^n \sum_{j=0}^{k-1}\alpha^{q^j} x^{q^k} - \sum_{k=0}^{n-1}\sum_{j=0}^{k}\alpha^{q^j}x^{q^k} \right)\\
  &=
  \left(1 - T(\alpha)^{-1} \alpha \right) x + T(\alpha)^{-1} \sum_{k=1}^{n-1}\left( \sum_{j=0}^{k-1}\alpha^{q^j} - \sum_{j=0}^{k}\alpha^{q^j} \right)x^{q^k}\\
  &= \left(1 - T(\alpha)^{-1} \alpha \right) x - T(\alpha)^{-1} \sum_{k=1}^{n-1}\left(\alpha x  \right)^{q^k} \\
  &= \left(1 - T(\alpha)^{-1} \alpha \right) x - T(\alpha)^{-1} \left( T_\alpha(x) - \alpha x \right)\\
  &= x - T(\alpha)^{-1}T_\alpha(x)\\
  &=
  x,
 \end{align*}
 as required.
\end{proof}

\begin{rmk}
 We used Theorem \ref{thm:LinearizedInverse} to obtain the coefficients $\bar{c} = (c_0 \ c_1 \cdots c_{n-1})$ of $\varphi^{-1}|_{\ker(T_\alpha)}$. 
 That is, we obtained a solution, $\bar{c}$, to the linear equation 
 $$
 \bar{c} D_\varphi = v_{q,n}(\id - T(\alpha)^{-1} T_\alpha) = -T(\alpha)^{-1}\left( \alpha - T(\alpha), \alpha^q, \alpha^{q^2}, \ldots , \alpha^{q^{n-1}}   \right),
 $$
 where $T(\alpha)^{-1}T_\alpha$ is idempotent with kernel $\ker(T_\alpha)$, and $D_\varphi$ is the $n \times n$ associate Dickson matrix of $\varphi(x) = x^q - x$ given by
 $$
 D_{\varphi} = \left( \begin{array}{cccccc}
                 -1 & 1 & 0 & 0 &\cdots & 0\\
                 0 & -1 & 1 & 0 & \cdots & 0\\
                 \vdots & \vdots &\vdots &\vdots & & \vdots\\
                1 & 0 & 0 & 0 & \cdots & -1
                \end{array} \right).
 $$
\end{rmk}

The following lemma gives the compositional inverse of a class of linearized permutation
  polynomials generalizing that whose inverse was recently obtained in Theorem 2.2, \cite{wu_new}.
  See Corollary \ref{cor: wu's result}.
  The method utilized here to obtain such a result, as an application of Theorem \ref{inv0}, seems much less complicated than that employed in \cite{wu_new}.
  
\begin{lem}\label{lem: simple proof}
 Let $\alpha \in \mathbb{F}_{q^n}$, and define the polynomial
 $$
 f(x) := x^q - x + T\left(\alpha x \right) \in \Fn[x].
 $$
 Then the following two results hold.
 
 (a) $f$ is a permutation polynomial over $\Fn$ if and only if $T(\alpha) \neq 0$, and the characteristic, $p$, of $\F$, does not divide $n$.
 
 (b) If $T(\alpha) \neq 0$ and $p \nmid n$ (equivalently, $f$ is a permutation polynomial over $\Fn$), then the compositional inverse of $f$ over $\Fn$ is
 $$
 f^{-1}(x) = T(\alpha)^{-1}n^{-1}\left( T(x) + B(x) \right),
 $$
 where the coefficients of $B(x) = \sum_{k=0}^{n-1}b_k x^{q^k} \in \Fn[x]$ are given by
 $$
 b_k = \sum_{j=1}^{n-1}j \alpha^{q^j} - n \sum_{l = k+1}^{n-1} \alpha^{q^l}, \ \ \ \ 0 \leq k \leq n-1.
 $$
\end{lem}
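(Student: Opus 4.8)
The plan is to exhibit $f$ as a member of the family inverted in Theorem~\ref{inv0}, choosing the auxiliary maps to be idempotent so that the general formula collapses to an explicit linearized polynomial; the only genuine work is then a coefficient comparison. Throughout I would write $T_\alpha(x) := T(\alpha x)$ and take $\varphi(x) := x^q - x$ and $h \equiv 1$. Since $T_\alpha(x) \in \F$, one has $f(x) = \varphi(x) + T_\alpha(x)$, and this can be read in two equivalent ways: with the unnormalized data $\psi = T_\alpha$, $\bar\psi = T$, $g = \id$ (valid for every $n$), or, when $p \nmid n$, with the idempotent data $\psi = T(\alpha)^{-1}T_\alpha$, $\bar\psi = n^{-1}T$, $g(y) = T(\alpha)y$. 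In either reading $\varphi\circ\psi = 0 = \bar\psi\circ\varphi$, because $T_\alpha$ takes values in $\F$ while $\varphi$ takes values in $\ker(T)$.

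For part~(a), I would first dispose of the case $T(\alpha) = 0$ directly: there $f(1) = 1^q - 1 + T(\alpha) = 0 = f(0)$, so $f$ is not injective. When $T(\alpha) \neq 0$ I apply Theorem~\ref{thm0} with the unnormalized data. Here $\psi(\Fn) = \im(T_\alpha) = \F = \bar\psi(\Fn)$ by Lemma~\ref{lem: special case}, condition~(i) holds since $\ker(\varphi)\cap\ker(\psi) = \F\cap\ker(T_\alpha) = \{0\}$ (because $T(\alpha x) = xT(\alpha)$ for $x\in\F$), and the statement turns on condition~(ii): $\bar f(x) = x^q - x + T(x)$ restricts on $\F$ to $x \mapsto nx$, which permutes $\F$ precisely when $p \nmid n$. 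This yields part~(a).

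For part~(b), assume $T(\alpha)\neq 0$ and $p\nmid n$ and pass to the idempotent data, so that $S_\psi = \ker(\psi) = \ker(T_\alpha)$ and $S_{\bar\psi} = \ker(T)$. The hypotheses of Theorem~\ref{inv0} are then already in hand: $|S_\psi| = |S_{\bar\psi}|$ and the bijectivity of $\varphi$ from $\ker(T_\alpha)$ onto $\ker(T)$ are exactly the content of Lemma~\ref{lem: special case}, while $\ker(\varphi)\cap\psi(S_\psi) = \F\cap\{0\} = \{0\}$ is automatic from idempotency. Since $\varphi = x^q-x$ has kernel $\F$ it does not permute $\Fn$, so I would use the first inversion formula. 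Computing its data, $\bar f(x) = x^q - x + n^{-1}T(\alpha)T(x)$ restricts on $\F$ to $x\mapsto T(\alpha)x$, whence $\bar f^{-1}(y) = T(\alpha)^{-1}y$ on $\F$; feeding this in, the numerator telescopes to $x - n^{-1}T(x)$, and the formula becomes
$$
f^{-1}(x) = T(\alpha)^{-1}n^{-1}T(x) + \varphi^{-1}|_{\ker(T)}\!\left(x - n^{-1}T(x)\right),
$$
with $\varphi^{-1}|_{\ker(T)}$ supplied explicitly by Lemma~\ref{lem: special case}.

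The remaining and principal step is to match this against $T(\alpha)^{-1}n^{-1}(T(x)+B(x))$. Since $\varphi^{-1}|_{\ker(T)}$ is linearized and $T(x)\in\F$ is fixed by the $q$-power map, I would split $\varphi^{-1}|_{\ker(T)}(x - n^{-1}T(x)) = \varphi^{-1}|_{\ker(T)}(x) - n^{-1}\varphi^{-1}|_{\ker(T)}(T(x))$, use $\varphi^{-1}|_{\ker(T)}(T(x)) = T(\alpha)^{-1}T(x)\sum_{k=0}^{n-1}\sum_{j=0}^{k}\alpha^{q^j}$ together with the identity $\sum_{k=0}^{n-1}\sum_{j=0}^{k}\alpha^{q^j} = nT(\alpha) - \sum_{j=0}^{n-1}j\alpha^{q^j}$, and then read off the coefficient of $x^{q^k}$. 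I expect this bookkeeping to be the main obstacle, since the $T(x)$ terms contribute uniformly across all $k$ while $\varphi^{-1}|_{\ker(T)}(x)$ contributes position-dependent terms; after collecting, the claimed coefficient reduces to the elementary identity $\sum_{j=0}^{k}\alpha^{q^j} - T(\alpha) = -\sum_{l=k+1}^{n-1}\alpha^{q^l}$, which closes the proof. One may finish either by this coefficient match or, equivalently, by verifying $f\circ f^{-1} = \id$ and invoking uniqueness of compositional inverses.
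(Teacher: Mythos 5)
Your proposal is correct and follows essentially the same route as the paper's proof: both reduce to Theorem \ref{inv0} with the idempotent data $\psi = T(\alpha)^{-1}T_\alpha$, $\bar{\psi} = n^{-1}T$, invoke Lemma \ref{lem: special case} for the bijectivity of $\varphi|_{\ker(T_\alpha)}$ and the explicit $\varphi^{-1}|_{\ker(T)}$, and finish with the same coefficient bookkeeping via the identities $\sum_{k=0}^{n-1}\sum_{j=0}^{k}\alpha^{q^j} = nT(\alpha) - \sum_{j=0}^{n-1}j\alpha^{q^j}$ and $\sum_{j=0}^{k}\alpha^{q^j} - T(\alpha) = -\sum_{l=k+1}^{n-1}\alpha^{q^l}$. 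The only (harmless) cosmetic difference is that you absorb the scalar into $g(y) = T(\alpha)y$ with $h \equiv 1$, whereas the paper normalizes to $H = T(\alpha)^{-1}f$ with $h = T(\alpha)^{-1}$ and $g = \id$, then recovers $f^{-1}(x) = H^{-1}(T(\alpha)^{-1}x)$ at the end.
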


\begin{proof}
 (a) As done in Lemma \ref{lem: special case}, denote $\varphi(x) := x^q - x$, $T_\alpha(x) := T(\alpha x)$.
 The reader can check that $\varphi \circ T_\alpha = T \circ \varphi = 0$. 
 From the proof of Lemma \ref{lem: special case} we know that $\im(T_\alpha) = \im(T) = \F$.
 Thus letting $\psi = T_\alpha$, $\bar{\psi} = T$, $g(x) = x$, $h = 1$, and $\varphi$ as before, we see that the polynomial, $f$,
 is an instance of Theorem \ref{thm0}. Hence, the permutability of $f$ boils down to whether or not both conditions (i), (ii), of Theorem \ref{thm0}, are satisfied.
 From the proof of Lemma \ref{lem: special case}, we know that $\ker(\varphi) \cap \ker(T_\alpha) = \{0\}$ if and only if $T(\alpha) \neq 0$.
 Moreover, for all $y \in \F$, we have $\bar{f}(y) = \varphi(y) + T(y) = ny$; thus $\bar{f}$ permutes $\im(T_\alpha) = \im(T) = \F$ if and only if
 $p \nmid n$. Hence, by Theorem \ref{thm0}, $f$ is a permutation polynomial if and only if $T(\alpha) \neq 0$ and $p \nmid n$.
 
 (b) We wish to apply Theorem \ref{inv0}. However in our specific case one can show that both $S_{T_\alpha}, S_T = \Fn$ which $\varphi$ does not permute. 
 As we shall see, we can bypass this problem by considering instead the permutation polynomial $H := T(\alpha)^{-1} f$. 
 Noting that $ \varphi \circ T(\alpha)^{-1}T_\alpha = n^{-1}T \circ \varphi = 0$ and $\im(T(\alpha)^{-1}T_\alpha) = \im(n^{-1}T) = \F$,
 we see that $H$ is an instance of Theorem \ref{thm0} with $\psi = T(\alpha)^{-1}T_\alpha$, $\bar{\psi} = n^{-1}T$, $h = T(\alpha)^{-1}$, 
 and $\varphi,g,$ as before. We have, for all $y \in \F$, $\bar{H}(y) = \varphi(y) + n^{-1}T(y) = y$, thus permuting $\F = \im(\psi) = \im(\bar{\psi})$.
 The reader can check that both $\psi = T(\alpha)^{-1}T_\alpha$, $\bar{\psi} = n^{-1}T$, are idempotent. Then $S_\psi = \ker(\psi) = \ker(T_\alpha)$ and 
 $S_{\bar{\psi}} = \ker(\bar{\psi}) = \ker(T)$; hence $|S_\psi| = |S_{\bar{\psi}}|$ (since $|\ker(T_\alpha)| = |\ker(T)|$ because the images of $T_\alpha, T$, are equal).
 Moreover, from the proof of (a) we know that $\ker(\varphi) \cap S_\psi = \ker(\varphi) \cap \ker(T_\alpha) = \{0\}$. 
 In particular $\ker(\varphi) \cap \psi(S_\psi) = \{0\}$ since $\psi(S_\psi) \subseteq S_\psi$.
 Thus $H$ satisfies the assumptions of Theorem \ref{inv0}, which, after the appropriate substitutions and subsequent simplification, yields
 $$
 H^{-1}(x) = n^{-1}T(x) + \varphi^{-1}|_{\ker(T)}\left(T(\alpha) \left( x - n^{-1}T(x)  \right)   \right).
 $$
 Now using the fact that $f^{-1}(x) = H^{-1}(T(\alpha)^{-1}x)$, we get
 $$
 f^{-1}(x) = T(\alpha)^{-1}n^{-1}T(x) + \varphi^{-1}|_{\ker(T)}\left( x - n^{-1}T(x)  \right).
 $$
 From Lemma \ref{lem: special case}, we may take
 $
\varphi^{-1}|_{\ker(T)}(x) = T(\alpha)^{-1} \sum_{k=0}^{n-1} \sum_{j=0}^k \alpha^{q^j} x^{q^k}.
$
Thus,
\begin{align*}
 \varphi^{-1}|_{\ker(T)}(x - n^{-1}T(x)) 
 &= 
 T(\alpha)^{-1} \sum_{k=0}^{n-1} \sum_{j=0}^k \alpha^{q^j} \left(x - n^{-1}T(x)\right)^{q^k}\\
 &= 
 T(\alpha)^{-1} \left( \sum_{k=0}^{n-1} \sum_{j=0}^k \alpha^{q^j} x^{q^k} - n^{-1} \sum_{l=0}^{n-1} \sum_{k=0}^{n-1} \sum_{j=0}^{k} \alpha^{q^j}x^{q^l} \right)\\
 &= 
 T(\alpha)^{-1} \left( \sum_{k=0}^{n-1} \sum_{j=0}^k \alpha^{q^j} x^{q^k} - n^{-1}\sum_{l=0}^{n-1} \sum_{j=0}^{n-1}(n-j)\alpha^{q^j} x^{q^l} \right)\\
 &=
 T(\alpha)^{-1} \left( \sum_{k=0}^{n-1} \sum_{j=0}^k \alpha^{q^j} x^{q^k} - n^{-1}\sum_{l=0}^{n-1}   \left( nT(\alpha) - \sum_{j=0}^{n-1} j \alpha^{q^j}  \right)  x^{q^l} \right)\\
&=
T(\alpha)^{-1}\sum_{k=0}^{n-1} \left(  \sum_{j=0}^k \alpha^{q^j} - T(\alpha) + n^{-1} \sum_{j=0}^{n-1} j \alpha^{q^j}       \right)x^{q^k}\\
&=
T(\alpha)^{-1}\sum_{k=0}^{n-1} \left( - \sum_{j=k+1}^{n-1} \alpha^{q^j} + n^{-1}\sum_{j=0}^{n-1} j \alpha^{q^j}       \right)x^{q^k}\\
&=
 T(\alpha)^{-1}n^{-1} B(x).
 \end{align*}
The result now follows immediately.
 \end{proof}

 \begin{rmk}
  To further convince the reader of the correctness of the result in (b), we give a second, more direct, proof. That is, we show $f^{-1}(f(x)) = x$.
  Since $B$ is a $q$-polynomial, it follows that
  $$
  f^{-1}(f(x)) = T(\alpha)^{-1}\left(T(\alpha x) + n^{-1}\left(B(x^q) - B(x) + B(1) T(\alpha x)\right) \right).
  $$
  We have 
  \begin{align*}
  n^{-1}B(1) 
  &= 
  \sum_{k=0}^{n-1}\left( n^{-1}\sum_{j=1}^{n-1}j \alpha ^{q^j} - \sum_{j = k+1}^{n-1} \alpha^{q^j}  \right) 
  =  \sum_{j=1}^{n-1}j \alpha ^{q^j} - \sum_{k=0}^{n-1} \sum_{j = k+1}^{n-1} \alpha^{q^j} \\
  &=   \sum_{j=1}^{n-1}j \alpha ^{q^j} - \sum_{k=0}^{n-1} \left( T(\alpha) - \sum_{j=0}^k \alpha^{q^j} \right) 
  = 
    \sum_{j=1}^{n-1}j \alpha ^{q^j} - nT(\alpha) + \sum_{k=0}^{n-1}\sum_{j=0}^k \alpha^{q^j} \\
  &=
   \sum_{j=1}^{n-1}j \alpha ^{q^j} - nT(\alpha) + \sum_{j=0}^{n-1}(n-j) \alpha^{q^j}  \\
  &=
  0.
  \end{align*}
  Additionally,
  \begin{align*}
  n^{-1} \left(B(x^q) - B(x) \right)
   &=
   n^{-1}\left(\sum_{k=1}^{n} b_{k-1}x^{q^k} - \sum_{k=0}^{n-1} b_k x^{q^k} \right)   
  =
  n^{-1}\left(\left( b_{n-1} - b_0   \right)x + \sum_{k=1}^{n-1}\left( b_{k-1} - b_k  \right)x^{q^k} \right)\\
   &= 
  \sum_{j=1}^{n-1} \alpha^{q^j}x + \sum_{k=1}^{n-1}\left( -\sum_{j=k}^{n-1} \alpha^{q^j} + \sum_{j = k+1}^{n-1} \alpha^{q^j}  \right)x^{q^k}
   =  \sum_{j=1}^{n-1} \alpha^{q^j}x - \sum_{k=1}^{n-1}(\alpha x)^{q^k}\\
   &=
    \left( T(\alpha) - \alpha  \right)x - \left(T(\alpha x) - \alpha x\right) \\
   &=
   T(\alpha)x - T(\alpha x).
   \end{align*}
   Now the result, $f^{-1}(f(x)) = x$, follows soon.
 \end{rmk}

Note in Theorem \ref{thm: simple proof} that if we let $G(x) = x$ and $c = 1$, we obtain
Lemma \ref{lem: simple proof}. We are now ready to prove Theorem \ref{thm: simple proof}. 

\begin{proof}[{\bf Proof of Theorem \ref{thm: simple proof}}]
If $T(\alpha) = 0$, then $F \circ T = F(0)$ implying $F$ is not injective. If $p \mid n$,
then $T \circ F = 0$ implying $F$ is not surjective. As a result, if $F$ permutes $\Fn$, necessarily $T(\alpha) \neq 0$ and $p \nmid n$.
Assume $T(\alpha) \neq 0$ and $p \nmid n$. We have that $f(x) = x^q - x + T(\alpha x)$ from Lemma \ref{lem: simple proof} (b) permutes $\Fn$, and
$f \circ T_\alpha = T(\alpha) n^{-1} T \circ f = T(\alpha) T_\alpha$. 
Moreover, $\im(T_\alpha) = \im( T(\alpha) n^{-1} T) = \F$,
and $T(\alpha) n^{-1} T \circ Q \circ G = 0$, where $Q(x) := x^q - x$ (since $T \circ Q = 0$).
Then letting $\varphi = f$, $\psi = T_\alpha$, $\bar{\psi} =  T(\alpha) n^{-1} T$, $g = Q \circ G$, and $h = c$,  Corollary \ref{zero} implies
that $F$ permutes $\Fn$. It follows that $F$ permutes $\Fn$ if and only if $T(\alpha) \neq 0$ and $p \nmid n$. 
In this case, Corollary \ref{zero} gives
\begin{align*}
F^{-1}(x) 
&= 
c^{-1} f^{-1} \left( x - Q \circ G \circ c^{-1} f^{-1} \circ T(\alpha) n^{-1}T(x)   \right)\\
&= c^{-1} f^{-1} (x) - c^{-1}f^{-1} \circ Q \circ G \circ c^{-1} f^{-1} \circ T(\alpha) n^{-1}T(x).
\end{align*}
From the proof of Lemma \ref{lem: simple proof} (b), we know that
$$
f^{-1}(x) = T(\alpha)^{-1}n^{-1}T(x) + Q^{-1}|_{\ker(T)}\left( x - n^{-1}T(x)  \right),
$$
where $Q^{-1}|_{\ker(T)}$ is a $q$-polynomial inducing the inverse map of $Q|_{\ker(T_\alpha)}$. It follows that
\begin{align*}
f^{-1} \circ T(\alpha) n^{-1}T(x) 
&=
n^{-1}T(x) + Q^{-1}|_{\ker(T)}(0)\\
&= n^{-1}T(x).
\end{align*}
Hence,
$$
F^{-1} = c^{-1} f^{-1}  - c^{-1}f^{-1} \circ Q \circ G \circ c^{-1} n^{-1}T.
$$
Using the fact that $T \circ Q = 0$, we obtain
$$
f^{-1} \circ Q \circ G \circ c^{-1} n^{-1}T =  Q^{-1}|_{\ker(T)} \circ Q \circ G \circ c^{-1} n^{-1}T.
$$
Since for any $x \in \Fn$ we have $x - T(\alpha)^{-1}T(\alpha x) \in \ker(T_\alpha)$ and 
$Q(x - T(\alpha)^{-1}T(\alpha x)) = Q(x) \in \ker(T)$, we get $Q^{-1}|_{\ker(T)}(Q(x)) = x - T(\alpha)^{-1}T(\alpha x)$, for any $x \in \Fn$.
Then substituting $x$ with $G ( c^{-1} n^{-1}T(x))$ we finally obtain
\begin{align*}
F^{-1}(x) 
&= 
c^{-1} f^{-1}(x)  - c^{-1} Q^{-1}|_{\ker(T)} \circ Q \circ G \circ c^{-1} n^{-1}T(x)\\
&= c^{-1} T(\alpha)^{-1} n^{-1} \left(T(x) + B(x)   \right)  -c^{-1} \left( G(c^{-1}n^{-1}T(x)) - T(\alpha)^{-1}T\left(\alpha G(c^{-1}n^{-1}T(x))   \right)   \right)\\
&=
c^{-1} \biggr[ T(\alpha)^{-1} n^{-1} \left(T(x) + B(x)   \right) -  G\left(c^{-1}n^{-1}T(x)\right) +  T(\alpha)^{-1}T\left(\alpha G\left(c^{-1}n^{-1}T(x) \right) \right) \biggr]
\end{align*}
as required.
\end{proof}

\begin{cor}[{\bf Theorem 2.2, \cite{wu_new}}]\label{cor: wu's result}
 Let $n$ be an odd positive integer, and let $a \in \mathbb{F}_{2^n}^*$ such that $T_{2^n|2}(a^{-1}) = 1$. Then 
 $$
 f(x) = x^2 + x + T_{2^n|2}\left(\dfrac{x}{a}\right)
 $$
 is a permutation polynomial over $\mathbb{F}_{2^n}$ with compositional inverse 
 $$
 f^{-1}(x) = T_{2^n|2}(x) + B(x),
 $$
 where the coefficients of $B(x) = \sum_{k=0}^{n-1} b_k x^{2^k} \in \mathbb{F}_{2^n}[x]$ are given by
 \begin{align*}
  b_0 &= a^{-2^2} + a^{-2^4} + \cdots + a^{-2^{n-1}},\\
  b_k &= 
  \begin{cases}
  \left( a^{-2} + a^{-2^3} + \cdots + a^{-2^k}  \right) + \left( a^{-2^{k+1}} + a^{-2^{k+3}} + \cdots + a^{-2^{n-1}}    \right) & \mbox{if } k \mbox{ is odd,}\\
  \left( a^{-2} + a^{-2^3} + \cdots + a^{-2^{k-1}}   \right) + \left( a^{-2^{k+2}} + a^{-2^{k+4}} + \cdots + a^{-2^{n-1}}    \right) & \mbox{if } k \mbox{ is even,}
   \end{cases}
  \end{align*}
$1 \leq k \leq n-1$.
\end{cor}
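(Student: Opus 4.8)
The plan is to recognize this statement as the specialization of Lemma \ref{lem: simple proof} to $q = 2$ and $\alpha = a^{-1}$, and then to simplify the general coefficient formula in characteristic $2$. First I would set $\alpha = a^{-1}$, so that $T(\alpha x) = T_{2^n|2}(x/a)$; since $q = 2$ gives $x^q - x = x^2 + x$ over $\mathbb{F}_{2^n}$, the polynomial $f$ here coincides with the one in Lemma \ref{lem: simple proof}. The hypotheses translate directly: $T(\alpha) = T_{2^n|2}(a^{-1}) = 1 \neq 0$, and $p = 2$ does not divide $n$ precisely because $n$ is odd. Hence Lemma \ref{lem: simple proof}(a) gives that $f$ permutes $\mathbb{F}_{2^n}$, and part (b) yields $f^{-1}(x) = T(\alpha)^{-1} n^{-1}(T(x) + B(x))$ with $b_k = \sum_{j=1}^{n-1} j \alpha^{q^j} - n \sum_{l=k+1}^{n-1} \alpha^{q^l}$.

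Next I would simplify the scalar prefactor: over $\mathbb{F}_{2^n}$ one has $T(\alpha)^{-1} = 1$ since $T(\alpha) = 1$, and $n^{-1} = 1$ since $n$ is odd (so $n \equiv 1 \pmod 2$). Thus $f^{-1}(x) = T(x) + B(x)$, matching the claimed form, and it remains only to verify the explicit piecewise description of the $b_k$. To this end I would reduce the integer coefficients $j$ and $n$ modulo the characteristic $2$: the term $j\alpha^{q^j}$ survives only for odd $j$, and $-n\sum_l$ becomes $+\sum_l$ because $n$ is odd. Writing $\alpha^{q^j} = a^{-2^j}$, this gives
$$
b_k = \sum_{\substack{1 \le j \le n-1 \\ j \text{ odd}}} a^{-2^j} + \sum_{l=k+1}^{n-1} a^{-2^l}.
$$

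Finally I would carry out the cancellation that produces the stated formula. Denote by $O$ the first sum (the odd-exponent terms $a^{-2}, a^{-2^3}, \dots, a^{-2^{n-2}}$, the top index being $n-2$ since $n-1$ is even) and split $O$ into its part with exponents $\le k$ and its part with exponents $> k$. In $O + \sum_{l=k+1}^{n-1} a^{-2^l}$, the odd-exponent terms with index $> k$ in $O$ cancel against the odd-exponent terms of the second sum (which themselves run only up to $n-2$, again because $n-1$ is even), leaving the odd exponents $\le k$ together with the even exponents between $k+1$ and $n-1$. Distinguishing the parity of $k$ then reproduces exactly the two displayed cases: for $k$ odd the surviving even exponents start at $k+1$, while for $k$ even they start at $k+2$; the case $k = 0$ is the special instance where no odd exponent is $\le k$, leaving $b_0 = a^{-2^2} + a^{-2^4} + \cdots + a^{-2^{n-1}}$. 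The only real obstacle here is bookkeeping, namely keeping the parity ranges straight, which is entirely mechanical once one records that $n$ odd forces $n-1$ even.
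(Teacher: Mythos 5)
Your proposal is correct and follows essentially the same route as the paper: specialize Lemma \ref{lem: simple proof} to $q=2$, $\alpha = a^{-1}$ (noting $T(\alpha)=1$ and $n^{-1}=1$ in characteristic $2$ with $n$ odd), then reduce the coefficient formula $b_k = \sum_{j=1}^{n-1} j\alpha^{q^j} - n\sum_{l=k+1}^{n-1}\alpha^{q^l}$ modulo $2$ and sort by parity of the exponents. Your only deviation is cosmetic --- you reduce mod $2$ first and then cancel the odd-index terms above $k$, whereas the paper splits the first sum at $k$ before reducing --- but both orderings land on the identical expression $\sum_{j\leq k,\, j \text{ odd}} a^{-2^j} + \sum_{l\geq k+1,\, l \text{ even}} a^{-2^l}$, from which the three displayed cases follow.
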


\begin{proof}
First note that we can let $G(x) = x$ and $c = 1$ in Theorem \ref{thm: simple proof} in order to obtain Lemma \ref{lem: simple proof}.
This is Lemma \ref{lem: simple proof} with $q = 2$, $n$ odd, 
and $\alpha = a^{-1} \in \Fn$ such that $T(\alpha) = 1$. 
Since $2 \nmid n$ and $T(\alpha) \neq 0$, $f$ is a permutation polynomial by Lemma \ref{lem: simple proof} (a),
whereas the expression for the compositional inverse of $f$ follows from (b). Indeed, in our case of $q = 2$ and $n$ odd, the coefficients of $B$ are given by
\begin{align*}
b_k &:= \sum_{j=1}^{n-1}j \alpha^{q^j} - n \sum_{l = k+1}^{n-1} \alpha^{q^l}
=
\sum_{j=1}^{k} j \alpha^{2^j} + \sum_{l = k+1}^{n-1}(l + 1)\alpha^{2^l}\\
&= 
\sum_{\substack{j=1\\ j \text{ odd}   } }^{k} \alpha^{2^j} + \sum_{\substack{l = k+1\\ l \text{ even}}}^{n-1}\alpha^{2^l},
\end{align*}
where  $0 \leq k \leq n-1$.
Now the reader can check that the result is obtained by substituting $\alpha$ with $a^{-1}$ and computing the three cases of $b_k$. 
\end{proof}

 \section{Conclusion}
 In this paper we extended the decomposition method used in \cite{wu} to find the compositional inverses, and preimages in certain cases, of several more general classes of permutation polynomials over arbitrary finite fields. As a result 
 we showed how the inverses of these classes can be written in terms of the inverses, over subspaces, of two other polynomials, $\bar{f}, \varphi$, 
 where $\varphi$ is linearized.
 In some of these cases one is able to obtain both such inverses, thus obtaining the full explicit result. 
 We also showed that by solving a system of linear equations we can obtain a linearized polynomial inducing the inverse map over subspaces 
 on which a prescribed linearized polynomial induces a bijection.
 In the special case that the Dickson matrix in such a linear equation is a circulant matrix, and the characteristic $p$ of $\Fn$ does not divide $n$, the system may be solved quickly
 by writing said equation as a convolution and then applying an FFT.
 In addition, we gave the compositional inverse of a class of permutation polynomials generalizing that in (2) and (3) of the Introduction, 
 the inverses of which were obtained in \cite{coulter1} and \cite{wu}, respectively.
 We also obtained the explicit compositional inverse of a class of permutation polynomials generalizing that whose inverse was recently
 given in \cite{wu_new}. However, in many of the results we imposed the stronger condition that $\varphi$ bijects the subspace $S_\psi$, which is not a necessary condition
 for a given polynomial $f$ to induce a permutation of $\Fn$. It would thus seem to be of interest to ``ease'' this requirement, or possibly get rid of it altogether by using 
 a certain bijection $\phi : \Fn \rightarrow \psi(\Fn) \oplus \ker(\psi)$, instead of our injective map $\phi_\psi : \Fn \rightarrow \psi(\Fn) \oplus S_\psi$.
 The reason for the preference of $\ker(\psi)$ over $S_\psi$ is that in most of the cases of the permutation $f$, 
 a necessary condition for it to be a permutation is that $\varphi$ is injective on $\ker(\psi)$. 
 
 In summary, the method first used in \cite{wu} and extended here seems quite applicable, 
 and it is expected that it can similarly be applied to several more classes of permutation polynomials. The main drawback seems to be that the inverse results are written
 in terms of the inverses of $\bar{f},\varphi$, over subspaces, which one may not always be able to easily determine explicitly.


\end{document}